\numberwithin{equation}{section}
\numberwithin{figure}{section}
\newtheorem{theorem}{Theorem}
\newtheorem{corollary}{Corollary}[section]
\newtheorem{lemma}{Lemma}[section]
\newtheorem{proposition}{Proposition}[section]
\theoremstyle{definition}
\newtheorem{definition}{Definition}[section]
\DeclareFontFamily{T1}{dutchcal}{}
\DeclareFontShape{T1}{dutchcal}{m}{n}{<->s*[1.44]callig15}{}
\DeclareMathAlphabet\mathdutchcal   {T1}{dutchcal} {m} {n}
\newcommand{\Lbrace}{\left \lbrace}
\newcommand{\Rbrace}{\right \rbrace}
\newcommand*{\E}{\mathbb{E}}
\newcommand*{\N}{\mathbb{N}}
\renewcommand{\P}{\mathbb{P}}
\newcommand*{\R}{\mathbb{R}}
\newcommand*{\Z}{\mathbb{Z}}
\newcommand{\comp}[2]{\mathcal{C}_{#2}^{#1}}
\newcommand{\explor}[2]{\mathcal{Z}_{#2}^{#1}}
\newcommand{\cal}[1]{\mathcal{#1}}
\newcommand{\escball}[1]{T_{B_{#1}}}
\newcommand{\BR}[2]{(B)_{#1}^{#2}}
\newcommand{\hittime}[1]{H_{#1}}
\newcommand{\exittime}[1]{T_{#1}}
\newcommand{\conditionH}{(\mathcal{H})}
\newcommand{\Pq}[1]{P_{#1, \omega}}
\keywords{}
\subjclass[2010]{primary 60K37; secondary 82D30}
\begin{document}

\author{Alejandro F. Ram\'\i rez$^{1,2}$}
\author{Rodrigo Ribeiro$^{1,3}$}

\title{Computable criteria for ballisticity of random walks in elliptic random environment}

\date{\today \\
$^1$ Universidad Cat\'{o}lica de Chile \\
$^2$ NYU-Shanghai \\
$^3$ Colorado University Boulder
}

\begin{abstract}  We consider random walks in i.i.d. elliptic random
  environments which
  are not uniformly elliptic. We introduce a computable condition
  in dimension $d=2$ and a general condition valid for
  dimensions $d\ge 2$ expressed in terms of the exit time from
  a box,
  which ensure that local trapping would not inhibit a ballistic
  behavior
  of the random walk. An important technical innovation related to our
  computable condition, is the introduction of a geometrical point of
  view to classify the way in which the random walk can become
  trapped,
  either in an edge, a wedge or a square.
  Furthermore, we prove that the general condition we introduce is sharp.
\end{abstract}

\maketitle

\section{Introduction}
Finding explicit computable criteria giving information about the long-time behavior of 
a random walk in random i.i.d. environment on $\mathbb Z^d$ for $d\ge 2$ is a challenging 
problem  for which few partial results have been derived. A particular  question 
of this kind, is under what criteria can we ensure that the behavior of the random 
walk is ballistic (i.e. with non-zero velocity). It is natural to expect that for dimensions $d\ge 2$, a criteria which would imply ballisticity should be 
 directional transience. In the uniform elliptic case, 
a family of conditions which correspond to a priori strong forms of directional transience and which do imply ballisticity, 
were introduced in a series of works including \cite{Sz01,Sz02} and \cite{BDR14}. 
These conditions are defined in terms of the velocity of decay of the exit 
probability through the atypical side of the slab: 
condition $(T)$ (exponential decay) and $(T')$ (almost exponential decay), 
both introduced 
by Sznitman in \cite{Sz01,Sz02}, and the polynomial condition $(P)_M$ introduced by Berger, Drewitz 
and Ram\'\i rez in \cite{BDR14}. All of them have been proven 
to be equivalent  (see \cite{BDR14} and \cite{GR20}). Condition 
$(P)_M$ can be verified for some environments (see for example \cite{Sz03,RS19,FR20}), and it is 
of global nature, in the sense that it should be verified in finite, but large boxes. As soon 
as the uniform ellipticity condition is relaxed to just ellipticity, a new kind of phenomena 
appears, where local traps corresponding to just a few edges could inhibit 
ballisticity even if the random walk is directional transient. A local trap here 
means that the walk in average takes an infinite time to escape a 
region including just a few sites. In this case, the ballisticity 
criteria $(T)$, should be complemented with an additional condition which 
inhibits the appearance of local traps produced by strong tails near 
degeneracy of the elliptic environment. 

The first result of this article is the introduction of a computable ellipticity condition 
in dimension $d=2$ (condition $(X)$), in the 
sense that that it can be explicitly checked just 
knowing the law of the environment at a single site.
This condition is expressed in a geometrical way, in terms of the exit time from an edge, and a set of extra conditions related to the exit time from a wedge and from a square, together with a requirement involving correlations between the jump probabilities at a single site.
We show that 
condition $(X)$ together with condition $(P)_M$ imply ballisticity.

A second result presented here is a general second condition (condition $(B)$), valid for $d\ge 2$, 
and expressed in terms of the expected exit time from a large box, 
which also together with $(P)_M$ implies ballisiticity. We also show
that condition $(B)$ is sharp.

\medskip
\subsection{Ballisticity for random walks in elliptic random environments} Let us introduce the random walk in random environment model (RWRE). 
Let $U=\{e\in\mathbb Z^d:|e|_1\}$ and $\mathcal P=\{p(e): e\in U\}$
be the set of probability vectors with components in $U$. We will 
also use the notation $\{e_1,e_{-1},\ldots, e_d,e_{-d}\}$ for the 
elements of $U$, with the convention $e_{-i}=-e_i$, $1\le i\le d$. 
We define the environmental space $\Omega=\mathcal P^{\mathbb Z^d}$
and use the notation $\omega=(\omega(x))_{x\in\mathbb Z^d}\in\Omega$, 
with $\omega(x)=(\omega(x,e))_{e\in U}\in\mathcal P$. 
For $\omega$ fixed, define the Markov chain $(X_n)_{n\ge 0}$
with transition probabilities 

$$
P_\omega(X_{n+1}=y+e|X_n=y)=\omega(y,e)\quad{\rm for}\ {\rm all}\ y\in 
\mathbb Z^d, e\in U. 
$$
We will call  this Markov chain a random walk in the environment $\omega$
and denote by $P_{x,\omega}$ its law starting from $x$. Whenever $\omega$
is chosen according to some probability measure $\mathbb P$ defined on the 
environmental space $\Omega$, we call $P_{x,\omega}$ the {\it quenched}
law of the RWRE starting from $x$. Similarly we call the semidirect product $P_{x}$ defined by 
$P_x(A\times B)=\int_AP_{x,\omega}(B)d\mathbb P$, the {\it averaged}
or {\it annealed} law of the RWRE starting from $x$. 
Throughout this article we will assume that $(\omega(x))_{x\in\mathbb Z^d}$
are i.i.d. under $\mathbb P$. We say  that  $\mathbb P$ is 
{\it uniformly elliptic} if there is a constant $\kappa>0$ such that 
$\mathbb P$-a.s. we have that 

$$
\omega(x,e)\ge\kappa 
\ {\rm for}\ {\rm all}\ x\in\mathbb Z^d, e\in U, 
$$
while we say that $\mathbb P$ is 
{\it elliptic}  if $\mathbb P$-a.s. we have that 

$$
\omega(x,e)>0\ {\rm for}\ {\rm all}\ x\in\mathbb Z^d, e\in U. 
$$

One of the mayor questions about the RWRE model is the relation between 
directional transience and ballisticity. Given a direction $l\in\mathbb S^{d-1}$, 
we say that the random walk is {\it transient in direction} $l$ if a.s. we have that 

$$
\lim_{n\to\infty}X_n\cdot l=\infty. 
$$
We say that the random walk is {\it ballistic in direction} $l$
if a.s. 

$$
\liminf_{n\to\infty}\frac{X_n\cdot l}{n}>0. 
$$
It is known that for $\mathbb P$ i.i.d. and  elliptic, 
ballisiticty in a given direction implies a law of 
large numbers 

$$
\lim_{n\to\infty}\frac{X_n}{n}=v\ a.s., 
$$
with $v\ne 0$. In dimension $d=1$ it is known that for $\mathbb P$
i.i.d. and uniformly elliptic, directional 
transience does not imply ballisticity (see for example Sznitman \cite{Sz04}). 
The one-dimensional directional transient examples which are not ballistic 
are produced by laws of the environment which favor the presence of 
large (global) traps which slowdown the movement of the walk and whose 
size increases as time $n\to\infty$. On the other hand, it is expected that 
the cost of such traps in dimensions $d\ge 2$ would be too high to produce 
these examples, so that for $\mathbb P$ i.i.d. and uniformly elliptic, 
directional transience would imply ballisticity. This is still an open question. 

As a way to tackle this problem, some intermediate 
conditions which interpolate between directional transience and 
ballisticity have been introduced. For $l\in\mathbb Z^d$, 
and $\gamma\in (0,1]$, we say that condition $(T)_\gamma|l$ is 
satisfied if there is an open set $O\subset\mathbb S^{d-1}$ such that 

$$
\frac{1}{L^\gamma}\lim_{L\to\infty}P_0(X_{T_{U_{L,l'}}}\cdot l'<0)<0, 
$$
where $U_{L,l'}=\{x\in\mathbb Z^d:-L\le x\cdot l'\le x\}$ and $T_{U_{L,l'}}=
\min\{n\ge 0:X_n\notin U_{L,l'}\}$. The case $\gamma=1$ is 
called condition $(T)|l:=(T)_1|l$. While we define condition $(T')|l$
as the fulfillment of $(T)_\gamma|l$ for all $\gamma\in (0,1)$. 
These condition where introduced by Sznitman in \cite{Sz02,Sz03}. 
On the other hand, given $M\ge 1$, we say that the polynomial 
condition $(P)_M|l$ is satisfied if there is an $L_0$ such that 

$$
P_0(X_{T_{U_{L,l'}}}\cdot l'<0)\le\frac{1}{L^M}\ {\rm for}\ {\rm all}\ L\ge L_0. 
$$
This condition was defined in \cite{BDR14}. In the case of uniformly elliptic 
environments it was shown in \cite{Sz02}, \cite{BDR14} and \cite{GR20}, 
that conditions $(T)_\gamma$ for some $\gamma\in (0,1)$, $(T)$, $(T')$
and $(P)_M$ for $M\ge 15d+5$, are equivalent, and that they 
imply ballistic behavior together with an annealed and quenched central 
limit theorem. 
To extend these results to random walks in elliptic (but not necessarily 
uniformly elliptic) environments,  a minimal condition integrability condition 
has to be assumed. This can be described as a good enough 
behavior of the jump probabilities near $0$. 
We say that condition $(E)_0$
is satisfied if for all $e\in U$ there exist $\eta(e)>0$ such that 
	\begin{equation}\label{def:etastar}
	\E\left[\omega(0,e)^{-\eta(e)}\right] <\infty. 
	\end{equation}
Under $(E)_0$, the equivalence between $(T)_\gamma$
for $\gamma\in (0,1)$, 
$(T')$ and $(P_M)$ for $M\ge 15d+5$, was proven in \cite{CR14}. 
Let $\beta > 0$. 
We say that the law of the environment satisfies the ellipticity condition 
$(E)_\beta$ if there exists an $\{\alpha(e): e \in U\} \in (0,\infty)^{2d}$ such that 

$$
2\sum_{e'}\alpha(e')-\max_{e\in U}(\alpha(e)+\alpha(-e))>\beta 
$$
and for every $e\in U$

$$
\mathbb E\left[\prod_e\omega(0,e)^{-\alpha(e)}\right]<\infty. 
$$
In \cite{CR14} and \cite{BRS16} it was proved that whenever $d\ge 2$ and 
condition $(P)_M$ for $M\ge 15d+5$ together with $(E')_1$ are satisfied, then 
the random walk is ballistic.  Condition $(E')_1$ of this result is 
a sharp ellipticity condition for ballisticity for random walks in Dirichlet environments 
\cite{ST11,ST17}. It can actually be shown that whenever the law of the jump 
probabilities at a single site is asymptotically independent at small values, 
it is also a sharp condition. Nevertheless, as explained in \cite{FK16}, condition 
$(E')_1$ is not a sharp condition in general. There, the authors present 
a condition expressed in terms if the exit time from a hypercube of the random 
walk. We define a hypercube located at $x\in\mathbb Z^d$ as 

$$
H_x:=\left\{x+\sum_{i=1}^d \epsilon_i e_i,\ \epsilon_i=0\ {\rm or}\ 1\
  {\rm for}\ {\rm all}\ 1\le i\le d\right\}. 
$$
We say that condition $(C)_1$ is satisfied if 

$$
\max_{{y}\in H_x}E_{y}\left[ T_{H_x}\right]<\infty. 
$$
It was shown in \cite{FK16} that if $(C)_1$ is not satisfied the random walk 
is not ballistic. The following is an open question: 

\medskip 

\noindent Does $(C)_1$ together with $(P)_M$ for $M\ge 15d+5$ imply 
ballisiticity? 

\medskip 

An ellipticity condition which is more general that $(E')_1$, denoted 
by $(K)_1$, was defined in \cite{FK16}, where they showed that 
$(K)_1$ together with $(P)_M$ for $M\ge 15d+5$ implies ballisticity. 
In this article we will introduce a computable ellipticity condition 
in dimension $d=2$ and a simple general dimension condition, similar 
in spirit to condition $(C)_1$, 
which also imply ballistic behavior. 
\medskip

\subsection{Main results}
\label{sec:notation}
Our main result will be stated for random walks in dimensions $d=2$, providing 
a computable criteria for ballistic behavior. To state it we will 
 introduce a condition which quantifies the singularities at a site involving two or three directions 
simultaneously.  In order to preserve the visual appeal in some arguments we will make use of diagrams instead of letters $i$ and $j$ to denote those directions. For instance, diagram $\perp$ represents directions~$e_{-1}, e_1$ and $e_2$. Under such convention we define 
\begin{equation}
Q_{\llcorner}:= \max\{\omega(0, e_{1}),\omega(0, e_
2)\}, \quad Q_{\perp}:= \max\{\omega(0, e_{1}), \omega(0, e_{-1}), \omega(0, e_
2)\}
\end{equation}
and define similar quantities for all the corresponding multiple of $90$ degree rotations. We also use the following type of shorthand notation 
$$
\left\{ \max\{\vdash\right\} = \downarrow\} = \left\{\arg \max_{i \in \{-2,1,2\}}\omega(0,e_i) = e_{-2}\right\}. 
$$
Our condition requires negative moments of the above defined set of random variables and is stated as follows. We say that an i.i.d. law $\mathbb P$
  on $\Omega$ in dimension $d=2$ satisfies {\it condition $(X)_a$} if there exist $\alpha_{\lrcorner}, \beta_{\vdash}$ (and all multiples of $90$ degree rotations of $\lrcorner$ and $\vdash$), such that 
	\begin{equation}\label{x:eq1}
	\int_{\Omega} Q^{-\beta_{\vdash}}_{\vdash} {\rm d}\mathbb{P} < \infty; \quad \int_{\Omega} Q^{-\alpha_{\lrcorner}}_{\lrcorner} {\rm d}\mathbb{P} < \infty, 
	\end{equation}
	for all multiples of $90$ degree rotations of $\lrcorner$ and $\vdash$, 
	\begin{equation}\label{x:eq2}
	\int_{\Omega} Q^{-\beta_{\perp}}_{\vdash} Q^{-\alpha_{\stackrel{}{\ulcorner}}}_{\stackrel{}{\ulcorner}} \mathbb{1}_{\{ \max\{\vdash\} = \uparrow\}} {\rm d}\mathbb{P}< \infty; \; \int_{\Omega} Q^{-\beta_{\top}}_{\vdash} Q^{-\alpha_{\stackrel{}{\llcorner}}}_{\stackrel{}{\llcorner}} \mathbb{1}_{\{ \max\{\vdash\} = \downarrow\}} {\rm d}\mathbb{P}< \infty; \; \int_{\Omega} Q^{-\alpha_{\stackrel{}{\lrcorner}}}_{\stackrel{}{\lrcorner}}Q^{-\alpha_{\stackrel{}{\llcorner}}}_{\stackrel{}{\llcorner}}  {\rm d}\mathbb{P}< \infty, 
	\end{equation}
	for all multiples of $90$ degree rotations of $\stackrel{}{\ulcorner}, \vdash$ and $\uparrow$. Additionally, we  require that 
	\begin{gather}
		\beta_{\dashv} + \beta_{\vdash} > a\label{eq:edgerel} \\
		\alpha_{\stackrel{}{\ulcorner}} + \beta_{\perp} + \beta_{\dashv} > a\label{eq:wedgerel}\\
		\alpha_{\stackrel{}{\ulcorner}}	+ \alpha_{\stackrel{}{\urcorner}} + \alpha_{\lrcorner}+ \alpha_{\llcorner}> a\label{eq:squarerel}
	\end{gather}
	including all the multiple of $90$ degree rotations of
        \eqref{eq:edgerel} and \eqref{eq:wedgerel}.

We can now state the main result of this article. 
\medskip 
\begin{theorem}\label{thm:X} Consider an RWRE in $\Z^2$ whose environment satisfies conditions $(E)_0, (X)_1$ and $(P)^{\ell}_M$ for some $M>35$ and $\ell \in S^{1}$. Then, the walk is ballistic in direction $\ell$, that is $P_0$-almost surely 
	$$
	\lim_{n\to \infty}\frac{X_n}{n} = v, \; \text{ where }v\cdot\ell >0. 
	$$

      \end{theorem}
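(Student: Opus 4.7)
The plan is to invoke the Sznitman--Zerner regeneration framework along direction $\ell$ and reduce ballisticity to the annealed integrability of the first regeneration time $\tau_1$. Since $(E)_0$ together with $(P)^\ell_M$ for $M>35=15\cdot 2+5$ already implies $(T')$ in direction $\ell$ by \cite{CR14}, the regeneration times $\tau_k$ are almost surely finite and the walk is directionally transient in $\ell$. By the standard renewal argument the ballistic limit with $v\cdot\ell>0$ is equivalent to $E_0[\tau_1]<\infty$, so the whole task reduces to controlling the annealed expected time spent inside a single regeneration block.

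I would then decompose $\tau_1$ as a sum of sojourn times at the sites visited during the block. Using $(P)^\ell_M$ the number of such sites is polynomially controlled with polynomial tails (the threshold $M>35$ is exactly what is needed to convert the polynomial exit estimates into effectively stretched-exponential control in the merely elliptic setting, following the scheme of \cite{BRS16}). The contribution of the atypical long-trajectory event is absorbed by this decay, and on the typical event the remaining task is to bound the annealed expected sojourn time at each visited site and take a union bound over the polynomially many potential trap locations.

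The site-by-site bound is where condition $(X)_1$ and the geometric novelty of the paper enter. In $\mathbb Z^2$, I would classify the local environments that can inflate the sojourn time into three canonical trap types according to which subsets of jump probabilities at a vertex and its neighbours are simultaneously small: an \emph{edge} trap, involving two adjacent sites and controlled by the exponents $\beta_\vdash,\beta_\dashv$; a \emph{wedge} trap, involving three sites in an L--shape and controlled by a mixed $(\alpha,\beta)$ product; and a \emph{square} trap, involving the four corners of a unit $2\times 2$ box and controlled by a product of four $\alpha$'s. For each type the reciprocal of the one-step exit probability is bounded above by a product of the form appearing in \eqref{x:eq1}--\eqref{x:eq2}, and a H\"older-type inequality using the arithmetic relations \eqref{eq:edgerel}, \eqref{eq:wedgerel}, \eqref{eq:squarerel} with $a=1$ delivers finite annealed expected escape time from each canonical trap. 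The indicator decorations in \eqref{x:eq2} are what select the configurations where three simultaneously small jump probabilities genuinely form a wedge rather than reducing to an edge, preventing double counting across trap types and ensuring the correct correlations at a single site are captured.

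The main obstacle will be proving that the edge/wedge/square trichotomy is exhaustive, i.e.\ that no local configuration of near-degenerate jump probabilities outside these three types can inflate the annealed sojourn time. A careful combinatorial case analysis on which of the four jump probabilities at a vertex (and its relevant neighbours) can be small simultaneously should reduce any a priori more delicate picture to one of the three canonical traps, with the $\arg\max$ indicator apparatus in $(X)_1$ precisely telling which exponent relation applies in each case. A secondary delicate point is the interaction between two nearby near-degenerate sites: one must show that, up to absorbing constants, neighbouring traps can always be enclosed in a single edge, wedge or square. Once these inputs give $E_0[\tau_1]<\infty$, the law of large numbers for i.i.d.\ regeneration blocks yields $X_n/n\to v$ with $v\cdot\ell>0$, completing the proof.
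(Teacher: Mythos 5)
Your high-level skeleton is right: $(E)_0$ plus $(P)^\ell_M$ with $M > 35 = 15d+5$ gives directional transience and a regeneration structure, and the problem reduces to integrability of $\tau_1$. But the technical route you propose — decompose $\tau_1$ into per-site sojourn times, bound each by a H\"older inequality over a catalogue of edge/wedge/square traps, and union-bound over polynomially many sites — is not what the paper does, and it runs into genuine obstructions that the paper's machinery was built specifically to avoid.

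The paper instead proves that $(X)_a$ implies the moment condition $\BR{a}{\eta_*}$: namely $E_0[T_{B_R}^{a+c}]<\infty$ for a fixed radius $R$ (here one can in fact take $R$ small). This feeds into the general Theorem \ref{thm:BR}, which in turn routes through an \emph{attainability estimate} $(A)_a$ — a bound on the probability that the quenched chance of reaching $\partial B_{\delta\log u}$ from $0$ before returning is atypically small — and then invokes the Fribergh--Kious framework (Theorem \ref{thm:framework}) to get $P_0(\tau_1>u)\le u^{-(1+\delta)}$. The step from $(X)_a$ to a moment bound on $T_{B_R}$ is precisely where the paper's innovation lies: one runs two \emph{exploration processes} from $0$ and from its highest-probability neighbour $0'$, builds a random subgraph $G_R\subset B_R$ whose edges are given capacities $\beta_j$ or $\alpha_{ij}$ according to the instruction under which each edge was revealed, and applies the max-flow min-cut theorem to produce a random unit flow $\theta$ from $\{0,0'\}$ to $\partial B_R$ with $\|\theta\|\ge a+\varepsilon$ and $\theta(e)\le c(e)$. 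Lemma \ref{lemma:flowpath} then converts the existence of this flow into a bound on the annealed probability that the quenched escape probability from $B_R$ is small, and Lemma \ref{lemma:expecfinite} uses \eqref{x:eq1} and \eqref{x:eq2} to control $\mathbb E[\prod_e \omega(e)^{-c(e)}]$. None of this combinatorial-flow apparatus appears in your proposal.

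The more substantive gap in your proposal concerns \eqref{x:eq2}. You describe the indicator decorations as ``preventing double counting across trap types.'' That is not their role, and in fact the paper's Section 2 is devoted to showing that the naive ``corner H\"older inequality'' reasoning you outline \emph{fails}: Proposition \ref{prop:trapwedge} exhibits an environment where \eqref{x:eq1} and all arithmetic relations \eqref{eq:edgerel}--\eqref{eq:squarerel} hold, yet $E_0[T_W]=\infty$ for a wedge $W$, precisely because the jump probabilities at a single vertex are correlated in a way that the corner-by-corner singularity count cannot see. The joint-moment integrals in \eqref{x:eq2}, with their $\arg\max$ indicators, are what control this single-site correlation; they are not a bookkeeping device. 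So a proof built on ``bound the annealed sojourn time at each trap type via H\"older, using only the sums of singularities'' will reproduce exactly the false intuition the paper is constructed to refute. Relatedly, the ``main obstacle'' you flag — exhaustiveness of the trichotomy — is not where the difficulty lies; the genuine obstacles are (i) showing that the cutsets in $G_R$ always have capacity at least $a+\varepsilon$, which is how \eqref{eq:edgerel}--\eqref{eq:squarerel} enter, and (ii) showing the relevant product expectation is finite, which is where \eqref{x:eq2} enters.

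Finally, the direct sojourn-time decomposition of $\tau_1$ has its own unaddressed difficulty: the sojourn times at nearby sites are strongly dependent through the shared environment and through the trajectory, so a union bound over ``polynomially many potential trap locations'' does not straightforwardly yield the needed tail $P_0(\tau_1>u)\le u^{-(1+\delta)}$. The attainability approach circumvents this by working at the level of the quenched escape probability from a fixed small ball and then stitching good hyperplanes together; that is the argument your proposal would need to reproduce.
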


      \medskip 

 Conditions $(E)_0$ and $(X)_1$ depend only on 
      the distribution of a single site. Moreover they are computable 
      in the sense that given the distribution on a fixed site, 
      verifying such conditions is a matter of computing integrals of 
      positive functions over $\R$.
Also, condition $(X)_1$ has a geometrical interpretation.
      When $a=1$, (\ref{eq:edgerel}) 
        together with the first requirement of (\ref{x:eq1}), implies 
        that 
        the random walk cannot become trapped on any edge
        (see Figure 1.2).
		\begin{figure}[H]\label{fig:edgecase}
			\includegraphics[scale=0.4]{./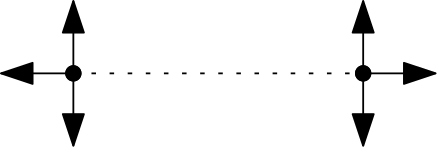}
			\caption{Schematic of an edge and the transitions pointing out it.}
		\end{figure}
		On the other hand, intuitively we would expect that 
        (\ref{eq:wedgerel}) 
        together with (\ref{x:eq1}) imply that it cannot become 
        trapped in any wedge (see Figure \ref{fig:wedgeblock}); while (\ref{eq:squarerel}) with the 
        second requirement in (\ref{x:eq1}) would imply that it cannot 
        become trapped in a square (see Figure \ref{fig:wedgeblock}).
		\begin{figure}[H]\label{fig:wedgeblock}
			\begin{center}
		\scalebox{1}{
			\begin{tabular}{cc}
				
				\includegraphics[scale=0.3]{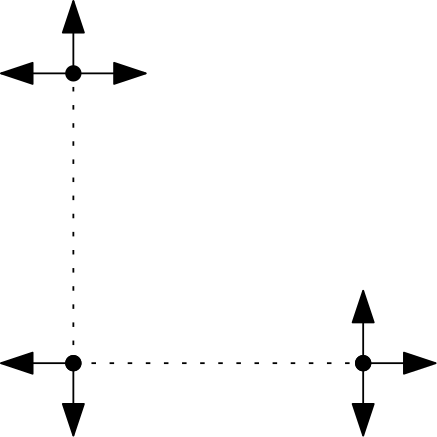}
				&
				\includegraphics[scale=0.3]{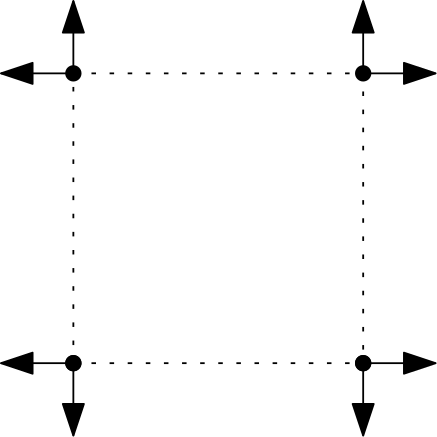}
				\\
				\multicolumn{1}{c}{ } & \multicolumn{1}{c}{} \\
				\multicolumn{1}{c}{\small Transitions point out a wedge.}
				&
				\multicolumn{1}{c}{\small Transitions pointing out a square.}
			\end{tabular}
			  }
			  \caption{\ }
		\end{center}
		\end{figure}
		Nevertheless, an important 
        observation of this article, which will be shown in Section 2, 
        is that it is possible to construct environments for which  (\ref{x:eq1}), 
        (\ref{eq:edgerel}), 
        (\ref{eq:wedgerel}) and (\ref{eq:squarerel}) are satisfied, 
        but nevertheless the random walk is not trapped in some 
        wedges or squares. This is due to the behavior of the 
        correlations 
        of the jump probabilities at a single site. For this reason, 
        we require in our definition of condition $(X)_a$ also 
        (\ref{x:eq2}).
      In Section \ref{sec:localtrapping} we will discuss
        in more  detail condition $(X)_1$ and explain the connection
        between relations \eqref{x:eq1}-\eqref{eq:squarerel}. The
        proof
        of Theorem \ref{thm:X} is based on this geometrical interpretation
        of our conditions, through the use of the theory of flow networks.

      Our second result  valid in any dimension $d\ge 2$
      ensures ballisticity under the requirement that certain moments 
      of the exit time from a box are finite.
      For any $x\in\mathbb Z^d$, we will use the standard notation
      for the norm $|x|_1=|x_1|+\cdots+|x_d|$, $|x|_2=(|x_1|^2+\cdots+|x_d|^2)^{1/2}$ and
      $|x|_\infty=\max\{|x_i|:1\le i\le d\}$
      For any $R\ge 0$,
      we define the box 

      $$
B_R=\{x\in\mathbb Z^d: |x|_\infty\le R\}.
      $$
 Let $b>0$ and $a>0$. We say a law $\mathbb P$
        on the environmental space satisfies {\it
          condition~$\BR{a}{b}$} if
        there exist a pair of numbers $R\ge 0$ and $c>0$ such that
	\begin{equation}
          \label{eq:moment}
	E_0 T_{B_R}^{a+c} < \infty, 
	\end{equation}
	and
	\begin{equation}\label{eq:randc}
    R> \frac{a(a+c)}{b\cdot c} -2.     
  \end{equation}

      \medskip 

      Singularities involving two orthogonal directions will play an important role throughout the article. Define 
\begin{equation}\label{def:eta*}
\eta_* := \max_{ i,j  \in \{1,\dots, 2d\} \text{ and } e_i \perp e_j} \{ \eta_i\wedge \eta_j\}. 
\end{equation}
To see why $\eta_{*}$  plays an important role in questions, write the singularities  in decreasing order $\eta_{j_1} \ge \eta_{j_2} \ge \eta_{j_3} \ge \dots \eta_{j_{2d}}$. And consider a situation in which we have $j_2 = -j_1$. Heuristically, it is the value of $\eta_* = \eta_{j_3}$ that would tell us to what extent the random walk is one dimensional:  $\eta_{j_3}$ close to zero means the transition probabilities are concentrated on $j_1$ and $-j_1$, 

\medskip 

\begin{theorem} \label{thm:BR}
  Consider an RWRE in an environment satisfying conditions $(E)_0$ and $(P)^{\ell}_M$ for some $M>15d+5$ and $\ell \in S^{d-1}$. If additionally the environment also satisfies $\BR{1}{\eta_*}$ then, the walk is ballistic in direction $\ell$. That is, $P_0$-almost surely, 
	$$
	\lim_{n\to \infty}\frac{X_n}{n} = v, \; \text{ where }v\cdot\ell >0, 
	$$
\end{theorem}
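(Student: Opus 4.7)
The plan is to invoke Sznitman's regeneration-time machinery and reduce ballisticity to the finiteness of the annealed regeneration time. Since $(E)_0$ together with $(P)^\ell_M$ for $M > 15d+5$ implies condition $(T')|\ell$ by the equivalence results of Campos-Ram\'\i rez \cite{CR14}, the directional regeneration times $\tau_1 < \tau_2 < \cdots$ exist, the increments $X_{\tau_{k+1}}-X_{\tau_k}$ are i.i.d.\ under $P_0$, and $E_0[X_{\tau_1}\cdot \ell] < \infty$. By the standard renewal argument, ballisticity in direction $\ell$ follows once one proves
$$E_0[\tau_1] < \infty.$$

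To establish this, I would decompose $\tau_1$ along a cover of the trajectory $X_0,\ldots,X_{\tau_1}$ by translates $B_R+z$ of the box appearing in \eqref{eq:moment}. Writing $N$ for the number of translates visited before regeneration and $T^{(k)}$ for the total time spent in the $k$-th such translate, one has $\tau_1 \le \sum_{k=1}^N T^{(k)}$, and H\"older's inequality with exponents $(1+c,(1+c)/c)$ gives
$$E_0[\tau_1] \le \sum_{k\ge 1} \bigl(E_0[(T^{(k)})^{1+c}]\bigr)^{1/(1+c)}\, P_0(N\ge k)^{c/(1+c)}.$$
The condition \eqref{eq:moment} bounds the first factor uniformly in $k$ once one exploits the i.i.d.\ structure of $\mathbb P$, while $(P)^\ell_M$ (and the $(T')$-type regeneration tail it produces) yields a tail of the form $P_0(N\ge k)\le Ck^{-M/d}$ for the number of visited translates. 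The inequality \eqref{eq:randc}, which with $a=1$ and $b=\eta_*$ amounts to $(R+2)\eta_* c > 1+c$, is precisely the condition making the resulting series summable, and would deliver $E_0[\tau_1]<\infty$.

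The main obstacle is that along the trajectory the translates $B_R+z_k$ do not carry fresh environments in the annealed sense: the conditioning induced by the walk's history biases the law of the environment inside each box. To overcome this I would combine a stopped-$\sigma$-algebra decomposition separating newly entered boxes (whose environment is still distributed as $\mathbb P$) from revisited ones, and absorb the Radon--Nikodym cost of the latter using $(E)_0$ singular moments. The exponent $\eta_*$ enters precisely in the worst-case contribution of a trapping configuration inside a box: such a trap forces the walk to escape via a direction transversal to the axis of concentration, and the integrability of the two-orthogonal-direction minimum $\eta_i\wedge\eta_j$ is what propagates single-site moment control to the $(2R+1)^d$-site box-level bound in \eqref{eq:moment}. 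With this piece in place, the H\"older estimate above closes, \eqref{eq:randc} yields summability, and ballisticity follows.
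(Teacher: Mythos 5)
Your proposal takes a genuinely different route from the paper's, and unfortunately it contains gaps that I think are fatal, or at least would require substantial new ideas not sketched here.

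The paper does \emph{not} try to estimate $E_0[\tau_1]$ directly by covering the trajectory with boxes. Instead it works through the attainability framework: Lemma \ref{lemma:mineta} shows that $(E)_0$ gives a hyperplane-crossing condition $\conditionH$ with exponents at least $\eta_*$, and Proposition \ref{prop:BR} combines $\conditionH$ with $\BR{a}{b}$ to prove that the quenched probability of the event $\{0\rightarrow\partial B_{\delta\log u}\}$ is rarely smaller than $u^{-1}$. The geometric picture is: find a ``good'' hyperplane slice inside $B_{R+1}$, reach it using the box-escape estimate from Lemma \ref{lemma:aux} (which is where $\BR{a}{b}$ is used, via moments of $N_{B_R}(0)$), and then ride that hyperplane out to distance $\delta\log u$ (which is where $\eta_*$ is used). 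Condition \eqref{eq:randc} arises because one has $R+2$ independent chances of finding a good hyperplane in each coordinate direction, and the resulting probability bound must overcome the $u^{-(a+\epsilon)}$ target. That attainability estimate then feeds into the Fribergh--Kious machinery (Theorem \ref{thm:framework}) to get $P_0(\tau_1>u)\le u^{-(1+\delta)}$.

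Your proposal has three concrete problems. First, the claimed role of \eqref{eq:randc}: in your H\"older decomposition $E_0[\tau_1]\le\sum_k E_0[(T^{(k)})^{1+c}]^{1/(1+c)}P_0(N\ge k)^{c/(1+c)}$, summability of the series is governed by the exponent you get for $P_0(N\ge k)$, which you say is $k^{-M/d}$; so the convergence criterion would be a condition on $M$, $c$ and $d$, and it is not at all clear how $(R+2)\eta_*c>1+c$ enters. You have not shown why \eqref{eq:randc} is the right inequality to close your argument, and the arithmetic of your own sketch points toward a different condition. Second, the environment-bias problem you flag at the end is not a technicality; the $k$-th box visited by the walk is selected by the trajectory, and its environment under $P_0$ conditioned on the past is not distributed as $\mathbb{P}$, nor does $\eqref{eq:moment}$ control the total time spent in a box that is re-entered many times. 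Your proposed fix (stopped $\sigma$-algebra decomposition, absorbing Radon--Nikodym costs with $(E)_0$) is gestured at, not carried out, and it is precisely to sidestep this obstacle that the paper restricts to escape-probability estimates, for which the relevant environment in $B_R$ really is an unconditioned piece of $\mathbb{P}$. Third, the role you assign to $\eta_*$ --- ``propagating single-site moment control to the $(2R+1)^d$-site box-level bound in \eqref{eq:moment}'' --- is confused, since \eqref{eq:moment} is a hypothesis of the theorem and is not derived from $(E)_0$; in the actual proof $\eta_*$ enters only through the probability of crossing a coordinate hyperplane, which is a different mechanism entirely.

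So while the strategy ``show $E_0[\tau_1]<\infty$'' is of course the right reduction, your particular route to it does not match the hypotheses of the theorem: it is not clear why $\BR{1}{\eta_*}$ (as opposed to some other combination of moment and radius conditions) is what would make the H\"older series converge, and the central difficulty of biased environments in trajectory-selected boxes is left open.
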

\medskip 

Roughly speaking, the above formal statement can be read in the following way:  under conditions $(E)_0$ and $(P)_M^{\ell}$, if either
the walk escapes a small ball really fast  (which corresponds to a
small $R$ and  a large moment for $T_{B_R}$), or the walk escapes in a
finite mean time a ball with large radius, we do have ballistic behavior. How large the radius has to
be, is determined by the  inequality \eqref{eq:randc}.

Moreover, up to arbitrarily small $\varepsilon$, condition $\BR{1}{\eta_*}$ is sharp. Indeed, for any given positive~$\varepsilon$, by taking $R$ large enough, condition $\BR{1}{\eta_*}$ becomes 
$$
E_0[T^{1+\varepsilon}_{B_R} ]< \infty. 
$$
And this can be contrasted with the proposition below  which gives zero speed behavior under~$E_0[T_{B_R}] = \infty.$

\medskip 

\begin{proposition}\label{prop:zerospeed} Consider an RWRE in an i.i.d elliptic environment. Also assume the walk is transient in direction $\ell$, for some $\ell \in \mathbb{S}^{d-1}$. Then, if  for some radius $R$ 
	$$
	E_0[ T_{B_R}] = \infty 
	$$
	the walk has zero speed. 
\end{proposition}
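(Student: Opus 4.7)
The natural approach is via the contrapositive: assume the walk has positive speed in direction $\ell$ and deduce $E_0[T_{B_R}]<\infty$ for every $R$, contradicting the hypothesis. Under the i.i.d.\ elliptic environment and transience in direction~$\ell$, one can construct $\ell$-regeneration times $\tau_1<\tau_2<\cdots$ in the spirit of Sznitman--Zerner, a.s.\ finite, such that the increments $(\tau_{k+1}-\tau_k,\,X_{\tau_{k+1}}-X_{\tau_k})_{k\ge 1}$ are i.i.d.\ under $P_0$ and $X_{\tau_k}\cdot\ell$ is strictly increasing. Zerner's renewal LLN then yields
\[
\lim_{n\to\infty}\frac{X_n}{n}\;=\;\frac{E_0[X_{\tau_2}-X_{\tau_1}]}{E_0[\tau_2-\tau_1]},
\]
with the convention that the right-hand side is the zero vector whenever the denominator is infinite. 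Since the numerator is always finite, strict positivity of the speed in direction~$\ell$ forces $E_0[\tau_2-\tau_1]<\infty$.

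Next, because each regeneration strictly advances $X\cdot\ell$, there is a constant $c=c(\ell)>0$ with $X_{\tau_k}\cdot\ell\ge c k$, and so $X_{\tau_k}\notin B_R$ once $ck>R$; in particular $T_{B_R}\le\tau_{C_R}$ for some deterministic $C_R=O(R)$. Consequently,
\[
E_0[T_{B_R}]\;\le\;E_0\bigl[\tau_{C_R}\bigr]\;=\;E_0[\tau_1]+(C_R-1)\,E_0[\tau_2-\tau_1].
\]

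The main obstacle is then to show $E_0[\tau_1]<\infty$, since the first regeneration time has a different distribution from the subsequent i.i.d.\ increments. I would handle this by decomposing the trajectory before $\tau_1$ into excursions from the hyperplane $\{x\cdot\ell=0\}$: ellipticity together with transience gives a geometric-type bound on the number of such excursions, while the terminal (non-returning) excursion contributes exactly the law of $\tau_2-\tau_1$ conditioned on the positive-probability no-backtracking event---an event on which the first regeneration and a generic regeneration increment coincide in distribution. Combined with $E_0[\tau_2-\tau_1]<\infty$, this produces $E_0[\tau_1]<\infty$, yielding $E_0[T_{B_R}]<\infty$ and contradicting the assumption.
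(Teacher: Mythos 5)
Your proposal takes the contrapositive route, which is a genuinely different strategy from the paper's. The paper works directly: assuming $E_0[T_{B_R}]=\infty$, it constructs an explicit scenario (route the walk out along $\mathcal{L}$, around the shell $S_{R+1}(3Re_1)$, into a translated copy of $B_R$ where it is trapped for time $>u$, back out and onward to a fresh regeneration) to prove the quenched comparison $P_0(\tau_1>u\mid D=\infty)\ge c\,P_0(T_{B_R}>u)$. This gives $E_0[\tau_1\mid D=\infty]=E_0[\tau_2-\tau_1]=\infty$, and then Zerner's renewal LLN forces $v=0$ without any control on the unconditioned $\tau_1$.

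Your version has a genuine gap at exactly the place you flagged: you need $E_0[\tau_1]<\infty$ to conclude from $T_{B_R}\le\tau_{C_R}$, but positive speed only yields $E_0[\tau_2-\tau_1]<\infty$, i.e.\ $E_0[\tau_1\mid D=\infty]<\infty$. The unconditioned $\tau_1$ additionally accumulates the durations of all the failed regeneration attempts, and in a merely elliptic (not uniformly elliptic) environment these failed excursions are precisely where the heavy tails live; there is no a priori reason they should have finite mean. Your proposed fix via ``excursions from the hyperplane $\{x\cdot\ell=0\}$'' does not close this: successive excursions from a fixed hyperplane are \emph{not} independent under $P_0$ because they re-use the same environment near the origin, so the geometric count of excursions cannot be combined multiplicatively with a per-excursion moment bound. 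The whole point of the regeneration construction is to manufacture i.i.d.\ blocks, and that structure is only available from $\tau_1$ onward; it cannot be bootstrapped to control $\tau_1$ itself. In effect you would need precisely the quantitative trap-to-regeneration comparison that the paper proves, so the contrapositive does not circumvent the construction --- it just relocates the difficulty to a quantity ($E_0[\tau_1]$) that is even harder to control than $E_0[\tau_2-\tau_1]$.
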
 

\medskip

A useful corollary of Theorem 
      \ref{thm:BR}
      is the following. 

      \medskip 

      \begin{corollary}
        \label{corocor}
        Consider an RWRE satisfying $(E)_0$ and 
        $(P)_m|l$ for some 
        $M >15d+5$ and $l\in\mathbb S^{d-1}$. Assume that
        $\eta_*>1/2$. Then the walk is ballistic.
        \end{corollary}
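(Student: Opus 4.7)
The plan is to deduce the corollary as an immediate application of Theorem~\ref{thm:BR}. Since $(E)_0$ and $(P)_M^{\ell}$ are given by hypothesis, the only remaining task is to verify that the ellipticity condition $\BR{1}{\eta_*}$ follows from $\eta_*>1/2$.

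I would verify $\BR{1}{\eta_*}$ in the most degenerate radius allowed by the definition, namely $R=0$. Since $B_0=\{0\}$ contains only the starting point, the walk is forced out of $B_0$ after exactly one step, so $T_{B_0}\equiv 1$ under $P_0$. Consequently $E_0\!\left[T_{B_0}^{1+c}\right]=1<\infty$ for every $c>0$, and the moment requirement \eqref{eq:moment} is trivially satisfied. It remains only to choose $c$ so that the geometric constraint \eqref{eq:randc} holds; specialized to $a=1$, $b=\eta_*$ and $R=0$, that inequality reads
$$
0 \;>\; \frac{1+c}{\eta_{*}\,c}-2, \qquad\text{equivalently}\qquad (2\eta_{*}-1)\,c \;>\; 1.
$$
Because $\eta_*>1/2$, the coefficient $2\eta_*-1$ is strictly positive, so any $c>\frac{1}{2\eta_*-1}$ does the job. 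With the pair $(R,c)=\bigl(0,\tfrac{2}{2\eta_{*}-1}\bigr)$ (say), condition $\BR{1}{\eta_*}$ is in force, and Theorem~\ref{thm:BR} immediately yields ballistic behavior in direction $\ell$.

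There is no genuine obstacle in this argument; all of the analytic substance is encapsulated in Theorem~\ref{thm:BR}, and the corollary merely isolates the parameter regime in which $\BR{a}{b}$ can be verified without doing any nontrivial exit-time estimate. The threshold $\eta_*>1/2$ appears precisely because $2\eta_*-1$ has to be positive in order to combine the trivial bound $T_{B_0}\equiv 1$ with the smallest admissible radius $R=0$.
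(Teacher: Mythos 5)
Your proof is correct and follows essentially the same route as the paper: take $R=0$ so that $T_{B_0}\equiv 1$ makes the moment condition trivial, then note that $\eta_*>1/2$ lets one choose $c$ large enough for inequality \eqref{eq:randc}, and apply Theorem~\ref{thm:BR}. The only difference is that you make the choice of $c$ explicit.
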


        \medskip
Notice that the above corollary extends ballisiticity to a whole class
of elliptic environments. It says that uniform ellipticity can be
weakened and replaced by the  conditions $(E)_0$ and $\eta_* >
1/2$. That is, uniform ellipticity can be relaxed as long as we have
good enough behavior of the jump probabilities near zero on two orthogonal directions $e_j$ and $e_k$ such that $\omega(0,e_j)^{-1}$ and $\omega(0, e_k)^{-1}$ have light enough tails. 

Condition $\BR{1}{\eta_*}$ is  implied by the most general criteria for ballisticity for elliptic random walks in random environment. 
Fribergh and Kious proved in 
\cite{FK16} that under conditions $(E)_0$, $(P)_M^{\ell}$ for $M$
large enough and their condition $(K)_1$ the walk has ballistic
behavior. At Section \ref{sec:props} we prove that condition $(K)_1$ implies $\BR{1}{\eta_*}$.

\medskip

Under conditions which are stronger than those imposed in Theorems
\ref{thm:X}
and \ref{thm:BR}, we can derive central limit theorems. We say that 
an annealed central limit theorem is satisfied if 

	$$
	\epsilon^{1/2}\left(X_{\lfloor\epsilon^{-1}\cdot\rfloor}-\lfloor\epsilon^{-1}
          \cdot\rfloor v\right) 
	$$
        converges in law under $P_0$ as $\epsilon$ goes to $0$ to a 
        Brownian 
        Motion with non-degenerate deterministic covariance matrix.
        We say that 
a quenched central limit theorem is satisfied if $\mathbb P$-a.s.

	$$
	\epsilon^{1/2}\left(X_{[\epsilon^{-1}\cdot]}-[\epsilon^{-1}\cdot]v\right) 
	$$
        converges in law under $P_{0,\omega}$ as $\epsilon$ goes to $0$ to a 
        Brownian 
        Motion with non-degenerate deterministic covariance matrix.
        We have then the following annealed and quenched central limit theorems.

\medskip

\begin{theorem}\label{thm:cltX2}
  Consider an RWRE in $\Z^2$ whose
  environment satisfies conditions $(E)_0, (X)_2$ and $(P)^{\ell}_M$
  for some $M>35$ and $\ell \in S^{1}$.
  Then, both an annealed and a quenched central limit theorem are satisfied. 
      \end{theorem}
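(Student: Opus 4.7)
The plan is to pass through the Sznitman--Zerner regeneration structure and promote the first--moment estimates underlying Theorem~\ref{thm:X} to second--moment estimates, which will then deliver the annealed CLT; the quenched statement is obtained as a further upgrade by a joint--regeneration argument. The guiding principle is that the parameter $a$ in condition~$(X)_a$ is calibrated exactly so that it controls the $a$-th moment of the regeneration time, and $a=2$ is precisely what is needed for a CLT.

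Since $(X)_2$ clearly implies $(X)_1$ (any admissible exponents $\alpha,\beta$ for $(X)_2$ also satisfy \eqref{eq:edgerel}--\eqref{eq:squarerel} with $a=1$), Theorem~\ref{thm:X} already yields ballisticity with a deterministic velocity $v$ satisfying $v\cdot\ell>0$. The sequence of regeneration times $(\tau_k)_{k\ge 1}$ in direction $\ell$, in the sense of~\cite{Sz02}, is then $P_0$-almost surely finite, and under $P_0$ the increments $(\tau_{k+1}-\tau_k,\,X_{\tau_{k+1}}-X_{\tau_k})_{k\ge 1}$ are i.i.d. and independent of $(\tau_1,X_{\tau_1})$. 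The decisive step is to upgrade this to $E_0[\tau_1^2]+E_0[|X_{\tau_1}|^2]<\infty$. The displacement bound follows from $(P)^\ell_M$ with $M>35$ via the standard tail estimate for regeneration positions adapted to the elliptic setting as in~\cite{BDR14,CR14}. The bound on~$\tau_1$ is the genuinely new input: I would revisit the flow--network decomposition used to prove Theorem~\ref{thm:X} and track how the exponent~$a$ enters the moment of the exit time from each type of local trap. There, the contribution of an edge trap to $E_0[\tau_1]$ is controlled by a joint singularity of the form $E[(Q_\vdash+Q_\dashv)^{-1}]$, which is integrable precisely when $\beta_\vdash+\beta_\dashv>1$; a Beta--function scaling then shows that $E[(Q_\vdash+Q_\dashv)^{-a}]<\infty$ iff $\beta_\vdash+\beta_\dashv>a$, and similarly for the wedge and square cases treated through \eqref{eq:wedgerel}--\eqref{eq:squarerel}. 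Replacing $a=1$ with $a=2$ therefore tightens each local--trap contribution exactly so that it becomes square integrable, and combined with the polynomial decay supplied by $(P)^\ell_M$ this yields $P_0(\tau_1>t)=O(t^{-2-\delta})$ for some $\delta>0$.

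With i.i.d.\ regeneration increments of finite second moment and non--degenerate mean in direction $\ell$, the annealed CLT follows from the classical renewal--CLT scheme of~\cite{Sz02}: decompose $X_n=X_{\tau_{k(n)}}+(X_n-X_{\tau_{k(n)}})$, invoke Donsker's theorem on the renewal--indexed partial sums with $k(n)$ the number of regenerations by time~$n$, and check that the overshoot past the last regeneration is negligible after diffusive rescaling.

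The main obstacle is the quenched CLT in dimension two. The standard reduction shows that it suffices to prove that for every bounded Lipschitz~$F$, the $\mathbb P$--variance of the quenched expectation $E_{0,\omega}\bigl[F\bigl(\sqrt{\epsilon}\,X_{\lfloor\epsilon^{-1}\cdot\rfloor}\bigr)\bigr]$ tends to zero, which is carried out via a joint regeneration argument for two independent walks in the same environment. The crucial ingredient is a sharp second--moment estimate on how often the two trajectories meet; in $d=2$ the transverse fluctuations are only logarithmically small, so this estimate is the most delicate point of the proof. I expect it can be handled once $(X)_2$ and $(P)^\ell_M$ with $M>35$ supply enough moments on $\tau_1$ and enough decay of the atypical--exit probabilities to drive the joint--regeneration comparison, but verifying the precise exponents needed in the two--dimensional analysis is where I expect the bulk of the technical work to lie.
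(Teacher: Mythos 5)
Your high-level route is aligned with the paper's, but two load-bearing steps are left as sketches where the paper supplies actual machinery, so as it stands the proposal has genuine gaps.

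First, the paper does not estimate $E_0[\tau_1^2]$ directly. Instead it shows $(X)_a\Rightarrow\BR{a}{\eta_*}$ by producing, via the random-flow/min-cut construction (Theorem~\ref{thm:randomflow}, Lemmas~\ref{lemma:flowpath} and~\ref{lemma:expecfinite}), moment bounds on $T_{B_R}$ for every fixed $R$; it then feeds $\BR{a}{\eta_*}$ into the attainability framework (Proposition~\ref{prop:BR}, together with $(E)_0\Rightarrow\conditionH$ from Lemma~\ref{lemma:mineta}) and finally obtains the tail bound $P_0(\tau_1>u)\le u^{-(a+\delta)}$ from the black-box Theorem~\ref{thm:framework}. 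Your heuristic that $E[(Q_\vdash+Q_\dashv)^{-a}]<\infty$ iff $\beta_\vdash+\beta_\dashv>a$ by Beta-function scaling does not reflect the structure of condition $(X)_a$ (whose moment hypotheses are $\int Q_\vdash^{-\beta_\vdash}\,\mathrm{d}\mathbb P<\infty$, $\int Q_\vdash^{-\beta_\perp}Q_{\ulcorner}^{-\alpha_{\ulcorner}}\mathbb{1}_{\{\ldots\}}\,\mathrm{d}\mathbb P<\infty$, etc.\ in \eqref{x:eq1}--\eqref{x:eq2}, not moments of sums of $Q$'s), and it does not substitute for the combination of the min-cut estimate \eqref{eq:capcut} with the linear relations \eqref{eq:edgerel}--\eqref{eq:squarerel}. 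Until that step is made precise, the claimed second moment of $\tau_1$ is not established.

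Second, you leave the quenched CLT essentially open: you describe the standard variance reduction and joint-regeneration strategy and then say you expect it can be handled. This is precisely the part the paper does not reprove; it invokes Theorem~\ref{thm:genframework}(b), which packages the annealed and quenched CLTs simultaneously and rests on Theorem~1.1 of~\cite{CR14} together with Proposition~5.1 of~\cite{FK16}, once $(A)_2$ is verified. If you want to follow the paper's route, the clean path is: $(X)_2\Rightarrow E_0[T_{B_R}^{2+\delta'}]<\infty$ for all $R$ $\Rightarrow\BR{2}{\eta_*}\Rightarrow(A)_2\Rightarrow$ both CLTs, all by citing the framework theorems. If you want to give an independent quenched argument you need to actually execute the joint-regeneration variance bound in $d=2$, which you do not do.

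A minor correct observation in your proposal is that $(X)_2\Rightarrow(X)_1$, hence ballisticity already holds; the paper does not need this intermediate step but it is consistent with the logic. The annealed CLT part of your sketch (renewal decomposition, Donsker on regeneration increments, negligible overshoot) is correct once the $\tau_1$ moments are in hand, and is essentially what the cited framework delivers.
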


      \medskip

      \begin{theorem} \label{thm:cltBR}
        Consider an RWRE in an
              environment satisfying conditions $(E)_0$,
              $(P)^{\ell}_M$ for some $M>15d+5$ and $\ell \in
              S^{d-1}$ and               $\BR{2}{\eta_*}$.
                Then, both an annealed and a quenched central limit theorem are satisfied. 
              \end{theorem}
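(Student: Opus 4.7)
The plan is to follow the regeneration-time approach standard in the RWRE literature and promote the ballisticity result of Theorem \ref{thm:BR} to an invariance principle by upgrading the moment bound on the first regeneration time from first-order (which gave the LLN) to second-order plus a small margin. Concretely, I would set up the Sznitman--Zerner regeneration structure in direction $\ell$, producing a sequence of regeneration times $\tau_1<\tau_2<\cdots$ such that, conditioned on the event $\{D=\infty\}$ that the walk never backtracks past its starting hyperplane, the increments $(X_{\tau_{k+1}}-X_{\tau_k},\,\tau_{k+1}-\tau_k)_{k\ge 1}$ are i.i.d.\ under $P_0$. Under $(P)^{\ell}_M$ with $M>15d+5$, condition $(T')|\ell$ holds by \cite{CR14}, so these regeneration times are well-defined $P_0$-a.s., and Theorem \ref{thm:BR} already supplies $E_0[\tau_1\mid D=\infty]<\infty$ with limiting velocity $v$.

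The core step is to establish $E_0[\tau_1^2\mid D=\infty]<\infty$ using $\BR{2}{\eta_*}$. The argument would parallel the proof of Theorem \ref{thm:BR} but with first moments of exit times from $B_R$ replaced by $(2+c)$-th moments: $\tau_1$ is decomposed into a local trapping contribution controlled by exit times from boxes of radius $R$ and a macroscopic displacement contribution controlled by the polynomial backtracking bounds coming from $(P)^{\ell}_M$. The inequality \eqref{eq:randc} with $a=2$ is calibrated precisely so that, for the chosen $R$ and $c$, the local contribution carries a finite moment strictly above $2$, which combined with the near-exponential tail provided by $(T')|\ell$ yields finiteness of $E_0[\tau_1^2\mid D=\infty]$. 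Once this is in hand, the annealed invariance principle follows from the classical CLT for i.i.d.\ sums applied to the regeneration increments, exactly as in \cite{Sz02}.

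For the quenched statement I would adapt the approach of \cite{BRS16}: the goal is to show that the quenched variance of $X_n$ coincides with the annealed variance, which reduces to a control on the annealed overlap between two independent copies $X$ and $\widetilde{X}$ of the walk sharing the same environment. This in turn follows from a moment of $\tau_1$ of order strictly greater than $2$ (supplied by the extra $c$ in $\BR{2}{\eta_*}$) combined with a bound on the expected number of common sites visited by the two copies, which is guaranteed by $(P)^{\ell}_M$ with $M$ large through the same intersection estimates used in the uniformly elliptic theory.

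The main obstacle is the second-moment bound on $\tau_1$: one must track through the proof of Theorem \ref{thm:BR} how the parameters $R$, $c$ and $\eta_*$ interact in the decomposition of $\tau_1$, and verify that every estimate where $(1+c)$-th moments of box exit times were used to bound $E_0[\tau_1]$ admits a $(2+c)$-th moment analogue without loss. The condition $\BR{2}{\eta_*}$ is precisely what this bookkeeping requires, so the task is a careful calibration of exponents rather than a new conceptual step.
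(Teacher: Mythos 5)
Your proposal is correct and follows essentially the same route as the paper: you reduce the CLT to showing that $\tau_1$ has a finite moment of order strictly greater than $2$, derive this from $\BR{2}{\eta_*}$ by the same argument used for $\BR{1}{\eta_*}$ with the exponents shifted, and then invoke the classical i.i.d.\ CLT for the annealed part and an intersection/overlap argument for the quenched part. The paper packages exactly this chain through the intermediate notion of $2$-good attainability $(A)_2$ (Proposition \ref{prop:BR} applied with $a=2$, $b=\eta_*$), then cites Theorem \ref{thm:framework} for the tail bound $P_0(\tau_1>u)\le u^{-(2+\delta)}$ and Theorem \ref{thm:genframework} (which rests on Theorem 1.1 of \cite{CR14}) for the passage from that tail bound to both CLTs, so your description is simply an unpacking of those black boxes.
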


              \medskip

              We will continue with Section \ref{sec:localtrapping} where we will
              explain the meaning of condition $(X)_a$ and the
              necessity of introducing the correlation assumption (\ref{x:eq2}).
In Section \ref{section-two}  we will present the proof
of
theorems \ref{thm:X}, \ref{thm:BR}, \ref{thm:cltX2}
and \ref{thm:cltBR}. In Section \ref{sec:props} we will present the proof of Proposition \ref{prop:zerospeed} and a final discussion on the sharpness of our general condition $(B)_1^{\eta_*}$.

\medskip

\section{Local trapping and correlations}\label{sec:localtrapping}
In this section we discuss in detail condition $(X)_1$, more specifically \eqref{x:eq1} - \eqref{eq:squarerel}, together with the connection between singularities and local trapping. First let us explain the meaning behind \eqref{x:eq1} together with relations given by \eqref{eq:edgerel}-\eqref{eq:squarerel}. 
In what follows, we will call the exponents $\beta_\vdash$ and
$\alpha_\lrcorner$ and the exponents corresponding to rotations which
are multiples of $90$ degrees, the
{\it singularities} of the corresponding edges.

The most basic trap for the walk is a single edge. If we want to avoid
the walk to be trapped on it we should expect that, for each vertex on
the tip of the edge, the transition probabilities pointing out of the
edge have good tails, or in other words, have large singularities. This is schematically illustrated in the Figure 1.1.

We could reason in a similar manner for other structures more complex than an edge, such as wedges, horseshoes (which is pictured below) and squares. Thus, in general, one could argue that the walk should be able to escape any finite structure as long as the transitions of the `corners' of this structure have good enough singularities, with `good enough' meaning that the sum of the singularities is greater than one. This is illustrated in the picture below for a horseshoe format graph, and for a wedge and a square in Figure \ref{fig:wedgeblock}.

\begin{figure}[H]\label{fig:hscase}
	\includegraphics[scale=0.2]{./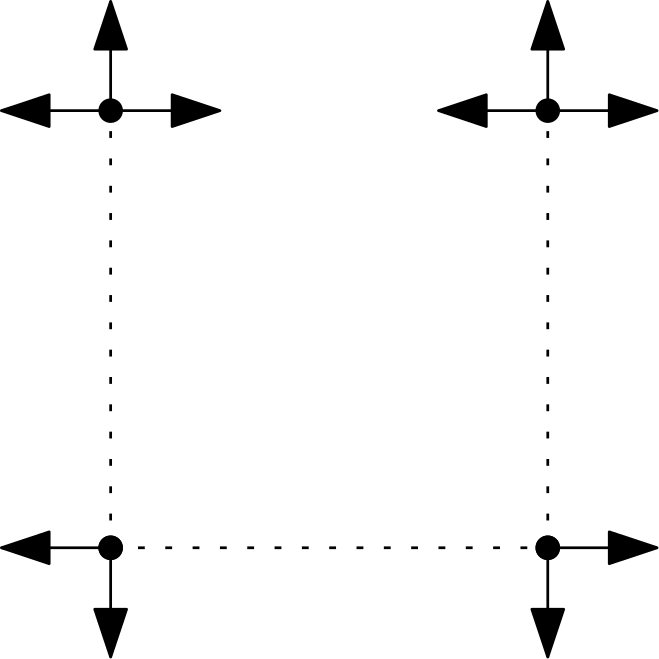}
	\caption{Schematic of a horseshoe and the transitions pointing out it.}
\end{figure}

In light of the above discussion, relation \eqref{x:eq1} together with relations \eqref{eq:edgerel}, \eqref{eq:wedgerel} and \eqref{eq:squarerel}, in condition $(X)_1$, prevent edges, wedges or squares whose transitions at the `corners' have bad singularities.  Notice that the case of a horseshoe is covered by the square, that is, if the transitions of the corners of a square have good tails, then the same holds for the transitions at the corners of a horseshoe. For this reason, condition $(X)_1$ does not include a relation covering specifically the singularities coming from a horseshoe.

 For the case of an edge $e$, in \cite{CR14} the authors prove that $E_0 \left[ \exittime{e}\right] $ is finite
if the singularities at the tip of the edge satisfy \eqref{eq:edgerel}. However, the reasoning of relating escapability to the singularities at the `corners' of a structure does not go much further. As we will show latter, it is possible to construct an environment such that the singularities of the `corners' of a wedge $W$ sum more than one, but the walk does not escape it in finite mean time, that is, $E_0 [\exittime{W}] = \infty$.

The above discussion together with Proposition \ref{prop:trapwedge} below show that the finiteness of $E_0 [\exittime{S}]$ for some finite graph $S$ other than a single edge hides correlations between the transitions on the vertices in $S$. In other words, we can say that in general the finiteness of $E_0 [\exittime{S}]$ cannot be guaranteed by a condition involving only the singularities of the transitions at the `corners' of $S$. For this reason, we have relations \eqref{x:eq2}, which should capture the correlations hidden by $E_0 [\exittime{S}] < \infty$. Here we must point out one of the advantages of condition $(X)_1$. Even though Proposition \ref{prop:trapwedge} shows that $E_0[\exittime{S}]$ involves correlations between the transitions on vertices in $S$, $(X)_1$ is still a condition which is verifiable by looking at the transitions of a single vertex. 

The remainder of this section is devoted to formalize the above discussion, that is, we construct an environment such that the singularities at the tips of edges, wedges and square sum more than one, but the walk still gets trapped in a wedge/square. In order to do that, consider the following densities
\begin{equation}
	f(x) = \begin{cases}
		C_1 x^ { \beta_{\dashv}-1}, & \text{ for }x \in (0,1/8] \\
		0, & \text{ otherwise.}
	\end{cases} \quad g(x) = \begin{cases}
		C_2 x^ { \beta_{\perp}-1}, & \text{ for }x \in (0,1/8] \\
		0, & \text{ otherwise.}
	\end{cases}
\end{equation}
and
\begin{equation}
	h(x) = \begin{cases}
		C_3 x^ { \beta_{\vdash}-1}, & \text{ for }x \in (0,1/8] \\
		0, & \text{ otherwise.}
	\end{cases},
\end{equation}
where $C_1,C_2,C_3$ are normalizing constants and $\beta_{\dashv}, \beta_{\perp}, \beta_{\vdash}$ are all strictly smaller than one and satisfy the following relations
\begin{equation}\label{eq:rel}
	\beta_{\dashv} \ge \beta_{\perp}; \quad 	\beta_{\dashv} + \beta_{\vdash} > 1 ; \quad \frac{\beta_{\dashv}}{2} + \beta_\perp + \beta_{\vdash} > 1; \quad \frac{\beta_{\dashv}}{2} + \frac{\beta_\perp}{2} + \beta_{\vdash} < 1.
\end{equation}
Now, consider the random variables $ \xi $, whose density is $f$, $\zeta$ whose density is $g$ and $\chi$ whose density is $h$. We then construct our environment in the following way: we consider the i.i.d sequences $\{\xi_x\}_{x \in \Z^2}$, $\{\zeta_x\}_{x \in \Z^2}$ and $\{\chi_x\}_{x \in \Z^2}$ together with an i.i.d sequence $\{U_x\}_{x \in \Z^2}$, where $U_x \sim Uni[0,1]$. We also assume that these four sequences are independent among themselves. Then, according to $U_x$ we assign one of the following transitions described below

\begin{figure}[H]\label{fig:env}
	\includegraphics[scale=1.25]{./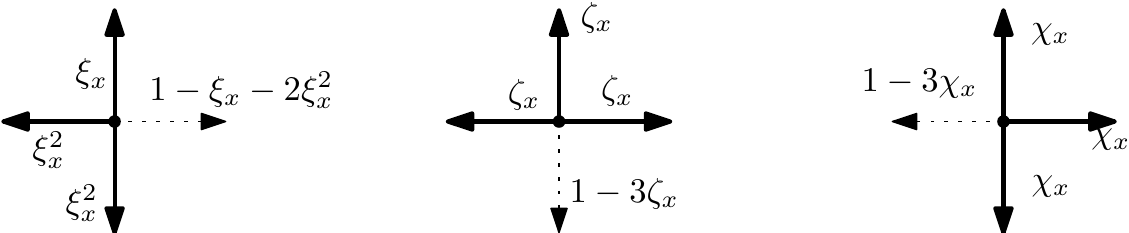}
	\caption{From left to right, three types of transitions: I, II and III.}
\end{figure}
More specifically, if $U_x \le 1/3$, we assign to $x$ a type I transition, if $1/3 \le U_x \le 2/3$ we assign to it a type II, whereas if $U_x \ge 2/3$ we assign a type III transition. 

Regarding the environment above defined, our first result concerns its singularities.
\begin{lemma}Consider a RWRE on $\Z^2$ with an i.i.d environment distributed as defined above. Then, it satisfies \eqref{x:eq1} and all relations given by \eqref{eq:edgerel}-\eqref{eq:squarerel}. However, it does not satisfies \eqref{x:eq2}.
	
\end{lemma}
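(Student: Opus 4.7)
The plan is to exploit that, for $X$ with density proportional to $x^{\beta-1}\mathbb{1}_{(0,1/8]}(x)$, the negative moment $\E[X^{-\alpha}]$ is finite iff $\alpha<\beta$. Because $f,g,h$ are supported in $(0,1/8]$, at each site at least one of the four transition probabilities is bounded below by a deterministic positive constant, so only those configurations in which every coordinate entering a given max is drawn from the small-mass densities $f,g,h$ contribute a singularity near zero. For each of the three types I, II, III and each diagrammatic symbol, I would then identify $Q_*$ explicitly as a function of the triple $(\xi_0,\zeta_0,\chi_0)$, or as a deterministic quantity bounded below by a positive constant.

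I would first verify the integrability conditions of \eqref{x:eq1} together with the sum-relations \eqref{eq:edgerel}--\eqref{eq:squarerel}. For each $Q_\vdash$ or $Q_\lrcorner$ and their $90^\circ$ rotations, determining which type contributes the singularity reduces the verification of the corresponding integral in \eqref{x:eq1} to a one-dimensional estimate $\int_0^{1/8}x^{\beta-\alpha-1}\,dx$, finite under $\alpha<\beta$. The sum-relations then follow by matching the chosen exponents against \eqref{eq:rel}: relation \eqref{eq:edgerel} is the second inequality in \eqref{eq:rel} directly; for \eqref{eq:wedgerel}, taking $\alpha_\ulcorner\ge\beta_\dashv/2$ converts the bound into the third inequality of \eqref{eq:rel}; and \eqref{eq:squarerel} follows by summing four corner exponents, each bounded below by $\beta_\dashv/2$ via rotational symmetry of the construction.

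The heart of the lemma is the failure of \eqref{x:eq2}. For any admissible choice of exponents compatible with the preceding relations, I would show that at least one of the three integrands in \eqref{x:eq2} is non-integrable. Concretely, I would focus on the third integral $\int Q_\lrcorner^{-\alpha_\lrcorner}Q_\llcorner^{-\alpha_\llcorner}\,d\mathbb{P}$, isolate the type on which both $Q_\lrcorner$ and $Q_\llcorner$ are simultaneously driven by the triple $(\xi_0,\zeta_0,\chi_0)$, and bound the integrand below by a monomial $\xi_0^{-a_1}\zeta_0^{-a_2}\chi_0^{-a_3}$ with $a_1+a_2+a_3=\alpha_\lrcorner+\alpha_\llcorner$. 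Applying Tonelli and the density formulas, finiteness would force each $a_i$ to be strictly below the corresponding singularity exponent, yielding $\alpha_\lrcorner+\alpha_\llcorner\le\beta_\dashv/2+\beta_\perp/2+\beta_\vdash$, which by the last inequality of \eqref{eq:rel} is strictly less than $1$; this is incompatible with the square relation \eqref{eq:squarerel} once both corners are accounted for via rotational symmetry, forcing the integral to diverge.

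The main obstacle is the bookkeeping: correctly matching the three types I, II, III to the three distinct integrands in \eqref{x:eq2} and to the precise expressions of $Q_\lrcorner,Q_\llcorner,Q_\ulcorner,Q_\vdash$ in terms of $(\xi_0,\zeta_0,\chi_0)$ on each type-event, while keeping track of which rotations of the sum-relations survive the construction's asymmetry in the role played by $\beta_\dashv,\beta_\perp,\beta_\vdash$. Once this diagram/type correspondence is fixed, every remaining estimate reduces to elementary one-dimensional moment comparisons driven by \eqref{eq:rel}.
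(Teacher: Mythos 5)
Your handling of \eqref{x:eq1} and the sum-relations \eqref{eq:edgerel}--\eqref{eq:squarerel} matches the paper's: identify which of the types I, II, III drives the singularity of each $Q_{*}$, reduce to one-dimensional moments, and read off the relations from \eqref{eq:rel}. You also correctly recognize that merely showing divergence for the ``maximal'' exponents $\alpha_\lrcorner = \beta_\perp$, etc.\ is insufficient, since $(X)_1$ only requires \emph{some} admissible choice; so the argument must rule out all choices $\alpha' \le \alpha$, $\beta' \le \beta$.

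However, there is a genuine gap in the heart of your proof, the derivation that no choice can make all three integrals in \eqref{x:eq2} finite. You focus on the single integral $\int Q_\lrcorner^{-\alpha_\lrcorner}Q_\llcorner^{-\alpha_\llcorner}\,d\P$ and assert that finiteness forces $\alpha_\lrcorner + \alpha_\llcorner \le \beta_\dashv/2 + \beta_\perp/2 + \beta_\vdash$. This cannot be extracted from that one integral: on the type where both $Q_\lrcorner$ and $Q_\llcorner$ reduce to the same small variable $\zeta$, the integrand is $\zeta^{-(\alpha_\lrcorner+\alpha_\llcorner)}$ against density $\propto x^{\beta_\perp - 1}$, yielding only $\alpha_\lrcorner + \alpha_\llcorner < \beta_\perp$; the $\beta_\dashv/2$ and $\beta_\vdash$ terms simply do not appear. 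Moreover your closing step, ``incompatible with the square relation once both corners are accounted for via rotational symmetry,'' is non-rigorous: even if $\alpha_\lrcorner + \alpha_\llcorner < 1$ and $\alpha_\ulcorner + \alpha_\urcorner < 1$, their sum could still exceed $1$, so no contradiction with \eqref{eq:squarerel} follows from these two facts alone.

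What the paper actually needs, and what your proposal does not supply, is \emph{three} separate constraints extracted from three rotations of the third integrand in \eqref{x:eq2}, each applied to a different transition type: $\alpha'_\lrcorner + \alpha'_\llcorner < \beta_\perp$, $\alpha'_\lrcorner + 2\alpha'_\urcorner < \beta_\dashv$, and $\alpha'_\llcorner + \alpha'_\ulcorner < \beta_\vdash$. The weights here (in particular the factor $2$ coming from $Q_\urcorner \sim \xi_0^2$ on type I) are precisely what makes the constraints combine with the design inequality $\frac{\beta_\dashv}{2} + \frac{\beta_\perp}{2} + \beta_\vdash < 1$ to force $\alpha'_\lrcorner + \alpha'_\urcorner + \alpha'_\llcorner + \alpha'_\ulcorner < 1$, contradicting \eqref{eq:squarerel}. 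Your argument needs to be redone along these lines: write down all three rotated constraints explicitly, keep track of the $\xi_0^2$ coupling on the relevant type, and only then take the weighted combination that exposes the contradiction.
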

\begin{proof} We begin with a technical comment. First observe that formally, condition \eqref{x:eq1} is not satisfied for the $\beta$'s exponents in the definition of the densities $f,g$ and $h$. However, since \eqref{x:eq1} is satisfied whenever we choose an exponent arbitrarily close to $\beta_\dashv$, for example, but smaller, we will abuse the notation by saying that $\beta_\dashv$ is exactly the same $\beta_\dashv$ in the definition of $f$. We also do the same thing for all the other singularities. 
	
	Observe that since $\xi,\zeta$ and $\chi$ are all smaller than $1/8$, the dashed directions illustrated by Figure 2.2 have probability at least $5/8$ to be crossed. This implies that if we consider the max of three directions $\dashv$ , either one of the three has probability at least $5/8$ to be crossed, what happens when we transitions of type II and III, or $Q_{\dashv}$ is distributed essentially as $\xi$, which implies that $Q_{\dashv}$ has singularity $\beta_{\dashv}$. Arguing similarly we conclude that $Q_\perp$ has singularity $\beta_\perp$, $Q_{\vdash}$ has singularity~$\beta_\vdash$. Moreover, using that $\beta_{\dashv} > \beta_\perp$, we also have that 
	\begin{equation}\label{eq:sings}
		\beta_{\top} = \infty,\; \alpha_{\stackrel{}{\ulcorner}} = \beta_{\vdash},\; \alpha_{\stackrel{}{\urcorner}} = \frac{\beta_{\dashv}}{2},\; \alpha_{\lrcorner} = \beta_{\perp},\; \alpha_{\llcorner} = \beta_\perp \wedge \beta_{\vdash}.
	\end{equation}
Notice that by \eqref{eq:rel} and the above relations, our singularities satisfies \eqref{eq:edgerel} and its 90 degree rotation, since $\beta_\top = \infty$. In the case of relations \eqref{eq:wedgerel} and its rotations, we do need to check the cases which include $\beta_\top$, since it is infinity. Thus, we are left to check 
$$
\alpha_{\stackrel{}{\urcorner}} + \beta_\perp + \beta_\vdash \stackrel{\eqref{eq:sings}}{=} \frac{\beta_{\dashv}}{2} + \beta_\perp + \beta_\vdash \stackrel{\eqref{eq:rel}}{>} 1,
$$
and
$$
\alpha_{\stackrel{}{\ulcorner}} + \beta_\perp + \beta_\dashv \stackrel{\eqref{eq:sings}}{=} \beta_\vdash + \beta_\perp + \beta_\dashv  \stackrel{\eqref{eq:rel}}{>} 1.
$$
For \eqref{eq:squarerel} we only have to check one condition since it is invariant under 90 degree rotations
$$
\alpha_{\stackrel{}{\ulcorner}}	+ \alpha_{\stackrel{}{\urcorner}} + \alpha_{\lrcorner}+ \alpha_{\llcorner} \stackrel{\eqref{eq:sings}}{=} \beta_{\vdash} + \frac{\beta_{\dashv}}{2} + \beta_{\perp} + \beta_\perp \wedge \beta_{\vdash} \stackrel{\eqref{eq:rel}}{>} 1,
$$
which proves that the environment satisfies \eqref{x:eq1} and all relations given by \eqref{eq:edgerel}-\eqref{eq:squarerel}. Notice that we have just proven that the structures edges, wedges and squares have the property that the sum of the singularities of the transition probabilities point out of them is greater than one. 

In order to prove that the environment does not satisfies \eqref{x:eq2}, notice that one of the requirement in such condition is given by
$$
\int_{\Omega} Q^{-\beta_{\perp}}_{\dashv} Q^{-\alpha_{\stackrel{}{\urcorner}}}_{\stackrel{}{\urcorner}} \mathbb{1}_{\{ \max\{\dashv\} = \uparrow\}} {\rm d}\mathbb{P}< \infty.
$$
Notice that the only transition type satisfying $\max\{\dashv\} = \uparrow$ is the type I. Thus, using the independence of $U_0$ and $\xi_0$ we have that
\begin{equation}\label{eq:infinity}
	\int_{\Omega} Q^{-\beta_{\perp}}_{\dashv} Q^{-\alpha_{\stackrel{}{\urcorner}}}_{\stackrel{}{\urcorner}} \mathbb{1}_{\{ \max\{\dashv\} = \uparrow\}} {\rm d}\mathbb{P} = \frac{1}{3}\int_{\Omega} \xi_0^{-\beta_\perp} \xi_0^{-2\cdot \frac{\beta_\dashv}{2}}{\rm d}\mathbb{P} =  \infty,	
\end{equation}
since $\xi$ has density $f$. However this is not enough to prove that the $(X)_1$ is not satisfied. Notice that condition $(X)_1$ requires the existence of a set of numbers $\alpha$'s and $\beta$'s satisfying relations \eqref{x:eq1}-\eqref{eq:squarerel}. At \eqref{eq:infinity} we showed that we cannot satisfy all relations required by $(X)_1$ choosing the largest $\alpha$'s and $\beta$'s. However we could try to choose a new set $\alpha'$'s and $\beta'$ with the property that for all directions $\alpha'\le \alpha$ and $\beta' \le \beta$. In the next lines we will show that is not possible to choose a set of $\alpha'$'s and $\beta'$'s that satisfies \eqref{x:eq1}-\eqref{eq:squarerel} under the additional constraint that  
\begin{equation}\label{eq:badrel}
\frac{\beta_{\dashv}}{2} + \frac{\beta_\perp}{2} + \beta_{\vdash} < 1.
\end{equation}
Notice that in order to satisfy \eqref{x:eq2}, we must have 
$$
\infty > \int_{\Omega} Q^{-\alpha'_{\stackrel{}{\lrcorner}}}_{\stackrel{}{\lrcorner}}Q^{-\alpha'_{\stackrel{}{\llcorner}}}_{\stackrel{}{\llcorner}}  {\rm d}\mathbb{P} > \frac{1}{3} \int_{\Omega} \zeta^{-\alpha'_{\stackrel{}{\lrcorner}}}\zeta^{-\alpha'_{\stackrel{}{\llcorner}}}  {\rm d}\mathbb{P},
$$
which implies that 
\begin{equation*}
	\alpha'_{\stackrel{}{\lrcorner}} + \alpha'_{\stackrel{}{\llcorner}} < \beta_\perp.
\end{equation*}
Arguing in a similar manner we have that
\begin{equation*}
	\infty > \int_{\Omega} Q^{-\alpha'_{\stackrel{}{\lrcorner}}}_{\stackrel{}{\lrcorner}}Q^{-\alpha'_{\stackrel{}{\urcorner}}}_{\stackrel{}{\urcorner}}  {\rm d}\mathbb{P} > \frac{1}{3} \int_{\Omega} \xi^{-\alpha'_{\stackrel{}{\lrcorner}}}\xi^{-2\alpha'_{\stackrel{}{\urcorner}}}  {\rm d}\mathbb{P},
\end{equation*}
which implies that
\begin{equation*}
	\alpha'_{\stackrel{}{\lrcorner}} + 2\alpha'_{\stackrel{}{\urcorner}} < \beta_\dashv.
\end{equation*}
And using the exact same reasoning we also deduce that 
\begin{equation*}
	\alpha'_{\stackrel{}{\llcorner}} + \alpha'_{\stackrel{}{\ulcorner}} < \beta_\vdash.
\end{equation*}
Using the above inequalities on \eqref{eq:badrel} leads us to
$$
1 > \frac{\beta_{\dashv}}{2} + \frac{\beta_\perp}{2} + \beta_{\vdash} \ge \frac{\alpha'_{\stackrel{}{\lrcorner}} + 2\alpha'_{\stackrel{}{\urcorner}}}{2} + \frac{\alpha'_{\stackrel{}{\lrcorner}} + \alpha'_{\stackrel{}{\llcorner}} }{2} + \alpha'_{\stackrel{}{\llcorner}} + \alpha'_{\stackrel{}{\ulcorner}} \ge \alpha'_{\stackrel{}{\lrcorner}} + \alpha'_{\stackrel{}{\urcorner}} + \alpha'_{\stackrel{}{\llcorner}} + \alpha'_{\stackrel{}{\ulcorner}},
$$
which contradicts \eqref{eq:squarerel}. Thus, under \eqref{eq:badrel}, we cannot choose exponents $\alpha'$'s and $\beta'$'s in order to satisfy $\alpha' \le \alpha, \beta'\le \beta$ and \eqref{x:eq1}-\eqref{eq:squarerel} together with \eqref{eq:badrel}. So such environment does not satisfy $(X)_1$.
\end{proof}
 Observe that by Lemma 2.1 in \cite{CR14}, the walk cannot be trapped in any edge. However, it can be trapped in a wedge/square, as ensures the proposition below.
\begin{proposition}[Trapped in a wedge/square]\label{prop:trapwedge}Consider a RWRE on $\Z^2$ with an i.i.d environment distributed as defined above. Let $W$ be the wedge defined by the following vertices~$0, (1,0)$ and $(0,1)$, then
	$$
		E_0 \left[ \exittime{W}\right] = \infty.
	$$
\end{proposition}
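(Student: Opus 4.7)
The plan is to use one-step analysis on the three vertices of $W$ to get a closed form for the quenched expected exit time $h_0 := E_{0,\omega}[T_W]$, and then identify a joint event at the three vertices on which the denominator of $h_0$ is small enough to force $\E[h_0] = \infty$. Writing $a = \omega(0,e_1)$, $b = \omega(0,e_2)$, $c = \omega((1,0),e_{-1})$, $d = \omega((0,1),e_{-2})$, the system $h_0 = 1 + a h_{(1,0)} + b h_{(0,1)}$, $h_{(1,0)} = 1 + c h_0$, $h_{(0,1)} = 1 + d h_0$ gives $h_0 = (1+a+b)/(1-ac-bd)$. Using $1-a-b = \omega(0,e_{-1})+\omega(0,e_{-2})$ to rewrite the denominator,
$$
h_0 \;\geq\; \frac{1}{[\omega(0,e_{-1})+\omega(0,e_{-2})] + a(1-c) + b(1-d)}.
$$
Note that the naive further bound $a(1-c)\le 1-c$, $b(1-d)\le 1-d$ yields only a convergent annealed expectation, since the three resulting independent summands have singularities $\alpha_\urcorner$, $\beta_\vdash$, $\beta_\perp$ whose sum is strictly larger than $1$ by the wedge relation \eqref{eq:wedgerel}. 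The whole point is that on a good event $\omega(0,e_{-1})+\omega(0,e_{-2})$ is of order $\xi_0^2$ \emph{and} $b = \omega(0,e_2)$ is itself of order $\xi_0$, which is the extra gain we need.

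Accordingly, I would condition on the joint event $A$ that vertex $0$ is of the type in Figure 2.2 that makes $e_1$ dominant, vertex $(1,0)$ of the type that makes $e_{-1}$ dominant, and vertex $(0,1)$ of the type that makes $e_{-2}$ dominant; this event has probability $1/27$ by the independence of the $U_x$'s. On $A$, by construction of the environment there is a constant $C>0$ such that $\omega(0,e_{-1})+\omega(0,e_{-2}) \leq C\xi_0^2$, $1-c \leq C\chi_{(1,0)}$, $1-d \leq C\zeta_{(0,1)}$, $a\leq 1$ and $b\leq \xi_0$. Substituting into the lower bound on $h_0$,
$$
h_0\,\mathbb{1}_A \;\geq\; \frac{c_0}{\xi_0^2 + \chi_{(1,0)} + \xi_0\,\zeta_{(0,1)}}\,\mathbb{1}_A
$$
for some absolute $c_0>0$. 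By independence of the three vertices, the problem reduces to the analytic statement
$$
I \;:=\; \int\!\!\int\!\!\int \frac{f(\xi)\,g(\zeta)\,h(\chi)}{\xi^2 + \chi + \xi\zeta}\,d\xi\,d\zeta\,d\chi \;=\; \infty.
$$

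To evaluate $I$ near the origin I would integrate $\chi$ first, using the elementary estimate $\int_0^{1/8}\chi^{\beta_\vdash-1}/(A+\chi)\,d\chi \asymp A^{\beta_\vdash-1}$ as $A\to 0^+$ (valid since $\beta_\vdash\in(0,1)$), applied with $A = \xi^2+\xi\zeta$, and then restrict the remaining integration to $\{\zeta\geq\xi\}$, on which $A \asymp \xi\zeta$, to obtain
$$
I \;\gtrsim\; \int_0^{1/8}\!\int_0^{\zeta} \xi^{\beta_\dashv + \beta_\vdash - 2}\,\zeta^{\beta_\perp + \beta_\vdash - 2}\,d\xi\,d\zeta.
$$
The inner $\xi$-integration is finite at $0$ by the edge relation $\beta_\dashv+\beta_\vdash>1$ of \eqref{eq:rel} and produces a factor proportional to $\zeta^{\beta_\dashv+\beta_\vdash-1}$, so $I$ is bounded below by a one-variable integral with exponent $\beta_\dashv+2\beta_\vdash+\beta_\perp-3$. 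This exponent is strictly less than $-1$ precisely by the failing relation $\beta_\dashv/2 + \beta_\perp/2 + \beta_\vdash < 1$ of \eqref{eq:rel} (multiplied by $2$), so $I=\infty$ and hence $E_0[T_W]\geq \E[h_0\,\mathbb{1}_A] = \infty$. The main obstacle is identifying both smallness mechanisms at vertex $0$ simultaneously, since only their combination produces the extra factor of $\xi$ in front of $\zeta$ in the denominator, and without that factor the same integral would converge on account of the three wedge singularities alone.
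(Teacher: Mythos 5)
Your proof is correct and follows essentially the same strategy as the paper: you condition on the same joint event $A$ on the three vertices and show the resulting annealed integral diverges because $(\beta_\dashv + \beta_\perp)/2 + \beta_\vdash < 1$. The only cosmetic differences are that you solve the one-step linear system for $E_{0,\omega}[T_W]$ rather than lower-bounding by returns to $0$ via a quenched geometric variable (note that your $1-ac-bd$ is exactly $P_{0,\omega}[T_W < H_0^+]$, so the two computations coincide), and you evaluate the divergent integral directly rather than through the tail estimate $\E[X]=\int_0^\infty\P(X>u)\,\mathrm{d}u$.
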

\begin{proof} Let $N_W(0)$ denote the number of visits to $0$ before leaving $W$. Clearly, we have that $\exittime{W} \ge N_W(0)$.
	On the other hand, under the quenched measure $P_{0,\omega}$, $N_W(0)$ can be written as $1 + Geo(P_{0,\omega}\left[\exittime{W} < H_0^+\right])$ where the geometric random variable is supported on $\{0,1,2,\dots\}$ and $H_0^+$ is the first return time to $0$. 

	Now, let $A$ be the environment in which we assign to $0$ a type I transition, to $(0,1)$ a type II and to $(1,0)$ a type III transition. Formally,
	$$
		A = \{ U_0 \le 1/3, U_{(1,0)} \ge 2/3, 1/3 \le U_{(0,1)} \le 2/3 \}
	$$
	Thus,
	\begin{equation}
		\mathbb{1}_A P_{0,\omega}\left[\exittime{W} < H_0^+\right] \le \mathbb{1}_A (2\xi^2_0 + 3\xi_0\zeta_{(0,1)} + 3\chi_{(1,0)} ),
	\end{equation}
	which implies that
	\begin{equation}\label{eq:b1}
		E_0 \left[N_W(0)\right] \ge \E \left[ \frac{1}{2\xi^2_0 + 3\xi_0\zeta_{(0,1)} + 3\chi_{(1,0)}} ; A \right] = \frac{1}{9} \int_{\Omega} \frac{1}{2\xi^2_0 + 3\xi_0\zeta_{(0,1)} + 3\chi_{(1,0)}} \mathrm{d}\P.
	\end{equation}
	Now, observe that 
	\begin{equation*}
		\P \left( \frac{1}{2\xi^2_0 + 3\xi_0\zeta_{(0,1)} + 3\chi_{(1,0)}} > u \right) = \P \left( 2\xi^2_0 + 3\xi_0\zeta_{(0,1)} + 3\chi_{(1,0)} < \frac{1}{u} \right). 
	\end{equation*}
	And by the independence of $\xi_0, \zeta_{(0,1)}$ and $\chi_{(1,0)}$ we have that  
	\begin{equation}\label{eq:lowerboundp}
		\begin{split}
			\P \left( 2\xi^2_0 + 3\xi_0\zeta_{(0,1)} + 3\chi_{(1,0)} < \frac{1}{u} \right) & \ge \P \left( 2\xi^2_0 < \frac{1}{3u}, 3\xi_0\zeta_{(0,1)} < \frac{1}{3u}, 3\chi_{(1,0)} < \frac{1}{3u} \right) \\
			& =  \P \left( \xi_0 < \frac{1}{\sqrt{6u}}, \xi_0\zeta_{(0,1)} < \frac{1}{9u} \right) \P \left(  \chi_{(1,0)} < \frac{1}{9u} \right).
		\end{split}
	\end{equation}
	Since $\chi_{(1,0)}$ has density $h$, there exists a positive constant $C_3'$ such that 
	\begin{equation}\label{eq:chi}
		\P \left(  \chi_{(1,0)} < \frac{1}{9u} \right) =  \frac{C_3'}{u^{\beta_{\vdash}}}.
	\end{equation}
	On the other hand, using the independence of $\xi_0$ and  $\zeta_{(0,1)}$ and that $\beta_{\dashv} > \beta_\perp$, there exist positive constants $C_1, C_1'$  and $C_2'$ such that
	\begin{equation}\label{eq:xi}
		\begin{split}
			\P \left( \xi_0 < \frac{1}{\sqrt{6u}}, \xi_0\zeta_{(0,1)} < \frac{1}{9u} \right) & = C_1\int_{0}^{1/\sqrt{6u}} \P \left( \zeta_{(0,1)} < \frac{1}{9ux} \right)x^{\beta_\dashv - 1}\mathrm{d}x \\ 
			& = C_1'u^{-\beta_\perp}\int_{0}^{1/\sqrt{6u}} x^{{\beta_\dashv - 1 - \beta_\perp}}\mathrm{d}x \\
			& =  C_2'u^{-\beta_\perp} \cdot u^{-(\beta_\dashv - \beta_\perp)/2} = \frac{C_2'}{u^{(\beta_\dashv + \beta_\perp)/2}}.
		\end{split}
	\end{equation}
	Finally, replacing \eqref{eq:chi} and \eqref{eq:xi} on \eqref{eq:lowerboundp} leads us to
	$$
	\int_{\Omega} \frac{1}{2\xi^2_0 + 3\xi_0\zeta_{(0,1)} + 3\chi_{(1,0)}} \mathrm{d}\P \ge \int_0^{\infty} \frac{C}{u^{(\beta_\dashv + \beta_\perp)/2 + \beta_{\vdash}}}\mathrm{d}u = \infty,
	$$
	since by \eqref{eq:rel} we have that $(\beta_\dashv + \beta_\perp)/2 + \beta_{\vdash} < 1$. Thus, by \eqref{eq:b1}, we prove the proposition.
\end{proof}
\medskip

\section{Proof of Theorems \ref{thm:X}, \ref{thm:BR}, \ref{thm:cltX2}
  and \ref{thm:cltBR}}
\label{section-two}
The first step towards the proof of Theorems
\ref{thm:X}, \ref{thm:BR}, \ref{thm:cltX2} and \ref{thm:cltBR},
is to reduce the proof to the task of obtaining good attainability estimates. Once this has been done, the rest of the argument is to prove that the local conditions $(E)_0$ and $\BR{a}{\eta_*}$ imply that the walk is capable of escaping growing regions of $\Z^d$ fast enough.

\medskip

\subsection{Attainability estimate}
Here  we make precise what is meant by an environment to have good attainability. 
For any subset $A\subset\mathbb Z^d$, we define the exit time 
of $A$ by 

$$
\exittime{A}:=\inf\{n\ge 0: X_n\notin A\}.
$$
Furthermore, we define the hitting times

$$
\hittime{A}:=\inf\{n\ge 1: X_n\in A\} 
$$
and

$$
\hittime{A}^+:=\inf\{n\ge 1: X_n\in A\}. 
$$

\medskip 

\begin{definition}[$b$-good attainability] Let $b>0$. We say a random environment on $\Z^d$ has $b$-\textit{good attainability} and denote it by $(A)_b$ if there exists $\varepsilon>0$ such that for all~$\delta>0$ there is a~$\delta'>0$ and $u_0$ such that, for 
  all $u\ge u_0$ we have that

	\begin{equation}\label{def:attain}
	\P\left( \max_{y: |y|= \delta' \log u}P_{0, \omega}\left( \hittime{y} < \hittime{0}^+\right) \le u^{-\frac{b+2\delta}{b+\varepsilon}}\right) \le \frac{1}{u^{b+\delta}}. 
	\end{equation}
      \end{definition}

      \medskip 
      
      Notice that the above condition is not local in nature, since it involves escaping a ball whose radius is going to infinity. In what follows we recall the connection between upper bound on the tail of the first regeneration time $\tau_1$ and $(A)_a$. To 
      do this we will first define the concept of regeneration times. 
      Let $(\mathcal F_n)_{n\ge 0}$ be the natural filtration of the random walk and 
      $(\theta_n)_{n\ge 0}$ the canonical shift in $(\mathbb Z^d)^{\mathbb N}$. 
      Let $l\in\mathbb S^{d-1}$ and $a>0$. Define 

      $$
\bar T_a=\min\{k\ge 1: X_k\cdot l\ge a\}
$$
and 

$$
D=\min\{m\ge 0: X_m\cdot l<X_0\cdot l\}. 
$$
We now define two sequences of $\mathcal F_n$-stopping times $(S_n)_{n\ge 0}$
and $(D_n)_{n\ge 0}$. Let $S_0=0$, $R_0=X_0\cdot l$ and $D_0=0$. 
Now, define by induction in $k\ge 0$, 

\begin{eqnarray*}
&S_{k+1}=\bar T_{R_k+1},\\
  & D_{k+1}=D\circ\theta_{S_{k+1}}+S_{k+1}\\
  & R_{k+1}=\sup\{X_i\cdot l:0\le i\le D_{k+1}\}. 
\end{eqnarray*}
Let 

$$
K=\inf\{n\ge 0: S_n<\infty, D_n=\infty\}
$$
with the convention that $K=\infty$ when $\{n:S_n<\infty, D_n=\infty\}=\emptyset$. 
We define the first regeneration time by 

$$
\tau_1=S_K. 
$$
Observe that the bound provided in the theorem below is as good as the
one given by the attainability property. The following result, which
corresponds to Proposition 5.1 of \cite{FK16} (see also \cite{CR14}),
shows
how
an attainability estimate provides bounds on the tails of the
first regeneration time.

\medskip 
\begin{theorem}\label{thm:framework} Consider an RWRE satisfying in an environment conditions $(E)_0$, $(A)_b$, $(P)_M^\ell$ for some $M>15d+5$, $b>0$ and $\ell \in S^{d-1}$ . Then, there exist $\delta>0$ and $u_0>0$ such that for  $u\ge u_0$, 
	$$
	P_0\left( \tau_1 > u \right) \le u^{-(b+\delta)}. 
	$$
      \end{theorem}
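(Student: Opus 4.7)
The plan is to follow the approach of Proposition 5.1 in \cite{FK16}, adapted to the formulation of $(A)_b$ used here. The event $\{\tau_1 > u\}$ decomposes into two mechanisms: either the walk has progressed by a distance of order $\log u$ in direction $\ell$ but then backtracked far enough to spoil the regeneration, or it has simply failed to move that far by time $u$.

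For the backtracking mechanism, the polynomial condition $(P)_M^\ell$ with $M > 15d+5$ handles matters as in \cite{Sz02} and \cite{BDR14}: once the walk reaches level $c\log u$ in direction $\ell$, the probability of ever backtracking to level $0$ is at most a power of $(\log u)^{-M}$, which is much smaller than $u^{-(b+\delta)}$ for any fixed $b,\delta$. Hence this mechanism contributes a negligible term.

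For the ``stuck near origin'' mechanism, I would split on the environment. Let $G_u$ denote the good-environment event appearing in \eqref{def:attain}. On $G_u^c$ the bound is immediate: $\P(\tau_1 > u, G_u^c) \le \P(G_u^c) \le u^{-(b+\delta)}$. On $G_u$ there exists $y$ with $|y| = \delta' \log u$ and $P_{0,\omega}(\hittime{y} < \hittime{0}^+) \ge u^{-(b+2\delta)/(b+\varepsilon)}$. Under $P_{0,\omega}$, the number of excursions from $0$ before the walk hits $y$ is stochastically dominated by a geometric variable with this success probability, so the walk reaches $y$ within roughly $u^{(b+2\delta)/(b+\varepsilon)}$ excursions with overwhelming probability. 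Using $(E)_0$ together with standard moment bounds on excursion lengths (exit time from a ball of radius $\delta' \log u$), the total time before reaching $y$ is at most $u$ except on a set of $P_0$-probability at most $u^{-(b+\delta)}$. After reaching $y$, the backtracking estimate of the previous paragraph completes the construction of a regeneration.

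The main obstacle is the careful parameter matching in this last step: the exponent $(b+2\delta)/(b+\varepsilon)$ must be strictly less than $1$ so that the number of excursions is $o(u)$, while the moments controlling single-excursion lengths must be large enough for the total-time estimate to deliver a $u^{-(b+\delta)}$ bound. This is where $(E)_0$ enters quantitatively, and where $M > 15d+5$ furnishes the required tail control on transverse displacements of excursions in the Sznitman–Berger–Drewitz–Ram\'\i rez framework. Combining the three contributions—bad environment, failure to reach $y$ in time $u$ on a good environment, and backtracking after $y$—and choosing $\delta$ small relative to $\varepsilon$ yields the desired tail bound on $\tau_1$.
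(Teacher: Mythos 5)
The paper does not actually prove Theorem \ref{thm:framework}: it is stated as a restatement of Proposition 5.1 in \cite{FK16} (cf.\ also \cite{CR14}), so there is no proof in the paper to compare against. Your sketch follows that same source in outline --- the two-mechanism decomposition into ``progressed but backtracked'' and ``failed to progress'' is indeed how \cite{FK16} proceeds --- but two steps are wrong or misattributed as written.

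First, the backtracking estimate. You claim that once the walk reaches level $c\log u$ in direction $\ell$, the probability of returning to level $0$ is ``at most a power of $(\log u)^{-M}$, which is much smaller than $u^{-(b+\delta)}$.'' This is backwards: $(\log u)^{-M}$ decays only polynomially in $\log u$, which is far \emph{larger} than $u^{-(b+\delta)}=\exp(-(b+\delta)\log u)$, so a direct $(\log u)^{-M}$ bound from the slab estimate would be useless. The polynomial-in-$u$ decay in this part of the argument comes instead from the regeneration structure: under $(E)_0$ and $(P)_M$ (hence $(T')$) one has $P_0(D=\infty)>0$, so the probability that none of $C\log u$ successive candidate renewal levels produces a regeneration is bounded by a geometric quantity $\le u^{-cC}$, with $C$ free to be taken large. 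That is the mechanism, and it requires a different bookkeeping than what you wrote.

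Second, the total-time estimate misattributes the hypothesis. You write that $(E)_0$ together with ``standard moment bounds on excursion lengths'' yields that the time before reaching $y$ exceeds $u$ with probability $\le u^{-(b+\delta)}$. But $(E)_0$ only controls a negative moment of the jump probability at a single site; it gives no bound on the length of a full excursion. The argument in \cite{FK16} bounds the time spent in the box of radius $\delta'\log u$ by the sum over sites $z$ in that box of the visit counts, each of which is geometric with success parameter $P_{z,\omega}(\exittime{B}<\hittime{z}^+)$. This parameter is controlled by the attainability estimate applied at $z$ (which holds by translation invariance of the i.i.d.\ law), and a union bound over the $O((\log u)^d)$ sites gives a bad-environment probability $\le C(\log u)^d u^{-(b+\delta)}$, absorbed by slightly shrinking $\delta$. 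On the good event the total time is of order $(\log u)^d u^{(b+2\delta)/(b+\varepsilon)}=o(u)$, precisely because $(b+2\delta)/(b+\varepsilon)<1$. So it is the attainability hypothesis $(A)_b$, applied at every site visited and not just at the origin, that delivers the time bound; $(E)_0$ plays a different (and more preliminary) role.
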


      \medskip
      A combination of the above result with Theorem 1.1 in
      \cite{CR14}, shows through the following theorem, the key role played by attainability
      estimates to prove the law of large numbers  and central limit theorems. 

      \medskip
      
\begin{theorem}\label{thm:genframework} Consider an RWRE satisfying in
  an environment conditions $(E)_0$ and
  $(P)_M^\ell$ for some $M>15d+5$ and $\ell \in S^{d-1}$ . Then, 
	\begin{enumerate}[label=(\alph*)]
		\item if $(A)_1$ is satisfied, there exist a deterministic $v \neq 0$ such that 
		$$
		\lim_{n\to \infty} \frac{X_n}{n} = v. 
		$$
              \item  if $(A)_2$ is satisfied, then the random walk
                satisfies
                  both an annealed and a quenched
                  central
                  limit theorem.
	\end{enumerate}
\end{theorem}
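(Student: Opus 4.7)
\medskip

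\textbf{Proof proposal.} The plan is to reduce both statements to moment bounds on the first regeneration time $\tau_1$ and then invoke the classical regeneration-structure machinery, exactly as in Theorem 1.1 of \cite{CR14}. Since $(E)_0$ and $(P)_M^{\ell}$ are among the hypotheses in both (a) and (b), Theorem \ref{thm:framework} applies: under $(A)_b$ there exist $\delta>0$ and $u_0>0$ such that
\begin{equation}\label{eq:tailtau1}
P_0(\tau_1 > u) \le u^{-(b+\delta)} \qquad \text{for all } u\ge u_0.
\end{equation}
Hence in case (a), applying \eqref{eq:tailtau1} with $b=1$ gives $E_0[\tau_1]<\infty$, and in case (b), applying it with $b=2$ gives $E_0[\tau_1^2]<\infty$.

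Next I would recall the standard regeneration structure in direction $\ell$ (which exists as soon as $P_0$-a.s.\ $\lim_n X_n\cdot\ell=\infty$, a consequence of $(P)_M^{\ell}$): under the annealed law $P_0$, the increments $(X_{\tau_{k+1}}-X_{\tau_k},\tau_{k+1}-\tau_k)_{k\ge 1}$ are i.i.d.\ and distributed as $(X_{\tau_1},\tau_1)$ conditioned on $D=\infty$. By the construction of $\tau_1$ one has $X_{\tau_1}\cdot\ell>0$ a.s., so $E_0[X_{\tau_1}\cdot\ell\mid D=\infty]>0$. In case (a), the finite first moment of $\tau_1$ together with the classical ratio limit theorem for i.i.d.\ increments embedded along $\tau_k$ gives
\[
\lim_{n\to\infty}\frac{X_n}{n}=\frac{E_0[X_{\tau_1}\mid D=\infty]}{E_0[\tau_1\mid D=\infty]}=:v,\qquad v\cdot\ell>0,
\]
$P_0$-almost surely; this is the content of part (a).

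For part (b), the second moment $E_0[\tau_1^2]<\infty$ allows the invariance principle for i.i.d.\ sums to be applied to the partial sums $\sum_{k\le n}(X_{\tau_k}-X_{\tau_{k-1}}-v(\tau_k-\tau_{k-1}))$, yielding by a standard interpolation (using $\tau_k/k\to E_0[\tau_1\mid D=\infty]$ and controlling the last excursion by a Borel--Cantelli argument applied to $\tau_{k+1}-\tau_k$ thanks to \eqref{eq:tailtau1}) the annealed CLT for $X_n-nv$ with a deterministic non-degenerate covariance. The quenched CLT is then obtained exactly as in \cite{BS02, Ra03, CR14}, by combining the annealed CLT with a concentration argument: one shows that the quenched variance of $X_n\cdot\ell$ along regenerations concentrates around its annealed value, using independence between regeneration blocks relying on disjoint portions of the environment, and the moment control furnished by Theorem~\ref{thm:framework}.

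The main obstacle, as is typical in this chain of reductions, is the passage from annealed to quenched CLT: it requires controlling the fluctuations of quenched expectations of bounded functionals of a single regeneration block, and this is precisely where the full strength of the tail estimate \eqref{eq:tailtau1} with $b=2$ (a little better than second moment) is needed. All of the above is however already carried out in \cite{CR14} once one has \eqref{eq:tailtau1}; so the proof reduces to citing the relevant portion of that reference together with our Theorem \ref{thm:framework}.
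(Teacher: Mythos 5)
Your proposal follows exactly the same route as the paper: apply Theorem~\ref{thm:framework} to obtain the tail estimate $P_0(\tau_1>u)\le u^{-(b+\delta)}$ under $(A)_b$, which in case (a) yields a finite first moment and in case (b) a finite second moment for $\tau_1$, and then invoke the regeneration-structure machinery from Theorem~1.1 of \cite{CR14} to deduce the LLN and (annealed and quenched) CLT. The paper states this as a one-line combination of Theorem~\ref{thm:framework} and \cite{CR14}; you have simply unpacked that combination, which is fine.
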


\medskip

\subsection{Proof of Theorems \ref{thm:BR} and \ref{thm:cltBR}}
In the light of Theorem \ref{thm:genframework},
in order to prove  Theorem \ref{thm:BR} (respec.
 Theorem \ref{thm:cltBR}) it is enough to show that under $(E)_0$ and
 condition $\BR{1}{\eta_*}$ (respec $\BR{2}{\eta_*}$) condition
 $(A)_1$ (respec. $(A)_2$) holds. However, instead of proving it
 directly, we will take a step back and prove a more general
 result. We will prove $a$-good attainability under $\BR{1}{\eta_*}$
 and a general condition~$\conditionH$ and then prove that $(E)_0$ implies $\conditionH$.

 Before we define $\conditionH$, we recall some standard notation.
For each $R>0$, we define 
\begin{equation}
  \left\lbrace 0 \longrightarrow \partial B_R \right \rbrace := \left\lbrace \exittime{B_{R-1}} < \hittime{0}^+\right \rbrace, 
  \end{equation}
that is, the event that the walk hits $\partial B_R$ before returning to the origin.
 For
a fixed $e_i$ in the canonical basis,
write $\cal{V}_i := <e_i>^{\perp}$, that is, the hyperplane orthogonal to $e_i$. Also let 
\begin{equation}
\left\lbrace 0 \stackrel{\cal{V}_i}{\longrightarrow} \partial B_R \right \rbrace := \left\lbrace \hittime{\partial B_R} < \hittime{0}^+\wedge \exittime{\cal{V}_i}\right \rbrace, 
\end{equation}
that is, the event in which the walk hits $\partial B_R$ before
returning to the origin without leaving $\mathcal{V}_i$.

\medskip
\begin{definition}[Condition $\conditionH$]
We say that an RWRE satisfies condition $\conditionH$ if, for each
direction $e_i$ there exist positive constants $C_i$ and $\widetilde{\eta}_i$,
such that for all $q \in [0,1]$ and $R \in \mathbb{N}$ one has that
\begin{equation}\label{eq:H}
\P\left( P_{0,\omega} \left(0 \stackrel{\cal{V}_i}{\longrightarrow} \partial B_R \right) \le q \right) \le q^{\widetilde{\eta}_i}C_i^R.
\end{equation}
\end{definition}

\medskip

 Notice that it is enough for an environment to have only $2$
perpendicular
{\it good} directions in order to satisfy $\conditionH$, in the sense that, it is enough to have two orthogonal directions~$e_i$ and~$e_j$ and two positive constants $\widetilde{\eta}_i$ and $\widetilde{\eta}_j$ such that 
$$
\E \left[ \omega(0,e_i)^{-\widetilde{\eta}_i}\right] \vee \E \left[ \omega(0,e_j)^{-\widetilde{\eta}_j}\right] < \infty.
$$
Hence, an environment does not need to  satisfy $(E)_0$, not
even it has to be elliptic, in order for it to satisfy this condition. 

Our next result shows us how we can combine condition $\conditionH$ with some moment condition on $T_{B_R}$ in order to guarantee good attainability.
But before we state it, we will need an intermediate step.

\medskip

\begin{lemma}\label{lemma:aux} Consider an RWRE on $\Z^d$ satisfying
  condition $\BR{a}{b}$, for $a\ge 1$ and $b>0$. Then, there exists a
  constant $C$ depending on $a, b, c$ and $R$ (where $R$ and $c$ are
  the constants of the definition of $\BR{a}{b}$ so that $R$ 
   satisfies inequality (\ref{eq:randc}) involving also
  to $a,b$ and $c$) such that for $u\ge 1$
  $$
  \P\left(\Pq{0}\left(0 \rightarrow \partial B_{R+1} \right)\le u^{-1}\right) =  \P \left( \Pq{0}\left( \exittime{B_R} < \hittime{0}^+\right) \le u^{-1}\right) \le Cu^{-a-c}.
  $$
\end{lemma}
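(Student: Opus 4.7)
My plan is to reduce the statement to a standard Markov/Jensen argument by noticing that the quenched escape probability $P_{0,\omega}(\exittime{B_R}<\hittime{0}^+)$ controls the quenched expected number of returns to the origin before leaving $B_R$, and that this return count is itself a crude lower bound for $\exittime{B_R}$. The inequality \eqref{eq:randc} on $R$ is not actually needed for the lemma itself (it is used in the steps that follow); here only the moment condition \eqref{eq:moment} plays a role.

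More concretely, let $N_0$ be the number of visits to $0$ strictly before $\exittime{B_R}$. Under $P_{0,\omega}$, the random variable $N_0$ is geometric with success probability $p(\omega):=P_{0,\omega}(\exittime{B_R}<\hittime{0}^+)$, hence
$$
E_{0,\omega}[N_0]=\frac{1}{p(\omega)}.
$$
Since every visit to the origin counts at most once per time step before exiting, $N_0\le \exittime{B_R}$, and therefore $p(\omega)^{-1}\le E_{0,\omega}[\exittime{B_R}]$ $\mathbb P$-a.s. The event in the statement is exactly $\{p(\omega)\le u^{-1}\}$, i.e. $\{p(\omega)^{-1}\ge u\}$, so
$$
\P\bigl(p(\omega)\le u^{-1}\bigr)\le \P\bigl(E_{0,\omega}[\exittime{B_R}]\ge u\bigr).
$$

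I would then apply Markov's inequality at exponent $a+c$ (which is $\ge 1$ since $a\ge 1$ and $c>0$), and use Jensen's inequality to move the power inside the quenched expectation:
$$
\P\bigl(E_{0,\omega}[\exittime{B_R}]\ge u\bigr)\le \frac{\E\bigl[(E_{0,\omega}[\exittime{B_R}])^{a+c}\bigr]}{u^{a+c}}\le \frac{\E\bigl[E_{0,\omega}[\exittime{B_R}^{a+c}]\bigr]}{u^{a+c}}=\frac{E_0[\exittime{B_R}^{a+c}]}{u^{a+c}}.
$$
By condition $\BR{a}{b}$, the numerator is a finite constant depending only on the parameters $a,b,c,R$ of the condition, which gives the claimed bound with $C:=E_0[\exittime{B_R}^{a+c}]$.

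I do not expect any real obstacle: the whole argument is just the quenched return-time identity combined with Markov and Jensen. The only point worth a sentence of care is checking that the event $\{0\to\partial B_{R+1}\}$ is literally $\{\exittime{B_R}<\hittime{0}^+\}$ (true because the walk has nearest-neighbour steps, so the first site outside $B_R$ lies on $\partial B_{R+1}$), and that $a+c\ge 1$ so that Jensen applies in the direction we want.
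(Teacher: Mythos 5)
Your argument is correct and follows the same route as the paper's: compare the quenched escape probability to a geometric return count, dominate that count by $T_{B_R}$, then apply Markov and Jensen together with the moment bound from condition $\BR{a}{b}$. Your choice of counting \emph{visits} to $0$ (including time $0$) rather than \emph{returns} gives $E_{0,\omega}[N_0]=1/p(\omega)$ directly and so cleanly bypasses the paper's extra step of splitting on $\{p(\omega)\le 1/2\}$ to pass from $(1-p)/p$ to $1/p$.
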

\begin{proof} Let $R$ and $c$ in condition $\BR{a}{b}$ be fixed and
  denote by
  $N_{B_R}(0)$ the number of returns to the origin before leaving $B_R$. 
Observe that $T_{B_R}$ is greater than $N_{B_R}(0) $ almost
surely. Moreover, by
the strong Markov property it follows that $N_{B_R}(0)$ has the same
law as a geometric random variable of parameter $\Pq{0}\left( \exittime{B_R} < \hittime{0}^+\right)$ supported on $\{0,1,\dots\}$, under the quenched measure $\Pq{0}$.
Combining the above discussed with condition $\BR{a}{b}$ and Jensen's inequality
\begin{equation*}
  \begin{split}
    \E \left[\left(\frac{1-\Pq{0}\left( \exittime{B_R} < \hittime{0}^+\right)}{\Pq{0}\left( \exittime{B_R} < \hittime{0}^+\right)} \right)^{a+c}\right] & = \E\left[\left(E_{0,\omega} N_{B_R}(0)\right)^{a+c}\right]  \le E_0 N_{B_R}^{a+c}(0) \le E_0 T_{B_R}^{a+c} < \infty.
  \end{split}
\end{equation*}
From the above inequality it follows that 
\begin{eqnarray*}
&\E \left[\left(\frac{1}{\Pq{0}\left( \exittime{B_R} <
        \hittime{0}^+\right)
    } \right)^{a+c}
\right]\\
&=
\E \left[\left(\frac{1}{\Pq{0}\left( \exittime{B_R} <
            \hittime{0}^+\right)} \right)^{a+c}, 
            \Pq{0}\left( \exittime{B_R} <
        \hittime{0}^+\right)\le 1/2 
\right]\\
&+
\E \left[\left(\frac{1}{\Pq{0}\left( \exittime{B_R} <
        \hittime{0}^+\right)} \right)^{a+c},\Pq{0}\left( \exittime{B_R} <
            \hittime{0}^+\right)> 1/2\right]\\
  &\le
    2^{a+c}\E \left[\left(\frac{1-
\Pq{0}\left( \exittime{B_R} <
    \hittime{0}^+\right)
    }{\Pq{0}\left( \exittime{B_R} <
    \hittime{0}^+\right)
    } \right)^{a+c}, 
            \Pq{0}\left( \exittime{B_R} <
        \hittime{0}^+\right)\le 1/2 
\right]+2^{a+c}<\infty,
\end{eqnarray*}
which combined with Markov Inequality proves the lemma.

\end{proof}

Now we can prove the following proposition.

\begin{proposition}\label{prop:BR} Consider an RWRE on $\Z^d$
  satisfying condition $\conditionH$. Additionally let $a \ge 1$ and~$b=\min\{\widetilde{\eta}_i:1\le i\le d\}$ and assume that condition $(B)_a^{b}$ is satisfied.
   Then, there exist $\delta$ and $\varepsilon$, such that
\begin{equation}
\P\left(\max_{y \in \partial B_{\delta\log u}} \Pq{o}\left( \hittime{y} < \hittime{x}^+\right) \le u^{-1}\right) \le u^{-a-\varepsilon},
\end{equation}
for all $u$ sufficiently large. In words, under $\conditionH$ and $(B)_a^b$, the walk has $a$-good attainability.
\end{proposition}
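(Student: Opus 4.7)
The plan is to decompose, via the strong Markov property at $\exittime{B_R}$, the event of reaching some $y$ with $|y|=\delta'\log u$ before returning to $0$ into two nearly independent pieces: escaping $B_R$ (controlled by $(B)_a^b$ through Lemma~\ref{lemma:aux}) and continuing from the exit point $z\in\partial B_{R+1}$ to $\partial B_N(z)$ with $N=\delta'\log u$ through a hyperplane that avoids $0$ (controlled by~$\conditionH$). The geometric key is that every $z\in\partial B_{R+1}$ has some coordinate $z\cdot e_i=\pm(R+1)\ne 0$, so the translated hyperplane $\cal{V}_i^z:=\{y:y\cdot e_i=z\cdot e_i\}$ does not contain $0$. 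A walk from $z$ confined to $\cal{V}_i^z$ therefore never hits $0$ and uses $\omega$-variables disjoint from those in $B_R$, giving honest independence of the two pieces by the i.i.d.~assumption.

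Concretely I would apply Lemma~\ref{lemma:aux} with threshold $u^{-\alpha}$ in place of $u^{-1}$ to obtain
\[
\P\bigl(P_{0,\omega}(\exittime{B_R}<\hittime{0}^+)\le u^{-\alpha}\bigr)\le C\,u^{-\alpha(a+c)},
\]
and condition~$\conditionH$ shifted to $z$ with radius $N=\delta'\log u$ and threshold $u^{-s}$ to obtain
\[
\P\bigl(P_{z,\omega}(z\stackrel{\cal{V}_i^z}{\longrightarrow}\partial B_N(z))\le u^{-s}\bigr)\le u^{-s\widetilde\eta_i+\delta'\log C_i}.
\]
Strong Markov at $\exittime{B_R}$ together with pigeonhole over the $O(R^{d-1})$ exit points of $\partial B_{R+1}$ and the $O((\log u)^{d-1})$ candidate targets on $\partial B_N(z)\cap\cal{V}_i^z$ then give, on the joint good-environment event, $\max_{|y|=\delta'\log u}P_{0,\omega}(H_y<H_0^+)\ge u^{-(\alpha+s)-o(1)}$, with total bad-environment probability at most $C'u^{-\alpha(a+c)}+C'u^{-sb+\delta'\log C}$ and $b=\min_i\widetilde\eta_i$.

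The optimization step is to choose $\alpha=(a+\delta)/(a+c)$ and $s=(a+\delta)/b+O(\delta')$, which makes both bad probabilities at most $u^{-(a+\delta)}$ and produces an attainability exponent of order $(a+\delta)(a+b+c)/[b(a+c)]+O(\delta')$. The hard part will be fitting this exponent strictly below $(a+2\delta)/(a+\varepsilon)$ for some $\varepsilon>0$: naively this asks for $a(a+c)<bc$, which can fail when $b\le a$, so one must exploit the polynomial-in-$R$ combinatorial gain (from the $\asymp R^{d-1}$ exit points on $\partial B_{R+1}$ and the independence of hyperplane environments across the $2d$ faces of $\partial B_{R+1}$) in order to convert the quantitative relation~\eqref{eq:randc}, namely $R>a(a+c)/(bc)-2$, into the required attainability exponent. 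Threading this combinatorial gain cleanly through the tail estimates is the principal technical task.
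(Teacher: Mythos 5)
You correctly identify the two-part decomposition (escape $B_R$ via Lemma~\ref{lemma:aux}, then travel through a hyperplane via condition~$\conditionH$), and you are right that the naive combination gives the constraint $a(a+c)<bc$, which fails in general, so a polynomial-in-$R$ gain must be extracted. The gap is in where you look for that gain. You propose to exploit ``the $\asymp R^{d-1}$ exit points on $\partial B_{R+1}$ and the independence of hyperplane environments across the $2d$ faces,'' but neither source delivers a factor that grows with $R$ in the exponent: all exit points on a single face $\{y\cdot e_i=\pm(R+1)\}$ escape through that \emph{same} hyperplane, so they contribute no independence; and the $2d$ faces give at most a factor $2d$, not $\asymp R$. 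By anchoring the hyperplane to the exit point $z$ of $B_R$, you have tied yourself to exactly one hyperplane per face.

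The paper instead looks at the hyperplanes \emph{crossed along the way}, not the one at the exit site. For each direction $e_i$ it considers the $R+2$ parallel translates $\mathcal{V}_i+me_i$, $m\in\{0,\dots,R+1\}$, which occupy disjoint slabs of $\Z^d$ and hence give genuinely independent ``bad hyperplane'' events under the i.i.d.\ law. A direction is declared bad only if \emph{every} one of its $R+2$ translates is bad, so
$\P(\text{direction bad})=\P(\mathcal{V}_i\text{ bad})^{R+2}$, and this $(R+2)$-th power is precisely the mechanism that converts the hypothesis $R>a(a+c)/(bc)-2$ into $\P(\text{direction bad})\le u^{-(a+\varepsilon')}$. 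The geometric observation that makes this usable is that any path from $0$ that exits $B_R$ must traverse all $R+2$ consecutive $e_i$-levels for whichever $i$ achieves $|X_{T_{B_R}}\cdot e_i|=R+1$, so on the good event the walk is guaranteed to hit a good hyperplane before it exits, from which $\conditionH$ carries it to $\partial B_{\delta\log u-R}$. Without replacing your ``exit-point hyperplane'' by this ``crossed-translates'' construction, your approach cannot reach the required exponent when $b\le a$, so the step you flag as ``the principal technical task'' is not a matter of bookkeeping but a genuine missing idea.
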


\medskip
Before we prove the result, let us say some words about its statement
and why it is important. The above proposition says that under
$\conditionH$, in order to guarantee that the walk is capable of
reaching distance
$\delta\log u$ with a    high enough probability, it is enough to
analyze its
behavior inside a ball of radius $R$. Observe that \eqref{eq:randc}
gives some sort of trade-off to check \eqref{eq:moment}. If we want to
check \eqref{eq:moment} for a small $c$,  then we need to consider a
large radius $R$. On the other hand, if we want to obtain a condition
verifiable on a small box, then we must guarantee that the walk
escapes this small box fast enough, i.e., $\escball{R}$ has high
$P_0$-moments.

\begin{proof} Let us explain the idea of the argument which is similar
  to some
  methods that were already used in \cite{CR14}. We first
  guarantee that with high probability, $B_R$  will be crossed in all
  directions by \textit{good} hyperplanes.
  In this case, \textit{good} means that it  will not be too costly
  in terms of probability, for the  walk to go through these
  hyperplanes. Then, \eqref{eq:moment} guarantees
  that there exists a \textit{good} path going from the origin to the
  boundary of $B_{R+1}$.
  Thus we can use this path to reach some good hyperplane that leads
  us to the boundary of the larger box
  $B_{\delta \log u}$. The picture below is an illustration of the
  above strategy for the case $d=2$. 
	\begin{figure}[h]
		\centering 
		\includegraphics[width=0.7\linewidth]{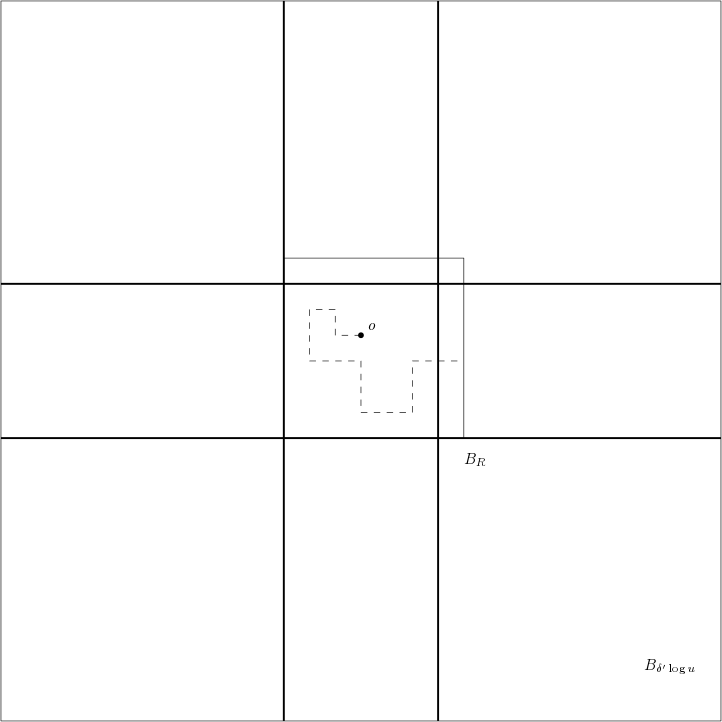}
		\caption{Good hyperplanes (strong lines) crossing the ball $B_R$ and a good path (dashed) from $o$ to $\partial B_R$}
		\label{fig:circuit}
	\end{figure}

	Fix $e_i$ in the canonical basis. Observe that $|\mathcal{V}_i
        \cap B_R| = (2\lfloor R\rfloor +1)^{d-1}$.  Let
        $\delta,\delta'>0$. We will say that a point of
        $x \in \mathcal{V}_i \cap B_{R+1}$  is $(\delta,\delta')$-\textit{bad} if, for a
        small $\delta'$ to be chosen latter

	\[
	P_{x,\omega} \left( x \stackrel{\mathcal{V}_i}{\longrightarrow} \partial B_{\delta\log u}(x) \right) \le u^{-\delta'}. 
      \]
	We will also say that the hyperplane $\mathcal{V}_i$ is $(\delta,\delta')$-\textit{bad} if there
        is some $x \in \mathcal{V}_i \cap B_{R+1}$ which is $(\delta,\delta')$-bad. Thus,
        using the fact that the environment is i.i.d., condition
        $\conditionH$ for direction $e_i$ and the union bound, we have 
	\begin{equation}\label{ineq:badplane}
	\P\left( \mathcal{V}_i \text{ is bad }\right) \le (2(R+1))^{d-1}\P\left( P_{0,\omega} \left( 0 \stackrel{\mathcal{V}_i}{\longrightarrow} \partial B_{\delta\log u} \right) \le u^{-\delta'}\right) \le \frac{(2(R+1))^{d-1}C_i^{\delta\log u}}{u^{\delta'\widetilde{\eta}_i}}.
	\end{equation}
	Finally, we say that direction $e_i$, $1\le i\le 2d$, is $(\delta,\delta')$-\textit{bad} if
        $\mathcal{V}_i +me_i$ is $(\delta,\delta')$-bad for all $m \in \{0,\dots,
        R+1\}$. Using again the fact that the environment is i.i.d. we
        see that

	\begin{equation}\label{ineq:baddirection}
	\P\left( \text{direction }e_i \text{ is bad }\right) = \P\left( \bigcap_{m=0}^{R+1}\{\mathcal{V}_i + me_i \text{ is bad} \}\right) = \P\left( \mathcal{V}_i \text{ is bad }\right)^{R+2}. 
	\end{equation}
	Now observe, from Equation \eqref{ineq:badplane}, that by setting 
	\begin{equation}\label{def:delta}
	\delta = \frac{\delta'}{2}\min_i\left\lbrace \widetilde{\eta}_i\right\rbrace / \max_i \{\log C_i \},
      \end{equation}
we see that for any direction $i$ 
	$$
	\P\left( \mathcal{V}_i \text{ is bad }\right) \le \frac{C_{R,d}}{u^{\delta'\min_i\left\lbrace \widetilde{\eta}_i\right\rbrace/2}}, 
	$$
	 where $C_{R,d}$ is a positive constant depending on $R$ and
         the dimension $d$ only.  Notice that we tacitly assumed $\log
         C_i$ is positive for all $C_i$ in \eqref{def:delta}. This is
         possible because we can assume $C_i >1$, since this only
         makes \eqref{ineq:badplane} worse. Thus, returning to
         \eqref{ineq:baddirection}  and recalling that $b = \min_i \widetilde{\eta}_i$, we have 
	\begin{equation}\label{eq:baddirect2}
	\P\left( \text{direction }e_i \text{ is bad }\right) \le
        \frac{C^{R+2}_{R,d}}{u^{\delta'b(R+2)/2}} \le \frac{1}{u^{a+\varepsilon'}}, 
	\end{equation}
	for some $\varepsilon'$, provided $u$ is large enough and $\delta'b(R+2)/2 >a$ and $a\ge 1$. 
        Now,  condition~$\BR{a}{b}$ and Lemma \ref{lemma:aux} leads us to, 
	\begin{equation}\label{ineq:pathK}
	\P\left(P_{0,\omega}\left(0 \rightarrow \partial B_{R+1} \right) \le \frac{1}{u^{1-c/K}}\right) \le \frac{1}{u^{(1-c/K)(a+c)}} = \frac{1}{u^{a+\varepsilon}}
	\end{equation}
	whenever $K>a+c$ and $u$ is large enough.
        Now, we choose some $\delta'$ such that 
	\begin{equation}
	\frac{a}{b(R+2)} < \delta' < \frac{c}{a+c}, 
      \end{equation}
    	and $K$ such that $ a+c<K<  c/\delta'$. These choices
        of $K$ and $\delta'$
        are possible due to \eqref{eq:randc}.

	Now, define the events
	\[
	A_1 := \{\text{ all the 2\textit{d} directions are}\ (\delta,\delta')-\text{good}\}
	\]
	and 
	\[
	A_2 := \left \lbrace P_{0,\omega}\left( 0 \rightarrow \partial B_{R+1} \right) \ge \frac{1}{u^{1-c/K}}\right \rbrace. 
	\]
	Note that 
	\begin{equation}
	\P\left( P_{0, \omega}\left(0 \rightarrow \partial B_{\delta\log (u)-R}\right) \le u^{-1}, A_1, A_2 \right) = 0, 
	\end{equation}
	since the probability of going from $0$ to some good (affine)
        hyperplane is at least $1/u^{1-c/K}$ and the probability of
        going from the hyperplane to $\partial B_{\delta \log (u)-R}$
        is at least $1/u^{\delta'}$, but recall that we have chosen $\delta'$ in such way that $\delta' < c/K$. Moreover, by \eqref{eq:baddirect2} and \eqref{ineq:pathK} we have 
	\[
	\P\left(A_1^c \cup A_2^c \right) \le \frac{1}{u^{a+\varepsilon''}}, 
	\]
	for large enough $u$ and some positive $\varepsilon''$. By
        intersecting $\{P_{0, \omega}\left( 0 \rightarrow \partial
          B_{\delta\log(u) -R}\right) \le u^{-1}\}$ with the event
        $A_1\cap A_2$ and its complement $A_1^c\cup A_2^c$ we prove the proposition.         \end{proof}
\medskip

The next lemma guarantees that $(E_0)$ implies $\conditionH$. 
\begin{lemma}\label{lemma:mineta}Consider a random environment on
  $\Z^d$ satisfying condition $(E)_0$. Then it satisfies condition
  $\conditionH$  in a way that $\min_i \widetilde{\eta}_i
    \ge \eta_{*}$, where $\eta_{*}$ is defined
    in~\eqref{def:eta*}. 
\end{lemma}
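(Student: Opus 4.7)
The plan is to lower-bound $P_{0,\omega}\!\left(0 \stackrel{\mathcal{V}_i}{\longrightarrow} \partial B_R\right)$ by the quenched probability of a single deterministic nearest-neighbor path that lies in $\mathcal{V}_i$ and runs from the origin to $\partial B_R$ in exactly $R$ steps. For each direction $e_i$, I will choose an orthogonal axis direction $e_{j(i)}$ with $j(i) := \arg\max\{\eta_j : e_j \perp e_i\}$; condition $(E)_0$ ensures $\eta_{j(i)} > 0$, and such a choice exists for every $i$ because $d \ge 2$.

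First I observe that the straight path $0,\, e_{j(i)},\, 2 e_{j(i)},\, \ldots,\, R e_{j(i)}$ has zero $i$-th coordinate at every visited site, never revisits $0$, and terminates on $\partial B_R$. Hence this realization sits inside the event $\{\hittime{\partial B_R} < \hittime{0}^+ \wedge \exittime{\mathcal{V}_i}\}$, so
\begin{equation*}
P_{0,\omega}\!\left(0 \stackrel{\mathcal{V}_i}{\longrightarrow} \partial B_R\right) \;\ge\; \prod_{k=0}^{R-1} \omega\bigl(k e_{j(i)},\, e_{j(i)}\bigr).
\end{equation*}
Next, since $\P$ is i.i.d., the factors on the right are independent copies of $\omega(0, e_{j(i)})$. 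Applying Markov's inequality to $\bigl(\prod_{k} \omega(k e_{j(i)}, e_{j(i)})\bigr)^{-\eta_{j(i)}}$ and invoking $(E)_0$ yields, for every $q \in [0,1]$,
\begin{equation*}
\P\!\left(\prod_{k=0}^{R-1} \omega\bigl(k e_{j(i)},\, e_{j(i)}\bigr) \le q\right) \;\le\; q^{\eta_{j(i)}} \, \E\bigl[\omega(0, e_{j(i)})^{-\eta_{j(i)}}\bigr]^{R}.
\end{equation*}
Setting $\widetilde{\eta}_i := \eta_{j(i)}$ and $C_i := \max\!\bigl\{1,\, \E[\omega(0, e_{j(i)})^{-\eta_{j(i)}}]\bigr\}$ gives condition $\conditionH$ in direction $e_i$.

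Finally, to confirm $\min_i \widetilde{\eta}_i \ge \eta_*$, let $(a,b)$ with $e_a \perp e_b$ attain the maximum in \eqref{def:eta*}, so that $\eta_* = \eta_a \wedge \eta_b$. For any $i \notin \{\pm a, \pm b\}$ both $a$ and $b$ remain admissible competitors in the definition of $\widetilde{\eta}_i$, so $\widetilde{\eta}_i \ge \eta_a \vee \eta_b \ge \eta_*$. For $i \in \{\pm a\}$ the index $b$ is still admissible (since $e_b \perp e_a$ forces $b \neq \pm a$), giving $\widetilde{\eta}_i \ge \eta_b \ge \eta_*$; the case $i \in \{\pm b\}$ is symmetric. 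I do not anticipate any real obstacle: the argument exploits only the i.i.d.\ structure and a single-direction negative moment, and the only minor bookkeeping is to verify the straight-line realization indeed lies in $\{\hittime{\partial B_R} < \hittime{0}^+ \wedge \exittime{\mathcal{V}_i}\}$ and to enlarge $C_i$ to be at least $1$ so that the bound $q^{\widetilde{\eta}_i} C_i^R$ is nontrivial for all $R$. This also explains why the relevant quantity is $\eta_*$ rather than the singularity of $e_i$ itself: the realizing path never uses the direction $e_i$, only an axis orthogonal to it.
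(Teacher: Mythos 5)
Your proof is correct and follows essentially the same route as the paper: lower-bound the quenched probability by a straight-line path in an orthogonal direction, apply Markov's inequality using i.i.d.\ independence, and choose the orthogonal direction with the largest singularity. Your case analysis at the end (centered on the pair $(a,b)$ achieving the maximum in the definition of $\eta_*$) is a slightly cleaner way to verify $\min_i \widetilde{\eta}_i \ge \eta_*$ than the paper's ``worst scenario'' argument, but it is the same underlying idea.
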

\begin{proof} We want to prove that for each direction $e_i$ there exist positive constants $C_i$ and~$\widetilde{\eta}_i$ such that for all $q \in [0,1]$ and $R \in \N$
  \begin{equation}\label{eq:h}
    \P\left( P_{0,\omega}\left(0 \stackrel{\mathcal{V}_i}{\rightarrow}\partial B_R\right)\right) \le q^{\widetilde{\eta}_i}C_i^R.
  \end{equation}
  Additionally, we also want that $\min_i \widetilde{\eta}_i \ge \eta_*$. In this direction, observe that if $e_m$ is orthogonal to $e_i$ then we can go from $0$ to $\partial B_R$ by taking $R$ steps only at direction $e_m$. Since we are under condition $(E)_0$, by Markov inequality we have that
  \[
    \P\left( \prod_{k=0}^{R-1}\omega(ke_{m},e_{m}) \le q \right) \le q^{\eta_{m}}\E\left[\omega(0,e_{m})^{-\eta_{m}}\right]^R.
  \]
  However, in order to maximize the value of $\widetilde{\eta}_i$ in \eqref{eq:h} and to ensure that $\min_i \widetilde{\eta}_i \ge \eta_*$, we must choose the direction $e_m$ properly. In order to do so, we will consider the worst scenario for our choices which corresponds to that one whose two directions with largest singularities are not perpendicular to each other.

  Thus, suppose the two largest values among
  $\eta_1,\ldots,\eta_{2d}$ on condition $(E)_0$ correspond to
  directions $j$ and $-j$. Let $i_0$ be the direction (orthogonal to
  $j$ and $-j$) such that $\eta_{i_0}$ is the third largest singularity. 
	For a fixed direction $i$, we proceed as follows: if  either~$i=-j$ or
        $i=j$,
we then have that $e_{i_0} \in \cal{V}_i$. Now, consider the line segment from $0$ to $\partial B_R$ in the direction $e_{i_0}$. Then, Markov's inequality and $(E)_0$ yield 
\begin{equation}\label{ineq:EimpliesH}
\P\left( P_{0,\omega} \left(0 \stackrel{\cal{V}_i}{\longrightarrow} \partial B_R \right) \le q \right) \le \P\left( \prod_{k=0}^{R-1}\omega(ke_{i_0},e_{i_0}) \le q \right) \le q^{\eta_{i_0}}\E\left[\omega(0,e_{i_0})^{-\eta_{i_0}}\right]^R<\infty.
\end{equation}
On the other hand, if $j \in \mathcal{V}_i$, then we hit $\partial B_R$ going straight to it using direction $e_j$ and repeat the above bound using $e_j$. Thus, condition $\conditionH$ is satisfied in a way that either $\widetilde{\eta}_i = \eta_j \ge \eta_*$ or $\widetilde{\eta}_i = \eta_{i_0} = \eta_*$, which proves the lemma. 
\end{proof}
Now we have all the results needed to prove the general positive speed criteria (Theorem \ref{thm:BR})and the central limit theorem (Theorem \ref{thm:cltBR}). 
\begin{proof}[Proof of Theorems \ref{thm:BR} and \ref{thm:cltBR}] The
  proof of both theorems is a matter of putting together the results
  we have developed so far. From Lemma \ref{lemma:mineta} we have that
  under $(E)_0$, condition $\conditionH$ is satisfied in a way that
  $\min_i \eta_i \ge \eta_{*}$, where $\eta_{*}$ is the third largest
  singularity given by $(E)_0$. Moreover, under the hypothesis of Theorem
  \ref{thm:BR}, by Proposition \ref{prop:BR} the walk has $1$-good
  attainability. Thus, Theorems \ref{thm:framework} and \ref{thm:genframework} imply ballisticity. 	On the other hand, under the hypothesis of Theorem \ref{thm:cltBR} we have $2$-good attainability which is enough to prove Theorem \ref{thm:cltBR}. 
\end{proof}

\medskip

We end this section showing how Theorem \ref{thm:BR} implies Corollary \ref{corocor}
\begin{proof}[Proof of Corollary \ref{corocor}
  ] Observe when $\eta_*> 1/2$ there exists $c_*>0$ such that 
	$$
	\frac{1+c_*}{\eta_* c_*} -2 < 0. 
	$$
	Thus, choosing $R= 0$ and noticing that $T_{B_0} = 1$, $P_o$-a.s. it follows that condition 
	$$
		E_oT_{B_0}^{1+c_*} <\infty 
	$$
	is trivially satisfied. Applying Theorem \ref{thm:BR} we prove the result. 
      \end{proof}

      \medskip

      \subsection{Proof of Theorems \ref{thm:X} and \ref{thm:cltX2}}
      To prove the computable criteria theorem for $\Z^2$ we will use
      Theorems \ref{thm:BR} and \ref{thm:cltBR}. In  light of both
      theorems, instead of proving that the walk escapes a growing
      region of $\Z^2$ (that is, the attainability condition $(A)_a$), we can reduce the work to prove that the walk escapes fast enough a finite region, i.e., the ball $B_R$. 
 
In order to guarantee that a RWRE under $(X)_1$ escapes any $B_R$ in finite mean time we will introduce the concept of exit strategy, which will help us to bound the probability of reaching $\partial B_R$. 

Since theses ideas rely on the language of flow networks, we will
introduce the main definitions and results about flows in the next
subsection. Then, we will prove how, in our context of RWRE, flows may
be useful to bound paths probabilities on a finite ball. In
Section \ref{sss1},  we will review some results about
flows on directed graphs. In Section \ref{sss2} we will show
how the theory of flows can be used to obtain bound on atypically
small probabilities and define a random graph process on
$B_R$. Finally, in
Section \ref{sss3}, we will prove  Theorem \ref{thm:X} by proving that there exist a random flow having good properties supported on the graphs generated by our graph process.

\subsubsection{Some results about flows on directed graphs}
\label{sss1}
Our techniques to prove Theorems \ref{thm:X} and \ref{thm:cltX2} rely
on flows over directed graphs. For this reason, we will introduce some
definitions and important results on the subject here.  The reader can
also
consult the textbook \cite{LP17} for more details.

A directed graph $G = (V, \mathcal{E})$ is a graph whose edges have a direction. For an edge~$e = (e_{-}, e_+) \in \mathcal{E}$, we call the vertex $e_{-}$ the \emph{tail} of $e$ and $e_+$ the \emph{head} of $e$. Thus the edge $e$ goes from $e_-$ to~$e_+$. Given a (un)directed graph $G$, we will denote its edge set by~$\mathcal{E}(G)$, or simply $\mathcal{E}$ when $G$ is clear from the context.

\medskip
\begin{definition}[Flow] Consider a directed graph $G=(V,\mathcal{E})$.
     A flow $\theta$ on $G$ with \emph{source} $A \subset V$ and \emph{sink}
     $Z \subset V$  on $G$ is a function $\theta : \mathcal{E} \to
     \R_+$ satisfying the following conditions,
 
	\begin{enumerate}
		\item For all $x \in (A \cup Z)^c $, 
		\[
		{\rm div}\theta(x) := \sum_{e \in \mathcal{E}, \, e_- = x}\theta(e) - \sum_{e \in \mathcal{E}, \, e_+ = x}\theta(e) = 0; 
		\]
		\item For all $x \in A$, ${\rm div}\theta(x) \ge 0$; 
		\item For all $x \in Z$, ${\rm div}\theta(x) \le 0$. 
	\end{enumerate}

      \end{definition}

      \medskip
The \textit{strength} of a flow $\theta$ is the total amount of flow going from the source to the sink and will be denoted by 
\begin{equation}
\|\theta\| := 	\sum_{x \in A} {\rm div}\theta(x). 
\end{equation}

   A \textit{non-degenerate} flow from $A$ to $Z$  is a flow  with
  source $A$ and sink $Z$ and $\| \theta \| >0$. A \textit{unit flow}
is a flow  of strength $1$. A \textit{capacity} function on a directed graph is a function $c: \mathcal{E}(G) \to \mathbb{R}_+$. A directed graph together with a capacity is called a \textit{network}. We will call a flow $\theta$ on a network $G$ \textit{admissible} if it satisfies $\theta(e) \le c(e)$ for all $e \in \mathcal{E}(G)$. In words, if the flow does not exceed edges' capacity.

In the context of RWRE, we let flows and capacities depend on the
environment configuration $\omega$. Thus a \textit{random flow}
$\theta$ on $\Z^d$ from $A \subset \Z^d$ to $Z\subset \Z^d$ is a
function~$\theta: \Omega \times \mathcal{E}(\Z^d) \to \mathbb{R}_+$
such that $\theta(\omega, \cdot)$ is a flow on the graph
$(\Z^d,\mathbb E^d)$ (where $\mathbb E^d$ are the directed nearest
neighbor edges of $\Z^d$) from $A$ to $Z$ for almost every
$\omega$. 
Under such definitions, the strength of a random flow $\theta$ is a random variable on $\Omega$. 
We similarly define \textit{random} capacity.

We say that a subset of edges $\Pi$ \textit{separates} $A$ from $Z$ if all paths going from $A$ to $Z$ use at least one edge in $\Pi$.  In this case we say $\Pi$ is a \textit{cutset}. 

It will be useful for our purposes to construct flows satisfying some
constrains. For this purpose we will use the following generalized version of
the classical Max-flow  Min-cut theorem.

\medskip

\begin{theorem}[Max-Flow Min-Cut Theorem, \cite{LP17}]\label{th:maxflow} Let $A$ and $Z$ be disjoint sets of vertices in a directed finite network $G$. The maximum strength of an admissible flow between $A$ and $Z$ equals the minimum cutset sum of the capacities. In symbols, 
	\begin{equation}
	\begin{split}
	\max \Lbrace \|\theta \|; \; \theta \text{ is an admissible
          flow from }A\text{ to } Z  \text{satisfying } \forall e\
        0\le\theta(e)\le c(e)\Rbrace\\ 
	=  \min \Lbrace \sum_{e \in \Pi} c(e), \Pi \text{ separates }A \text{ from }Z \Rbrace. 
	\end{split}
	\end{equation}
\end{theorem}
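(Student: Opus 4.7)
The plan is to prove the two inequalities $\max\le\min$ and $\min\le\max$ separately. For the easy direction, I would fix an arbitrary admissible flow $\theta$ and an arbitrary cutset $\Pi$ separating $A$ from $Z$, and run a divergence computation showing that $\|\theta\|$ equals the net flow across any set $S\supseteq A$ with $S\cap Z=\emptyset$. Summing the conservation identity ${\rm div}\,\theta(x)=0$ over $x\in S\setminus A$ and combining it with the definition of $\|\theta\|$ at vertices of $A$, one obtains
\[
\|\theta\|=\sum_{e:\,e_{-}\in S,\, e_{+}\notin S}\theta(e)-\sum_{e:\,e_{+}\in S,\, e_{-}\notin S}\theta(e).
\]
Choosing $S$ to be the set of vertices reachable from $A$ by directed paths avoiding $\Pi$, every edge contributing to the first sum lies in $\Pi$ and the second sum is nonnegative, yielding $\|\theta\|\le\sum_{e\in\Pi}c(e)$ for every admissible $\theta$ and every cutset $\Pi$.

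For the reverse inequality I would follow the residual-network/augmenting path strategy. Since the polytope $\{\theta:\,0\le\theta(e)\le c(e)\text{ and conservation at interior vertices}\}$ is a compact subset of $\mathbb{R}^{|\mathcal{E}|}$ and $\theta\mapsto\|\theta\|$ is continuous, a maximizer $\theta^{*}$ exists. From $\theta^{*}$ I would build the residual graph whose edge set consists of a forward copy of $e$ whenever $\theta^{*}(e)<c(e)$ and a reverse copy of $e$ whenever $\theta^{*}(e)>0$, and let $S$ be the set of vertices reachable from $A$ in this residual graph. The crucial observation is that $Z\cap S=\emptyset$: otherwise a residual path from $A$ to $Z$ would let one push a strictly positive amount of extra flow through $\theta^{*}$, contradicting its maximality.

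With this setup the saturating cutset is $\Pi^{*}:=\{e\in\mathcal{E}:\,e_{-}\in S,\,e_{+}\notin S\}$, which separates $A$ from $Z$. By construction of $S$ every $e\in\Pi^{*}$ must satisfy $\theta^{*}(e)=c(e)$, for otherwise its forward residual copy would extend $S$ across $e$; dually, every edge $e$ with $e_{+}\in S$ and $e_{-}\notin S$ must satisfy $\theta^{*}(e)=0$, for otherwise its reverse residual copy would extend $S$. Plugging these two identities into the divergence formula established in the easy direction gives $\|\theta^{*}\|=\sum_{e\in\Pi^{*}}c(e)$, which exhibits a pair (flow, cut) attaining a common value and closes the sandwich with the opposite bound.

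The main obstacle will be handling the maximizer cleanly for arbitrary nonnegative real capacities. For rational $c$ the Ford--Fulkerson augmentation procedure terminates in finitely many steps and produces $\theta^{*}$ constructively, but for irrational capacities one must instead rely on the compactness argument for existence of $\theta^{*}$ and only use augmenting paths as a conceptual device to build the cutset. The delicate step is then the simultaneous verification that forward edges of $\Pi^{*}$ are saturated while the reverse edges across it carry zero flow, since it is precisely these two identities that make the divergence formula collapse to the cut capacity.
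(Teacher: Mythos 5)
The paper does not prove this theorem; it is stated as a cited textbook result from \cite{LP17} and invoked as a black box. Your argument is a correct and complete rendition of the standard proof: weak duality via the divergence identity $\|\theta\|=\sum_{e:\,e_-\in S,\,e_+\notin S}\theta(e)-\sum_{e:\,e_+\in S,\,e_-\notin S}\theta(e)$ applied to the set $S$ of vertices reachable from $A$ avoiding $\Pi$, and strong duality via the residual graph of a maximizer $\theta^*$, whose existence you correctly obtain by compactness of the flow polytope rather than by Ford--Fulkerson termination, so the argument goes through for arbitrary nonnegative real capacities. Both key identities at the cut $\Pi^*$ (saturation of forward edges, vanishing of reverse-crossing flow) follow from the definition of $S$ exactly as you state, so the sandwich closes.
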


\medskip
Observe that a undirected $G$ graph may be transformed into a directed
one by duplicating every edge of $G$ and considering two edges, one
for each direction. We call the directed graph obtained from this operation the \textit{directed} version of $G$.

\subsubsection{Flows and probability of paths}
\label{sss2} In this part we will show how a flow can be used to bound the atypically small
probabilities of escaping a ball $B_R$. Our main result in this part is Lemma \ref{lemma:flowpath}, but before we state and prove it, we will need additional terminology as well as an intermediate result.

 It will be useful to our purposes to decompose a given (random) flow from $0$ to $\partial B_R$ on the directed version of $B_R$, as a finite collection of directed weighted paths going from $0$ to $\partial B_R$. For a given path $\sigma$ in this decomposition, we let $p_\sigma$ be its weight and we will write $\sigma \in \theta$ to mean that $\sigma$ is a directed path from $0$ to $\partial B_R$ such that $p_\sigma > 0$.

 The lemma below guarantees this decomposition and connects the $p$-weights assigned to paths with the strength of a random flow. It states that the amount of flow flowing from $0$ to $\partial B_R$ is the sum of the $p$-weights over the directed paths from $0$ to $\partial B_R$.

\begin{lemma}\label{lemma:pweights} For a fixed positive integer $R$, let $\theta$ be a (random) flow from $0$ to $\partial B_R$ supported on the directed version of $B_R$ such that $\|\theta \| > 0$, $\P$-almost surely. Then, we can assign weights to directed paths from $0$ to $\partial B_R$ in a way that
  \begin{equation}\label{eq:pweights_strengh}
    \sum_{\sigma \in \theta} p_{\sigma} = \|\theta\|, \; \P-a.s.
    \end{equation}
\end{lemma}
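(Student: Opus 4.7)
The plan is to invoke the classical flow decomposition theorem (for deterministic flows on finite directed graphs) pointwise in $\omega$, and then argue that the resulting weights can be chosen in a measurable way so they constitute a bona fide random assignment.

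First I would recall the deterministic statement: on a finite directed graph, any non-negative flow $\theta$ from a source set $A$ to a sink set $Z$ admits a decomposition $\theta = \sum_i p_{\sigma_i}\mathbf{1}_{\sigma_i} + \sum_j q_{\gamma_j}\mathbf{1}_{\gamma_j}$, where the $\sigma_i$'s are simple directed paths from $A$ to $Z$, the $\gamma_j$'s are simple directed cycles, and the coefficients are non-negative. The standard proof is a greedy/peeling argument: if there is some edge with $\theta(e) > 0$, start at its tail and repeatedly move backward along incoming positive-flow edges (which exist by conservation away from $A$), and similarly forward along outgoing positive-flow edges. Because the graph is finite, either we close a cycle (in which case we peel off a cycle flow of weight equal to the minimum edge-value along it) or we reach both $A$ and $Z$ (in which case we peel off a path flow of the same minimum weight). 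Each peeling reduces the number of edges with positive flow by at least one, so the procedure terminates in at most $|\mathcal{E}(B_R)|$ steps.

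Applying this to the directed version of $B_R$ with source $\{0\}$ and sink $\partial B_R$, for each $\omega$ we obtain a finite list of directed paths $\sigma$ from $0$ to $\partial B_R$ with weights $p_\sigma(\omega) \ge 0$, plus some cycle contributions. The strength of the flow is $\|\theta\| = \mathrm{div}\,\theta(0)$, and by conservation cycle flows contribute $0$ to the divergence at every vertex, while each path flow of weight $p_\sigma$ contributes exactly $p_\sigma$ to $\mathrm{div}\,\theta(0)$. Summing over the decomposition yields
\begin{equation*}
\sum_{\sigma \in \theta} p_\sigma \;=\; \mathrm{div}\,\theta(0) \;=\; \|\theta\|,
\end{equation*}
which is the claim. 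Under the convention $\sigma \in \theta \iff p_\sigma > 0$, paths absent from the decomposition are simply assigned weight $0$.

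The only subtlety is measurability in $\omega$, since the peeling procedure involves choices. I would fix once and for all an enumeration of the (finitely many) simple directed paths from $0$ to $\partial B_R$ in $B_R$, and at each step of the greedy algorithm select the path or cycle that is lexicographically smallest in this enumeration. With this deterministic tie-breaking rule the outputs $p_\sigma(\omega)$ become measurable functions of $\theta(\omega,\cdot)$, hence of $\omega$; the whole decomposition is thus a legitimate random assignment of weights satisfying \eqref{eq:pweights_strengh} $\mathbb{P}$-almost surely. The main (minor) obstacle is simply keeping the bookkeeping of cycles versus paths clean — but since cycles have zero divergence they do not interfere with the identity linking $\sum_\sigma p_\sigma$ to $\|\theta\|$.
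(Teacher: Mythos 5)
Your proof is correct and is essentially the same greedy peeling argument as the paper's: the paper also subtracts off path flows of weight $\min_{e\in\sigma}\theta(e)$ one at a time until the residual $\theta_k$ has strength zero, then evaluates $\mathrm{div}\,\theta(0)$. You are slightly more explicit on two points the paper leaves implicit — that the residual is precisely the cycle part of the classical flow decomposition, and that a deterministic tie-breaking rule (your lexicographic selection) is needed so the weights $p_\sigma(\omega)$ are measurable in $\omega$; both refinements are welcome but do not change the substance of the argument.
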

\begin{proof} Given a follow $\theta$, we can associate to a directed path $\sigma$ from $0$ to $\partial B_R$ the following weight 
	\begin{equation}
	p'_{\theta, \sigma} := \min_{e \in \sigma}\theta(e). 
	\end{equation}

	We will obtain our $p$-weights to satisfy Equation \eqref{eq:pweights_strengh} from the $p'$-weights in an inductive way.
	First choose a path $\sigma_0$ such that $p'_{\theta,\sigma_0} >0$, which exists due to the fact that $\|\theta \| > 0$. Now, consider the new flow 
  $$
  \theta_0 = \theta - p'_{\theta, \sigma_0} \mathbb{1}_{\{e \in \sigma_0\}}.
  $$
  If there is no other directed path from $0$ to $\partial B_R$ whose $p'$-weight under $\theta_0$ is positive, then $\| \theta_0 \| = \mathrm{div}(0) = 0$, since we have removed from $\theta$ the only path leading flow from $0$ to $\partial B_R$, and we set $p_{\sigma_0} := p'_{\theta, \sigma_0}$ . However, if there is another path $\sigma_1$ such that $p'_{\theta_0,\sigma_1} > 0$, then we set 
 $$
	p_{\sigma_0} := p'_{\theta, \sigma_0}; \quad p_{\sigma_1} := p'_{\theta_0, \sigma_1},
 $$
  and we consider a new flow
  $$
  \theta_1 := \theta - p_{\sigma_0} \mathbb{1}_{\{e \in \sigma_0\}} - p_{\sigma_1} \mathbb{1}_{\{e \in \sigma_1\}}.
  $$

  Repeating this procedure until we end up with a degenerate flow $\theta_k$, which allows us to write
  \begin{equation}\label{eq:thetadecomp}
    \theta = \theta_k + \sum_{\sigma \in \theta}p_{\sigma} \mathbb{1}_{\{e \in \sigma\}}. 
  \end{equation}
  Finally, the above identity yields
  \begin{equation}
    \begin{split}
      \|\theta \| & =  \sum_{e, e_{-} = 0}\theta(e) - \sum_{e, e_+ = 0}\theta(e) \\ 
      & = \|\theta_k \| + \sum_{e, e_{-} = 0}\sum_{\sigma \in \theta}p_{\sigma} \delta_{\{e \in \sigma\}} - \sum_{e, e_{+} = 0}\sum_{\sigma \in \theta}p_{\sigma} \delta_{\{e \in \sigma\}} \\ 
      & = \sum_{e, e_{-} = 0}\sum_{\sigma \in \theta}p_{\sigma} \delta_{\{e \in \sigma\}} = \sum_{\sigma \in \theta}p_\sigma\delta_{\{e \in \sigma\}}, 
    \end{split}
  \end{equation}
since $\|\theta_k \|=0$ and all the directed paths $\sigma$ from $0$ to $\partial B_R$ do not contain any directed edge returning to $0$ but only leaving $0$, which implies 
$$
\sum_{e, e_{+} = 0}\sum_{\sigma \in \theta}p_{\sigma} \delta_{\{e \in \sigma\}} = 0\; \text{  and } \sum_{e, e_{-} = 0}\sum_{\sigma \in \theta}p_{\sigma} \delta_{\{e \in \sigma\}} = \sum_{\sigma \in \theta}p_\sigma,
$$
which concludes the proof of the lemma.
\end{proof}
We are now able to state and prove the connection between path probabilities and flows.

\begin{lemma}[From flows to paths] \label{lemma:flowpath} For a fixed positive integer $R$, let $\theta$ be a (random) flow from $0$ to $\partial B_R$ supported on the directed version of $B_R$. Then, for any $q>0$
	\[
	\P \left( \max_{y \in \partial B_R}P_{0, \omega}\left( H_y < H_0^+ \right) \le q \right) \le \P \left( \prod_{e \in \mathcal{E}(B_R)} \omega(e)^{\theta(e)} \le q^{\|\theta\|}\right). 
      \]
\end{lemma}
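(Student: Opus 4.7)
The plan is to argue pointwise in $\omega$: I will establish the set inclusion
\[
\Bigl\{ \max_{y \in \partial B_R} P_{0,\omega}(H_y < H_0^+) \le q \Bigr\} \subseteq \Bigl\{ \prod_{e \in \mathcal{E}(B_R)} \omega(e)^{\theta(e)} \le q^{\|\theta\|} \Bigr\},
\]
from which the stated inequality between probabilities follows by monotonicity. Fix $\omega$ in the left-hand event; the case $\|\theta(\omega)\| = 0$ is trivial (both sides equal $1$), so assume $\|\theta(\omega)\| > 0$.

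First I would apply Lemma \ref{lemma:pweights} to decompose $\theta = \theta_k + \sum_{\sigma \in \theta} p_\sigma \mathbb{1}_{\{e \in \sigma\}}$, where the $\sigma$'s are directed paths from $0$ to $\partial B_R$, the $p_\sigma$'s are non-negative weights with $\sum_\sigma p_\sigma = \|\theta\|$ by \eqref{eq:pweights_strengh}, and $\theta_k$ is a residual flow of strength zero (hence supported on cycles). I will insist that each $\sigma$ be a \emph{simple} directed path, which can be arranged at each step of the inductive construction in the proof of Lemma \ref{lemma:pweights}: any directed path from $0$ to $\partial B_R$ contains a simple sub-path from $0$ to $\partial B_R$ whose edges carry at least as much flow, so one may always take the selected path $\sigma_j$ to be simple. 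With simplicity in hand, for each $\sigma$ ending at $y_\sigma \in \partial B_R$, the quenched probability that the walk follows $\sigma$ step-by-step equals $\prod_{e \in \sigma} \omega(e)$, and such a trajectory reaches $y_\sigma$ without revisiting $0$; hence on the event under consideration,
\[
\prod_{e \in \sigma} \omega(e) \;\le\; P_{0,\omega}(H_{y_\sigma} < H_0^+) \;\le\; q.
\]

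To conclude, using that $\omega(e) \in (0,1]$ to bound the cycle contribution by $\prod_e \omega(e)^{\theta_k(e)} \le 1$, the decomposition yields
\[
\prod_e \omega(e)^{\theta(e)} \;=\; \prod_e \omega(e)^{\theta_k(e)} \cdot \prod_{\sigma} \Bigl(\prod_{e \in \sigma} \omega(e)\Bigr)^{p_\sigma} \;\le\; \prod_{\sigma} q^{p_\sigma} \;=\; q^{\|\theta\|},
\]
which establishes the required pointwise inclusion. The only genuinely non-routine point is the reduction to simple paths: without simplicity, $\prod_{e \in \sigma} \omega(e)$ would not bound $P_{0,\omega}(H_{y_\sigma} < H_0^+)$, because a non-simple trajectory could traverse $0$ before reaching $\partial B_R$ and thus fall outside $\{H_{y_\sigma} < H_0^+\}$. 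Once the simple-path decomposition is in place, the remainder is elementary.
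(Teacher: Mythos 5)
Your proof is correct and follows the paper's overall blueprint — reduce to the path decomposition of Lemma \ref{lemma:pweights} and observe that, on the event in question, each path weight $\omega_\sigma=\prod_{e\in\sigma}\omega(e)$ is bounded by $q$ — but you finish the argument in a cleaner way. The paper passes from ``$\omega_\sigma\le q$ for all $\sigma\in\theta$'' to $\sum_\sigma p_\sigma\omega_\sigma\le q\|\theta\|$ and then invokes Jensen's inequality for the logarithm to recover $\prod_\sigma\omega_\sigma^{p_\sigma/\|\theta\|}\le q$, followed by a separate lower bound of that product by $\prod_e\omega(e)^{\theta(e)/\|\theta\|}$. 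You bypass Jensen entirely: expanding $\theta(e)=\theta_k(e)+\sum_{\sigma\ni e}p_\sigma$ and regrouping the product over edges into a product over paths gives $\prod_e\omega(e)^{\theta(e)}=\prod_e\omega(e)^{\theta_k(e)}\cdot\prod_\sigma\omega_\sigma^{p_\sigma}\le\prod_\sigma q^{p_\sigma}=q^{\|\theta\|}$, using $\omega(e)\le1$ on the residual cycle term. The two routes are morally equivalent but yours is more elementary, and it makes transparent that the only thing you actually need is the termwise bound $\omega_\sigma\le q$. You also make explicit a point the paper elides: the bound $\omega_\sigma\le P_{0,\omega}(H_{y_\sigma}<H_0^+)$ requires the paths $\sigma$ in the decomposition to be simple (or at least not to revisit $0$), otherwise the trajectory following $\sigma$ need not belong to the event $\{H_{y_\sigma}<H_0^+\}$. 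Your observation that the inductive construction in Lemma \ref{lemma:pweights} can always select a simple sub-path is the right patch and is worth recording; the paper implicitly relies on this both in its Equation \eqref{ineq:max} and in the last line of the proof of Lemma \ref{lemma:pweights}, where it asserts the paths never return to $0$.
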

\begin{proof}  We begin by noticing that we may assume $\|\theta \| >0$, $\P$-a.s. This is possible because on the environments such that $\|\theta \| = 0$, we have $q^{\|\theta\|} = 1$ and then the result trivially holds.

Since $\theta$ is a non-degenerate flow $\P$-a.s., by Lemma \ref{lemma:pweights}, $\theta$ induces a set of paths from $0$ to $\partial B_R$ with positive $p$-weights. On the other hand, given a path $\sigma$ from $0$ to $\partial B_R$ we may associate a weight to it according to the transitions probabilities on $B_R$. I.e., 
\[
\omega_{\sigma} := \prod_{e \in \sigma} \omega(e). 
\]
Observe that the following inequality holds 
\begin{equation}\label{ineq:max}
\P \left( \max_{y \in \partial B_R}P_{0, \omega}\left( H_y < H_0^+
  \right) \le q \right) \le  \P \left( \omega_{\sigma} \le q, \,
  \forall \sigma \text{ from } 0 \text{ to }\partial B_R \text{ in }\theta\right). 
\end{equation}
Now, on the event $\omega_{\sigma} \le q$ for all $\sigma \in \theta$ and by Lemma \ref{lemma:pweights}, we have
\begin{equation}
\sum_{\sigma \in \theta} p_{\sigma}\omega_{\sigma} \le q \sum_{\sigma \in \theta} p_{\sigma} = q\|\theta\|. 
\end{equation}
But, by Jensen's inequality for concave functions we obtain 
\begin{equation}\label{eq: jensen}
\begin{split}
\log q \ge \log \left( \|\theta\|^{-1} \sum_{\sigma \in \theta} p_{\sigma}\omega_{\sigma} \right) & \ge \|\theta\|^{-1}\sum_{\sigma \in \theta} p_{\sigma}\log \omega_{\sigma} = \log \left( \prod_{ \sigma \in \theta} \omega_{\sigma}^{p_{\sigma}/\|\theta\|}\right) \\
q  \ge \|\theta\|^{-1} \sum_{\sigma} p_{\sigma}\omega_{\sigma} & \ge \prod_{ \sigma \in \theta} \omega_{\sigma}^{p_{\sigma}/\|\theta\|} 
\end{split}
\end{equation}
By Equation \ref{eq:thetadecomp}, it follows that for a fixed edge $e$ we have 
\[
\sum_{\sigma, \; \sigma \ni e}p_{\sigma} \le \theta(e). 
\]
Thus, by the above inequality and the fact that $\omega(e)$ is at most $1$ for all edges, the product~$\prod_{ \sigma \in \theta} \omega_{\sigma}^{p_{\sigma}/\|\theta\|}$ can be bounded from below by 
\[
\prod_{ \sigma \in \theta} \omega_{\sigma}^{p_{\sigma}/\|\theta\|} \ge \prod_{ \substack{e \in \mathcal{E}(B_R) \\ \exists \sigma, e \in \sigma}} \omega(e)^{\|\theta\|^{-1}\sum_{\sigma, \; \sigma \ni e}p_{\sigma}}\prod_{ \substack{e \in \mathcal{E}(B_R) \\ \nexists \sigma, e \in \sigma}} \omega(e)^{\|\theta\|^{-1}\theta(e)} \ge \prod_{e \in \mathcal{E}(B_R)} \omega(e)^{\|\theta\|^{-1}\theta(e)}. 
\]
Returning to Equation \eqref{eq: jensen}, we conclude that on the event $\omega_\sigma \le q$ for all $\sigma \in \theta$, the following holds
$$
\prod_{e \in \mathcal{E}(B_R)} \omega(e)^{\|\theta\|^{-1}\theta(e)} \le q,
$$
which combined with Equation \eqref{ineq:max} leads to 
\begin{equation}\label{ineq:flowrelation}
\P \left( \max_{y \in \partial B_R}P_{o, \omega}\left( H_y < H_0^+ \right) \le q \right)  \le \P \left( \prod_{e \in \mathcal{E}(B_R)} \omega(e)^{\theta(e)} \le q^{\|\theta\|}\right), 
\end{equation}
which proves the lemma. 
\end{proof}

\medskip

\subsubsection{The exit strategy}\label{sss3} The next step towards
proof of Theorem \ref{thm:X} is to construct an exit
strategy for the random walk from the a ball $B_R$.
To construct this strategy, we will first need to construct two
auxiliary processes, which we will call
the \textit{exploration processes}, each one of which  choses a set of paths between
$0$ and the boundary $\partial B_R$.
We will then use the set of  vertices defined by the paths of both
exploration processes to generate
a subgraph $G_R$ of $B_R$ for which we can control the path probabilities. This subgraph can be seen as a simplification of $B_R$ but large enough to contain good paths from the origin $0$ to $\partial B_R$.

Now we can define the two exploration processes involved in the exit 
strategy. At any given time, the exploration process is defined as a
set of {\it activated vertices}, a set of {\it deactivated vertice}
and an integer
keeping track of the number of bifurcations done in the exploration.
Each exploration process will be denoted by $\{\explor{(i)}{t}\}_t$,
with $i=1,2$. The state space of each one is
$\mathcal P(\mathbb Z^2)^2
\times \mathbb N$, where $\mathcal P(\mathbb Z^2)$ is the power set of
$\mathbb Z^2$, so that
each at each time $t$ we have 
$$
\explor{(i)}{t} := (\mathcal{A}_t^{(i)}, \mathcal{D}^{(i)}_t,
B_t^{(i)})\in \mathcal P(\mathbb Z^2)^2\times\mathbb N,
$$
where $\mathcal{A}^{(i)}_t$ stands for the \textit{activated} vertices at time $t$ and $\mathcal{D}^{(i)}_t$ for the \textit{deactivated} ones in the $i$-th exploration process, whereas $B_t^{(i)}$ stands for the number of bifurcation rules we have used during the $i$-th exploration process. 
Each exploration process will evolve as a random subset of activated
and deactivated sites, so that at each time, new activated sites are
added which are nearest neighboring sites to the active sites, while
some old active sites become deactivated.
To define this evolution precisely  we  need
to define rules of activation of new sites which we call
 \textit{activation rules}.   Below we describe each activation rule and then we will see how the exploration processes use them. 
 evolution
Throughout all the definitions, we assume the activation rule will be
performed from a fixed vertex $x$
which is active. Moreover, when a vertex becomes
active,  a new active vertex will be
attached to it later according to a certain rule which is a function
of the environment, and which we will call an {\it activation rule} or {\it instruction} which
will
be denoted by
~$\mathcal{I}(x)$\footnote{Latter, the instructions
  on~$\mathcal{I}(x)$ will help us to decide which activation rule may
  be used on $x$.}. The activation rule or instruction that is
performed at each step will depend on the environment. This is the
list of possible activation rules from a vertex $x$, where in all of them
$i,j\in\{1,\ldots,d,-1,\ldots,-d\}$ are
the indices unit vectors $e_i, e_j, e_k$:
 
\medskip

\begin{itemize}
	\item  \underline{\textit{Forward rule to direction $j$}.}
          Activate vertex $x+e_j$ and $\mathcal{I}(x+e_j)$ becomes the instruction ``\textit{forward-$j$}''. \\
	
	\item \underline{\textit{Orthogonal rule for $i$ and $j$}.} In
          the case in which $i$ and $j$ are orthogonal directions, activate vertex $x+e_k$, where 
	\begin{equation}
	k := \arg \max \{\omega(x, x+e_i), \omega(x, x+e_j)\}. 
	\end{equation}
	In case of tie, we simply choose $k = \min\{i,j\}$.
        $\mathcal{I}(x+e_k)$ becomes the instruction ``\textit{orthogonal}-$(i,j)$''\\
	
	\item  \underline{\textit{First bifurcation rule of direction $j$}.} Activate the following vertices: $x+e_{k_1}$ and $x+e_{k_2}$, where 
	\begin{equation}
	k_1 := \arg \max_{k \neq -j} \{\omega(x, x+e_k)\}; \, k_2 := \arg \max_{k \notin \{-j, k_1\}}\{\omega(x, x+e_k)\}. 
	\end{equation}
	 $\mathcal{I}(x+e_{k_1})$ becomes the instruction
         ``\textit{forward}-$k_1$'' while $\mathcal{I}(x+e_{k_2})$
         becomes the instruction ``\textit{orthogonal-}$(j,-k_1)$''. \\
	
	\item  \underline{\textit{Second bifurcation rule of direction $j$}.} Activate the following vertices: $x+e_{k_1}$ and $x+e_{k_2}$, where 
	\begin{equation}
	k_1 := \arg \max_{k \neq -j} \{\omega(x, x+e_k)\}; \, k_2 := \arg \max_{k \notin \{-j, k_1\}}\{\omega(x, x+e_k)\}. 
	\end{equation}
$\mathcal{I}(x+e_{k_1})$  becomes the instruction
``\textit{orthogonal}-$(j,k_1)$'' while $\mathcal{I}(x+e_{k_2})$ becomes the instruction ``\textit{orthogonal-}$(j,-k_1)$''. 
\end{itemize}

\medskip

Now we define the initial conditions of both processes and then
describe how they evolve according
to the activation rules (given an environment their evolution will be
independent, an they will only differ in their initial condition):
for $\mathcal Z^{(1)}$ we set its initial condition as the one whose
only activated site is $0$, no deactivated sites and $B^{(1)}_0=0$, so that
\begin{equation}
  \nonumber
\mathcal Z^{(1)}_0=(\mathcal{A}^{(1)}_0,  \mathcal{D}^{(1)}_0, B^{(1)}_0) =
(\{0\},\emptyset ,0)
\end{equation}
For $\mathcal Z^{(2)}$, its initial condition will also be chosen as
one having only one activated site, no deactivated sites and
$B^{(2)}_0=0$.
Nevertheless, its activated site will be chosen as the nearest
neighbor of $0$ to which there is a highest probability of jumping from
$0$. To define this, let

\begin{equation}
  \nonumber
j_* := \arg \max_{k \in \{-2,-1,1,2\}} \{\omega(x, x+e_k)\}, 
\end{equation}
In case of tie, choose $j_*$ arbitrarily. We denote $0':= e_{j_*}$ and set 
\begin{equation}
  \nonumber
\mathcal Z^{(2)}_0=(\mathcal{A}^{(2)}_0,  \mathcal{D}^{(2)}_0,
B^{(2)}_0) = (\{0'\}, \emptyset, 0). 
\end{equation}
 We furthermore set $\mathcal{I}(0)$  as the instruction
 ``\textit{forward-$(-j_*)$}'' and
 $\mathcal{I}(0')$ as ``\textit{forward-$j_*$}''.

Let us now define the evolution of our processes. In the discussion
below $i=1$ or $i=2$.
Suppose that at a given time $n$ the $i$-th process is in state
$\explor{(i)}{n}$. Order the sites of $\mathbb Z^d$ according to the
lexicographic order and select $x \in \mathcal{A}^{(i)}_n$ as
the smallest site. We then execute on $x$ the update rule described below. \\

\noindent \underline{Update rule:} 
\begin{itemize}
	\item \underline{Case 1: $\mathcal{I}(x)$ is the instruction ``\textit{forward}-$j$''}.  If 
	\[
	j \in  \arg \max_{k \neq -j} \{\omega(x, x+e_k)\}, 
	\]
	we activate site $x+e_j$ if
        $x+e_j\notin\partial B_R$, so that we set 
	$$
	\mathcal{A}^{(i)}_{n+1} = 
	\{(x+e_{j})\mathbb{1}_{\{x+e_{j} \notin \partial B_R\}}\} \cup \mathcal{A}^{(i)}_n \backslash \{x\}, 
	$$
	where the notation 
	$$
	(x+e_{j})\mathbb{1}_{\{x+e_{j} \notin \partial B_R\}}
	$$
	means  that we add the element  $x+e_{j}$ only if the condition under the indicator function is satisfied. We also put 
	$$\mathcal{D}^{(i)}_{n+1} = \{x,
        (x+e_{j})\mathbb{1}_{\{x+e_{j} \in \partial B_R\}} \} \cup
        \mathcal{D}^{(i)}_n. $$
        We will say that in this case a new site was activated in the
        {\it forward direction}.

	Otherwise, if

        	\[
	j \notin  \arg \max_{k \neq -j} \{\omega(x, x+e_k)\}, 
      \] 
      a bifurcation will be produced, so  we either perform  the first
      bifurcation rule if $B_n^{(i)} \mod{2} = 0$ and
      otherwise the second one. Then we set $B_{n+1}^{(i)} := B_{n}^{(i)} +1$ and 
	$$\mathcal{A}^{(i)}_{n+1} = \{(x+e_{k_1})\mathbb{1}_{\{x+e_{k_1} \notin \partial B_R\}}, (x+e_{k_2})\mathbb{1}_{\{x+e_{k_2} \notin \partial B_R\}} \} \cup \mathcal{A}_n \backslash \{x\}.$$
and 
	$$\mathcal{D}^{(i)}_{n+1} = \{x, (x+e_{k_1})\mathbb{1}_{\{x+e_{k_1} \in \partial B_R\}}, (x+e_{k_2})\mathbb{1}_{\{x+e_{k_2} \in \partial B_R\}} \} \cup \mathcal{D}^{(i)}_n.$$
We will say in this case that a {\it bifurcation} was produced.

	In summary, if jumping to $x+e_j$ has the largest probability among all directions but $-j$, we take a step to direction $j$. Otherwise, we bifurcate activating the two vertices with highest transition probabilities among all directions, expect $-j$. \\
	
	\item \underline{Case 2: $\mathcal{I}(x)$ is the instruction
            ``\textit{orthogonal}-$(i,j)$''}.
          In this case, we apply the orthogonal rule and make the
          update to time $n+1$,
          
	$$
\mathcal{A}^{(i)}_{n+1} = 
\{(x+e_{k_*})\mathbb{1}_{\{x+e_{k_*} \notin \partial B_R\}}\} \cup \mathcal{A}^{(i)}_n \backslash \{x\}
$$
	and 
$$\mathcal{D}^{(i)}_{n+1} = \{x, (x+e_{k_*})\mathbb{1}_{\{x+e_{k_*} \in \partial B_R\}} \} \cup \mathcal{D}^{(i)}_n,$$
	where ${k_*}$ is the direction given by the \textit{orthogonal-(i,j)} rule.		
	
\end{itemize}

\medskip
We now run independently both exploration processes $\{\explor{(1)}{t}\}_t$ and~$\{\explor{(2)}{t}\}_t$ until both have stopped, which occurs when their set of activated vertices is empty. Both processes stop with probability one since at each step we increase the distance from $0$ (or $0'$) considering paths using activated or deactivated vertices. 

Let $\tau_{i}$ denote the time $\{\explor{(i)}{t}\}_t$ stops. We let $\comp{(i)}{R}$ be the subgraph of $B_R$ whose vertex set is 
\begin{equation}
V(\comp{(i)}{R}) = \mathcal{D}^{(i)}_{\tau_i}. 
\end{equation}
We then construct the subgraph $G_R$ generated by the whole strategy: 
\begin{equation}
V(G_R) := \mathcal{D}^{(1)}_{\tau_1} \cup \mathcal{D}^{(2)}_{\tau_2}. 
\end{equation}

We end this section dedicating a few lines to give some examples of
the kind of graphs the exit strategy may generate. All the figures
below represents $\comp{(1)}{R}$.
In three figures below, the strongest arrow
means this was the direction selected by the update rule, whereas the
light gray arrows represents the other directions the rule had to
check. In case of the first picture in Figure 2.2,
$e_{j_*} = e_{-1}$ and the $\{\explor{(1)}{t}\}_t$ successfully
applied the \textit{forward rule to direction} $1$, $R$ times in a roll. 
\begin{figure}\label{fig:comp1}
	\begin{center}
\scalebox{1}{
	\begin{tabular}{cc}
		
		\includegraphics[scale=0.8]{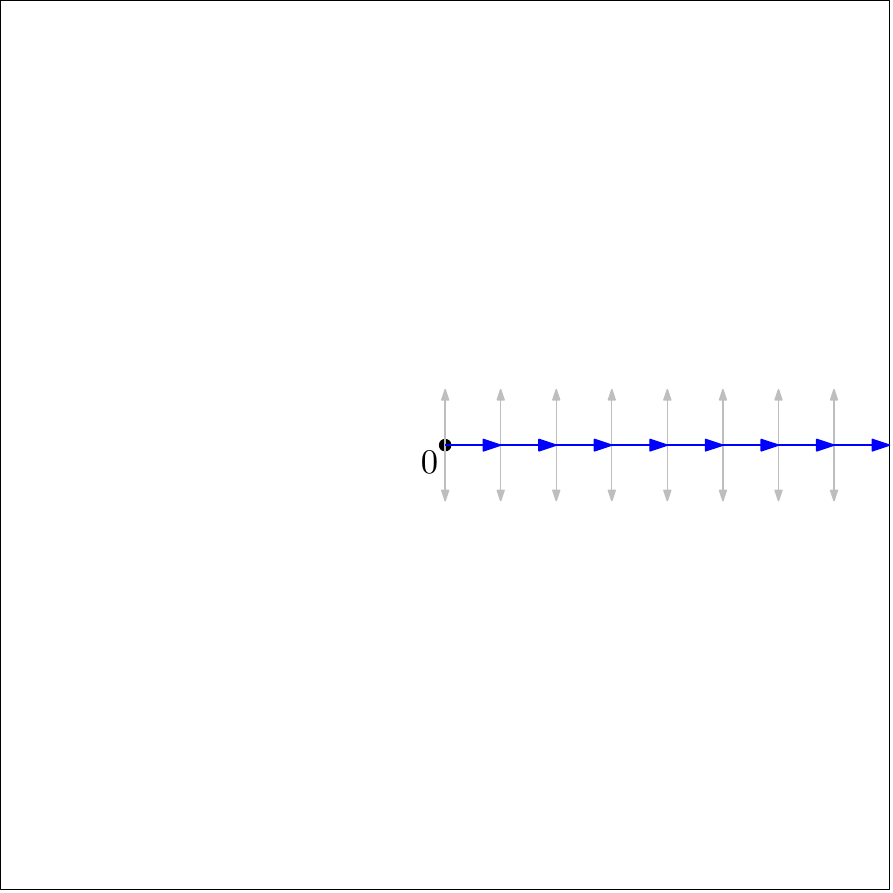}
		&
		\includegraphics[scale=0.8]{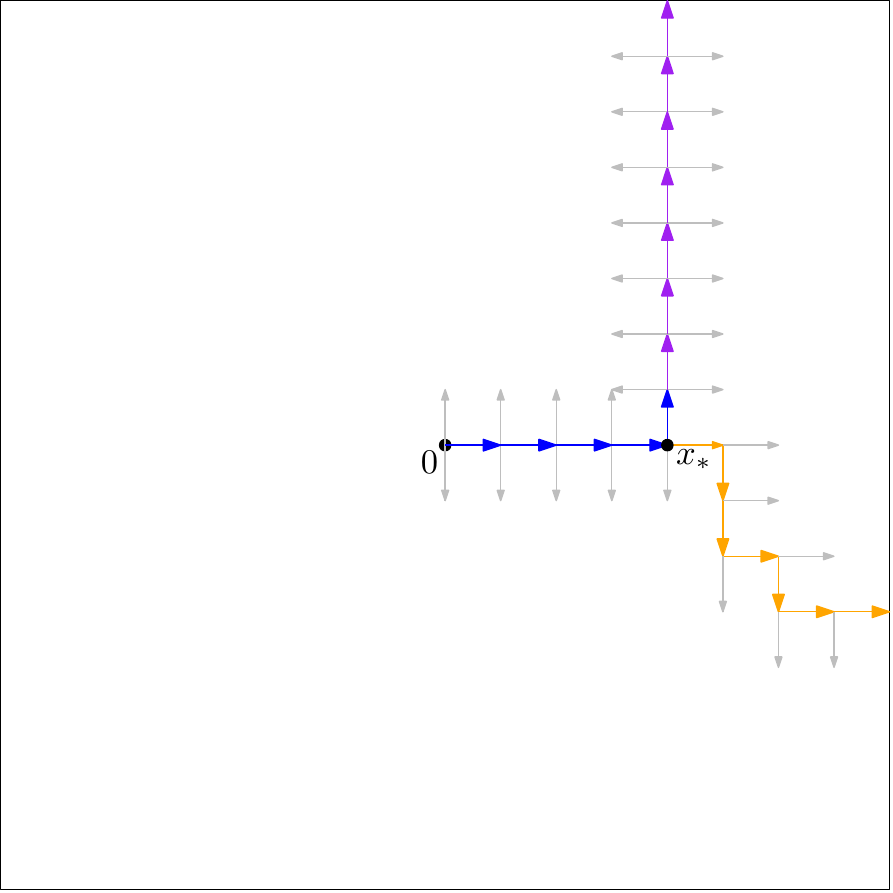}
		\\
		\multicolumn{1}{c}{ } & \multicolumn{1}{c}{} \\
		\multicolumn{1}{c}{\small Forward rule to direction $1$ applied $R$ times}
		&
		\multicolumn{1}{c}{\small Bifurcation rule of direction $1$ applied to $x_*$}
	\end{tabular}
      }
      \caption{\ }
\end{center}
\end{figure}
Whereas, in the second picture of Figure 2.2, after
applying the \textit{forward rule} a few times, the first bifurcation rule  is applied on $x_*$. Thus we \textit{activate} two new vertices: one with an \textit{orthogonal} instruction, which is followed until we reach the boundary, and another vertex with a "forward-$2$" instruction. Then, the process successfully apply the activation rule \textit{forward rule to direction $2$}, generating a up-path from $x_*$ to the boundary of the box. 
Finally, in Figure 2.3 we have an example where the process bifurcates twice. Notice that in each component the process bifurcates at most two times, since in the second bifurcation, the activated vertices receives orthogonal instructions, thus from them we keep applying the \textit{orthogonal} rule. 
\begin{figure}\label{fig:comp22}
	\centering 
	\includegraphics[scale=0.8]{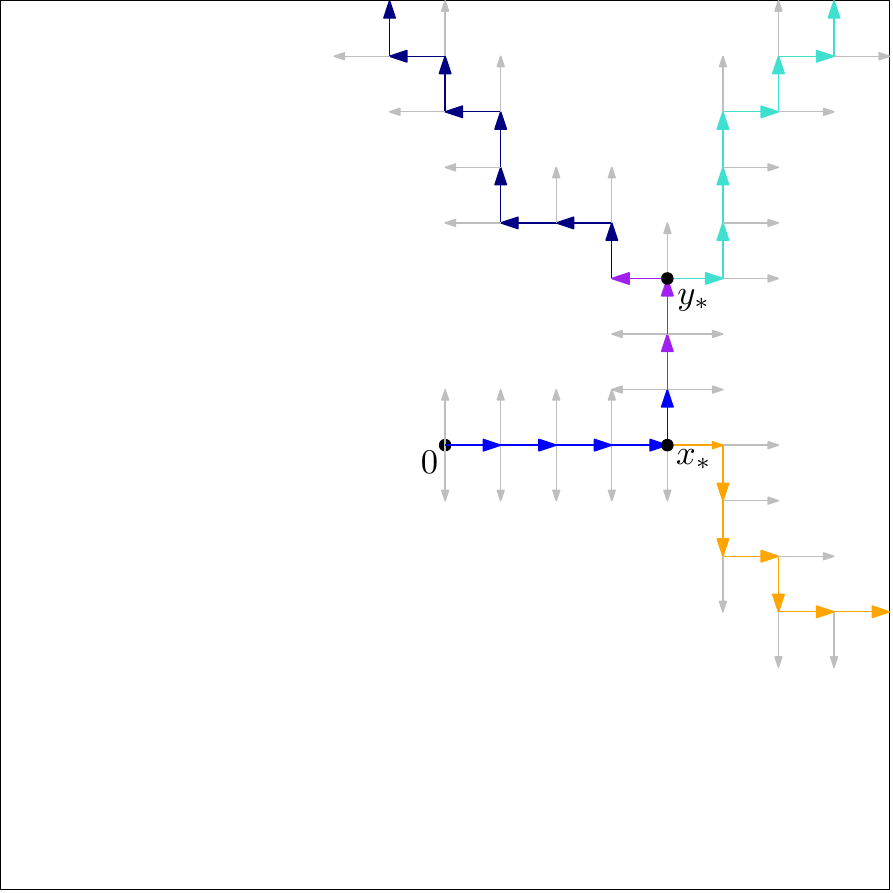}
	\caption{Two bifurcations $x_*$ and $y_*$}
      \end{figure}

      \medskip

\subsubsection{Constructing random capacities on $B_R$}\label{sec:network}
As said before we want to guarantee the existence of a (random) flow
by applying the the Max-flow Min-cut Theorem stated at Theorem
\ref{th:maxflow}. Thus we need to construct a network in $B_R$. In order to do that, first we see $B_R$ as a directed graph. That is, each edge of $B_R$ appears twice (one for each direction). Whereas the edges of $G_R$ appear only once and in the direction they have been revealed by the exploration process. That is, if from $x_n^{(i)}$ we have activated $x_n^{(i)} +e_j$, then only $(x_n^{(i)}, x_n^{(i)}+e_j)$ belongs to the directed version of $G_R$. 

Next we must give the directed edges of $B_R$ a capacity. This
capacity function $c$ depends on the environment, since it will depend on the random graph $G_R$. However, to keep the notation compact, we will omit its dependence on the environment. Moreover
it will be supported on the directed edges of $G_R$, that is 
\begin{equation}
c\restriction_{\mathcal{E}(B_R)\setminus\mathcal{E}(G_R)} = 0. 
\end{equation}

Now, let us describe how we construct the capacity function
$c$. Consider an edge $(x,x+e_j) \in \mathcal{E}(G_R)$ such that $x$
belongs to a
single component $\mathcal{C}_R^{(i)}$ and such that no  bifurcation
rule has been performed on it, we assign  the  capacity 
$$
c((x,x+e_j)) = \begin{cases}
			\beta_j, & \text{ if }\mathcal{I}(x) = \text{ forward-}j \\ \\
			\alpha_{ij}, & \text{ if }\mathcal{I}(x) = \text{ orthogonal-}(i,j) 
		\end{cases}
                $$
  where the  exponents $\beta_j$ and $\alpha_{ij}$ and 
the transition probabilities $Q_{\vdash}$ and $Q_{\llcorner}$ have
been defined  in Section \ref{sec:notation}.
Notice that if $(x,x+e_j) \in \mathcal{E}(G_R)$ and $\mathcal{I}(x) = \text{ forward-}j$ it means that 
$$
\omega(x, x+e_j) = \max_{k \neq -j}\omega(x, x+e_k), 
$$
which in turn implies that 

\begin{equation}
  \label{intint}
\int_{\omega(x, x+e_j) = \max_{k \neq -j}\omega(x, x+e_k)} \omega(x,x+e_j)^{-\beta_j}\mathrm{d}\P(\omega) < \infty.
\end{equation}
In this case we say
$\omega(x, x+e_j)$ has a singularity of at
least $\beta_j$ on the event $\{\omega(x, x+e_j) = \max_{k \neq 
  -j}\omega(x, x+e_k)\}$.
In what follows we will say that $\omega(x,x+e)$ has a singularity
of at least $\alpha$ on the event $A$ if the integral as
(\ref{intint}), where $\omega(x,x+e_j)$ is replaced by
$\omega(x,x+e)$, $\beta_j$ by $\alpha$ and $\{\omega(x, x+e_j) = \max_{k \neq 
  -j}\omega(x, x+e_k)\}$ by $A$, is finite. 
The picture
below illustrates this for the case  $j=1$. 
\begin{figure}[h]
	\centering 
	\includegraphics[width=0.3\linewidth]{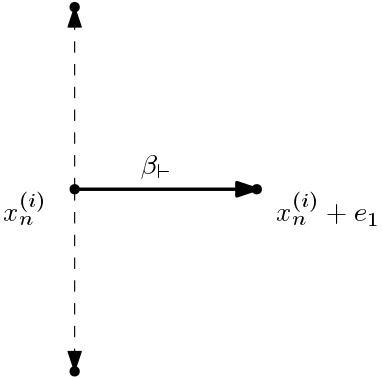}
	\caption {Assigning capacity $\beta_{\vdash} = \beta_1$ to the edge $(x, x +e_1) \in \mathcal{E}(\mathcal{C}^{(i)}_R)$}
	\label{fig:forward-1}
      \end{figure}
The same argument works when $\mathcal{I}(x) = \text{ orthogonal-}(i,j)$. In this case 
$$
\omega(x, x+e_j) = \max_{k \in \{i,j\}}\omega(x, x+e_k) 
$$
and then $\omega(x, x+e_j)$ has a singularity at least $\alpha_{i,j}$
on the event $\{\omega(x, x+e_j)$  $= \max_{k \in \{i,j\}}\omega(x, x+e_k)\}$.
Notice that by construction, when $x$ is a vertex in which a
bifurcation rule has been performed the instruction attached to $x$
must be \textit{forward-}$\ell$ for some direction $\ell$. Then, still
considering the case
in which the edge belongs to a single component, we assign the following capacity to $(x,x+e_j)$
\begin{equation}
c((x,x+e_j)) = \begin{cases}
\beta_j, & \text{ if }\omega(x, x+e_j) = \max_{k \neq -\ell}\omega(x, x+e_{k}); \\ \\
\alpha_{\ell j}, & \text{ otherwise. }
\end{cases}
\end{equation}
The picture below illustrates the two cases at once. When $\ell = 1$, but the largest transition probability among directions $\{-1,1,2\}$ is at direction $2$, we assign capacity $\beta_2 = \beta_{\perp}$. And for the edge which has the largest probability transition among $\ell$ and $-2$ we assign capacity $\alpha_{1,(-2)} = \alpha_{\stackrel{}{\ulcorner}}$. 
\begin{figure}[h]
	\centering 
	\includegraphics[width=0.3\linewidth]{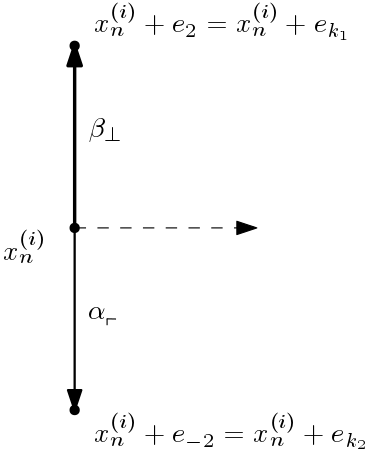}
	\caption{}
	\label{fig:bifurcation}
\end{figure}
When an edge belongs to both components,  $\mathcal{C}_R^{(1)}$ and $\mathcal{C}_R^{(2)}$, we assign the rule of assignment which gives the largest capacity. That is, for an edge that belongs to both components, the above procedure applied for each component gives us two possible capacities for the edge, then we choose the largest one.

\medskip

\subsubsection{Constructing a random flow from $0-0'$ to $\partial B_R$}
 Our  objective in this section is to construct a flow on the network
  $B_R$ which has the capacity constructed in the previous section. This flow will have the pair of vertices $\{0,0'\}$ as source and $\partial B_R$ as sink. So, in order to simplify our notation, we will write $0-0'$ instead of $\{0,0'\}$. 
With that in mind, formally, the objective of this section is to prove the following theorem.

\medskip

\begin{theorem}[Random flow from $0-0'$ to $\partial
  B_R$]\label{thm:randomflow} Let $a>0$. Consider an i.i.d. random environment
  satisfying condition $(X)_a$. Then, for any radius $R$ there exist a
  $\varepsilon>0$ and a random flow $\theta$ from $0-0'$ to $\partial
  B_R$ such that

  \begin{enumerate}
		\item $\|\theta\| \ge a+\varepsilon$, $\P$-almost surely; 
		
		\item $\theta(e) \le c(e)$  for all directed edges in $\mathcal{E}(B_R)$, $\P$-almost surely. Here $c$ is the capacity function constructed in Section \ref{sec:network}
	\end{enumerate}
	
\end{theorem}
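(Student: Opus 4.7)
The plan is to apply the Max-Flow Min-Cut Theorem \ref{th:maxflow} to the network $(B_R, c)$ with source $\{0, 0'\}$ and sink $\partial B_R$. Since by construction the capacities $c(e)$ vanish on edges outside $\mathcal{E}(G_R)$, any minimum cutset $\Pi$ may be taken to consist only of edges of $G_R$. It therefore suffices to exhibit an $\varepsilon>0$, depending only on the gaps in the strict inequalities of $(X)_a$, such that
\[
\sum_{e \in \Pi} c(e) \;\geq\; a + \varepsilon
\]
$\P$-a.s. for every such cutset $\Pi$. The existence of a flow $\theta$ satisfying properties (1) and (2) is then a direct consequence of Theorem \ref{th:maxflow}.

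To carry out the case analysis I would exploit the tree-like structure of $G_R = \mathcal{C}_R^{(1)} \cup \mathcal{C}_R^{(2)}$. Each component consists of a main trunk emanating from its source and, at each bifurcation, spawns one new forward branch together with an orthogonal branch that by definition cannot bifurcate further; after the second bifurcation the second bifurcation rule forces both offspring to carry orthogonal instructions. Hence each $\mathcal{C}_R^{(i)}$ contains at most two bifurcation vertices, and the combinatorial type of a minimal cut of $G_R$ is indexed by the pair $(B^{(1)}_{\tau_1}, B^{(2)}_{\tau_2}) \in \{0,1,2\}^2$ together with, for each component, whether the cut is performed before, at, or after each bifurcation, leaving a finite list of configurations.

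For each configuration I would match the capacities of the severed edges with the geometric picture of Section \ref{sec:localtrapping}. When neither component has bifurcated the cut crosses the two forward trunks and has capacity $\beta_\dashv + \beta_\vdash$ (up to rotation), which is $>a$ by \eqref{eq:edgerel}. When exactly one component has bifurcated the cut must sever one trunk on one side and both bifurcation branches on the other, producing a capacity bound of the form $\alpha_{\stackrel{}{\ulcorner}} + \beta_\perp + \beta_\dashv$, which is $>a$ by \eqref{eq:wedgerel}. When both components have bifurcated, four $\alpha$-edges must be cut, yielding $\alpha_{\stackrel{}{\ulcorner}} + \alpha_{\stackrel{}{\urcorner}} + \alpha_{\lrcorner} + \alpha_{\llcorner} > a$ by \eqref{eq:squarerel}. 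Taking $\varepsilon$ as the minimum of the three strict gaps gives the desired lower bound, and edges shared by both components only help, since by construction $c$ on such edges is defined as the maximum of the two candidate capacities.

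The main obstacle I anticipate is the bookkeeping in the case enumeration: for every combinatorial type and for every choice of the initial direction $j_*$, one must check that the edges that a minimal cut is forced to intersect carry precisely the $\alpha$- and $\beta$-capacities matching one of \eqref{eq:edgerel}-\eqref{eq:squarerel}. In particular, the horseshoe configuration does not appear explicitly in $(X)_a$; as discussed in Section \ref{sec:localtrapping}, it is handled by observing that the corners of a horseshoe form a subset of the corners of a containing square, reducing its estimate to \eqref{eq:squarerel}.
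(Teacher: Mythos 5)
Your proposal follows essentially the same route as the paper: reduce the existence of the flow to verifying, via the Max-flow Min-cut theorem, that every cutset $\Pi$ separating $\{0,0'\}$ from $\partial B_R$ satisfies $c(\Pi)\ge a+\varepsilon$, observe that a minimum cutset is supported on $\mathcal{E}(G_R)$, and then match each combinatorial type of cut against one of the relations \eqref{eq:edgerel}--\eqref{eq:squarerel}. In fact your write-up is somewhat more explicit than the paper's (which simply asserts that ``the capacity of $\Pi$ is bounded from below by some combination of the exponents covered by one of the inequalities''), since you spell out the enumeration by bifurcation count, the choice of $\varepsilon$ as the minimum of the strict gaps, and the treatment of edges shared by both components via the maximum of the two candidate capacities; this is the same argument with the bookkeeping made visible.
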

\begin{proof}
	Given the relations \eqref{eq:edgerel}-\eqref{eq:squarerel} on
        condition $(X)_a$, there exist small enough $\varepsilon$ so
        that all relations are  still satisfied with $a$ replaced by $a+\varepsilon$. 
	
	Now, for a fixed environment $\omega$ we run our exploration process which gives us $G_R$ and the correspondent capacity $c$. By the Max-flow Min-cut Theorem (Theorem \ref{th:maxflow}) in order to prove the existence of a flow from the source $\{0, 0'\}$ to the sink $\partial B_R$ satisfying (1) and (2) we have to prove that 
	\begin{equation}\label{eq:capcut}
		\min \{c(\Pi)\, :\, \Pi \textit{ is a cutset }\} \ge a+\varepsilon, 
	\end{equation}
	where the capacity of a set of edges is just 
	$$
	c(\Pi) = \sum_{e \in \Pi}c(e). 
	$$
	But observe that every cut set $\Pi$ must contain edges of
        $G_R$, otherwise we would have a path from $0$ (or $0'$) to
        $\partial B_R$ with no edge in $\Pi$. Moreover, since we want to minimize the capacity of cut sets, we must separate $0-0'$ from $\partial B_R$ using edges with the smallest capacity and using the smallest number of edges possible. 
	
	Moreover, by construction, all the edges whose capacity is
        positive must have a capacity given by an exponent of either
        $\beta$-type or $\alpha$-type. Also by the construction of the
        exit strategy the capacity of $\Pi$ is bounded from below by
        some combination of the exponents covered by one of the
        inequalities \eqref{eq:edgerel}-\eqref{eq:squarerel}. Thus, by the Max-flow Min-cut theorem follows our result. 

\end{proof}

\medskip

\subsubsection{Final step of the proof.
  }
Before we start the proof of the two main results of this section, we
will need an additional lemma.

\medskip

\begin{lemma}\label{lemma:expecfinite} Let $a>0$. Assume that condition $(X)_a$ holds, then
  $$
	\mathbb{E}\left[ \prod_{e \in \mathcal{E}(B_R)}\omega(e)^{-c(e)}\right] < \infty. 
	$$
where $c$ is the random capacity constructed at Section \ref{sec:network}.
\end{lemma}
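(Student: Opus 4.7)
The plan is to decompose the product over edges into a product of local contributions at each vertex of $G_R$, exploit the i.i.d.\ structure of the environment together with the independence of the instruction at $x$ from $\omega(x,\cdot)$, and then verify that each local expectation matches one of the finite integrals provided by condition $(X)_a$.

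First I would observe that since $c(e) = 0$ for every $e \notin \mathcal{E}(G_R)$ and $V(G_R) \subset B_R$, the product over edges of $\mathcal{E}(B_R)$ reduces to a finite product indexed by vertices of $G_R$:
$$
\prod_{e \in \mathcal{E}(B_R)} \omega(e)^{-c(e)} \;=\; \prod_{x \in V(G_R)} \;\prod_{\substack{e\in \mathcal{E}(G_R)\\ e=(x,\,x+e_j)}} \omega(x,e_j)^{-c(e)}.
$$
By construction of the exploration processes, for each $x \in V(G_R)$ only finitely many (at most four, counting both components) outgoing edges carry positive capacity; and the instruction $\mathcal{I}(x)$ attached to $x$ was assigned when $x$ was activated from a strictly earlier vertex in the exploration. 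Since the environment is i.i.d., $\mathcal{I}(x)$ is independent of $\omega(x,\cdot)$.

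Next I would enumerate the cases for the local contribution at $x$ and match each one with an integral from $(X)_a$. If $\mathcal{I}(x) = \text{forward-}j$ and no bifurcation occurs, the unique active outgoing edge produces $\omega(x,e_j)^{-\beta_j}$ on the event $\{\omega(x,e_j) = \max_{k\neq -j} \omega(x,e_k)\}$, whose expectation is bounded by $\int Q_{\vdash}^{-\beta_{\vdash}}\,d\mathbb{P}$ (up to rotation) and is finite by \eqref{x:eq1}. If $\mathcal{I}(x) = \text{orthogonal-}(i,j)$, the contribution is $\omega(x,e_k)^{-\alpha_{ij}}$ for $k$ maximizing among $\{i,j\}$, bounded by $\int Q_{\lrcorner}^{-\alpha_{\lrcorner}}\,d\mathbb{P}$, finite by \eqref{x:eq1}. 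If a bifurcation rule has been performed at $x$ with prior instruction $\text{forward-}\ell$, two outgoing edges are activated, yielding a mixed $(\beta,\alpha)$ contribution on the event $\{\max\{\dashv\}=\text{some direction}\}$, and this is precisely one of the integrals listed in \eqref{x:eq2}. When an edge is shared by both components $\mathcal{C}_R^{(1)}$ and $\mathcal{C}_R^{(2)}$, taking the larger of the two candidate capacities only combines two cases already appearing in the list, and the bound remains one of the finite integrals of $(X)_a$.

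Once this case analysis is done, I would condition on the $\sigma$-algebra generated by $\omega(y,\cdot)$ for $y\neq x$, apply the i.i.d.\ property to obtain the local bound $\mathbb{E}[\,\cdot\,|\,\text{past}\,] \le C$ at each vertex, where $C$ is the maximum of the finitely many integrals appearing in \eqref{x:eq1}-\eqref{x:eq2}, and iterate. Since $|V(G_R)| \le |B_R|$ is deterministic and finite, this yields
$$
\mathbb{E}\!\left[\prod_{e \in \mathcal{E}(B_R)} \omega(e)^{-c(e)}\right] \;\le\; C^{\,|B_R|} \;<\; \infty.
$$

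The main obstacle, and what requires the most care, is the enumeration of cases: one must verify that every combination of instructions and local configurations of $\omega(x,\cdot)$ that the exploration can produce at a vertex of $G_R$ is \emph{exactly} covered by one of the integrals supplied by $(X)_a$. In particular, the identification of the bifurcation configurations with the specific max-indicator integrals of \eqref{x:eq2}, and the verification that the max-capacity rule for edges lying in both components never escapes this list, is the delicate combinatorial step of the argument.
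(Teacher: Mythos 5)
Your high-level strategy matches the paper's: reduce the product to the (random) subgraph $G_R$, factorize over vertices using the i.i.d.\ structure, and identify each local factor with one of the finite integrals \eqref{x:eq1}--\eqref{x:eq2}. The case analysis you sketch (single forward edge, single orthogonal edge, bifurcation, shared edge in both components) is the right enumeration and is what the paper carries out in its Cases 1, 2.1, and 2.2.1--2.2.3.

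However, the step ``condition on the $\sigma$-algebra generated by $\omega(y,\cdot)$ for $y \neq x$ \ldots\ and iterate'' is not a clean tower argument, because the $\sigma$-algebras $\sigma(\omega(y,\cdot):y\neq x)$ for different $x$ are neither nested nor generate an obvious martingale structure, and the index set $V(G_R)$ itself is a function of the whole environment. The paper sidesteps this by first \emph{partitioning} over the realizations $\{\mathcal{Z}=Z\}$ of the two exploration processes: on each such event the subgraph $G_R$, the attached instructions, and the (rule for the) capacities are all deterministic, the indicator $\mathbb{1}\{\mathcal{Z}=Z\}$ splits as a product of vertex-local indicators, and independence across sites gives an honest product of one-site expectations — each of which is one of your integrals. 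Finiteness then follows because $B_R$ supports only finitely many realizations $Z$. Your argument would need this preliminary partition (or an explicit martingale decomposition along the exploration filtration) to be rigorous; the local finiteness inputs from $(X)_a$ that you invoke are the correct ones. One additional detail you elide is that the shared-edge case (paper's Case 2.2.3) uses the comparison $\alpha_{ij}\le\beta_i$ to bring the joint term under the third integral in \eqref{x:eq2}; stating it as ``combines two cases already in the list'' leaves the reader to supply this observation.
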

  
\begin{proof}
	In order to prove the above finiteness, we split the expected
        value into the possible realizations for the exploration
        processes $\{\explor{(1)}{t}\}_t$ and $\{\explor{(2)}{t}\}_t$. But before we start the proof, it might be instructive to examine in more details the evolution of the exploration process. One important feature of the exploration process is that it evolves by examining the transition probabilities of one given vertex at each step. This implies that an intersection of events like the ones below
        \begin{equation}\label{eq:event_int}
           \Lbrace \arg \max_k \omega(0,e_k) = j, \arg \max_{k\neq j} \omega(0,e_k) = -j \Rbrace \cap \Lbrace \arg \max_{k\neq j} \omega(e_{-j},e_k) = -j \Rbrace \cap \dots,
        \end{equation}
        completely determines the exploration process. In other words, if we know the transition probabilities for some subset of vertices of $B_R$ this is enough to determine the evolution of the exploration process. Thus, we let
        $$
        \Lbrace \{\explor{(i)}{t}\}_t = Z_i \Rbrace, 
        $$
        be an arbitrary event of the type we exemplified in \eqref{eq:event_int}, which completes determines $\{\explor{(i)}{t}\}_t$. And to simplify our writing, we write
	    \begin{equation}\label{eq:z_trajectories}
  \Lbrace \mathcal{Z} = Z\Rbrace := \Lbrace \{\explor{(1)}{t}\}_t = Z_1, \{\explor{(i)}{t}\}_t = Z_2\Rbrace, 
      \end{equation}
   which determines the evolution of both exploration processes.
       To keep the notation compact on the event $\{\mathcal{Z} = Z\}$,
        we write $G_R=G$. 
        
  Since the random capacity $c$ is supported on the the random graph $G_R$, we have that 
        $$
        \prod_{e \in \mathcal{E}(B_R)}\omega(e)^{-c(e)}\mathbb{1}\{\mathcal{Z} = Z\} = \prod_{e \in \mathcal{E}(G)}\omega(e)^{-c(e)}\mathbb{1}\{\mathcal{Z} = Z\} \quad \P-\text{a.s.,}
        $$
      which implies that it is enough to prove
	\begin{equation}\label{eq:prod}
    \mathbb{E}\left[ \prod_{e \in \mathcal{E}(G)}\omega(e)^{-c(e)}\mathbb{1}\{\mathcal{Z} = Z\}\right] < \infty.
	\end{equation}
and then use the fact the $B_R$ has finite volume. In order to prove the above, for a fixed edge $e = (x, x+e_j) \in \mathcal{E}(G)$, we consider
whether there is another edge $e' = (x, x+e_i) \in \mathcal{E}(G)$ or
not. We want to use the i.i.d. nature of the environment to write the
above expectation  as a product of other expectations. However, since
we may have distinct edges which are adjacent to the same vertex, we
will have to arrange the product grouping all the terms coming from
the same vertex, then we can use independence between vertices and
finally conditions \eqref{x:eq1} and \eqref{x:eq2} to guarantee that each expectation in the product is finite. 

Since we can write $\mathbb{1}\{\{\explor{(i)}{t}\}_t = Z_i\}$ as a product of independent indicators indexed by subset of vertices of $B_R$, given a vertex $x$ in this index set, we will write $\mathbb{1}\{\mathcal{Z}_t^{(i)}(x) = Z_i(x)\}$ to denote the $x$-th indicator in this product.  We then continue the proof separating it in different cases
according to the position of $x$ in $G$. \\
	
	\noindent \underline{Case 1. Vertex $x$ has only one neighbor in $G$:} \\
		In this case, $x$ is a vertex of $B_R$ such that for
                some $j$, $(x, x+e_j)$ belongs to $\mathcal{E}(G)$ and
                $(x,x+e_i) \notin \mathcal{E}(G)$ for all $i\neq
                j$. Combining this with the fact that $c((x,x+e_j))$ is not random on the event $\{\mathcal{Z} = Z\}$, it follows that the term $\omega(x,x+e_j)^{-c((x,x+e_j))}$ is independent of all other random variables of the form~$\omega(e)^{-c(e)}$ inside the expected value, except possibly from $\mathbb{1}\{\mathcal{Z} = Z\}$. 
	
	Moreover, recall that the random capacity $c((x, x+e_j))$ is a number chosen according to the instruction $\mathcal{I}(x)$ and the distribution of $\omega(x, \cdot)$ and in a way that due to \eqref{x:eq1} and \eqref{x:eq2} it follows 
	\begin{equation}\label{eq:case1}
	\mathbb{E}\left[ \omega(x,x+e_j)^{-c((x,x+e_j))}\mathbb{1}\{\mathcal{Z}_t^{(i)}(x) = Z_i(x)\} \right] < \infty. 
	\end{equation}
	
	\bigskip

	\noindent \underline{Case 2. Vertex $x$ has more than one neighbor in $G$:} \\
	
	We first note that by construction of the exit strategy at Section \ref{sss3} there are at most two neighbors of $x$ such that $(x,x+e_i) \in \mathcal{E}(G_R)$, since all the instructions for the grow of the exploration processes do not backtrack. 
	We split this case into two subcases:\\
	
	\underline{Case 2.1. Vertex $x$ belongs to only one component $\mathcal{C}^{(i)}_R$:} In this case, we have performed a bifurcation rule on $x$. It means on the event $\{\mathcal{Z} = Z\}$ the instruction $\mathcal{I}(x)$ is ``forward-$\ell$'' to some direction $\ell$ but the largest transition among all directions different from $-\ell$ is not at direction $\ell$, this forces the process to bifurcate. And then to one edge we assign a $\alpha$-type capacity and to the other one a $\beta$-type capacity is assigned (see Figure \ref{fig:bifurcation}). Thus, we have a contribution of the form 
	\begin{equation}\label{eq:case2}
	\E\left[\omega(x,x+e_i)^{-\alpha_{i\ell}}\omega(x,x+e_j)^{-\beta_{j}}\mathbb{1}\{\arg\max_{k\neq -\ell}\omega(x,x+e_k) = j\} \right]<\infty, 
	\end{equation}
	which is finite due to \eqref{x:eq2}. \\
	
	\underline{Case 2.2. Vertex $x$ belongs to $\mathcal{C}^{(1)}_R \cap \mathcal{C}^{(2)}_R$:}
	We consider all the possible cases for $\mathcal{I}^{(1)}(x)$ and $\mathcal{I}^{(2)}(x)$ that can occur simultaneously. \\

	\underline{Case 2.2.1. $\mathcal{I}^{(1)}(x) =$``forward-j'' and $\mathcal{I}^{(2)}(x) =$ ``orthogonal-$(i,j)$'' } We first notice that if $\arg \max_{k\neq -j} \omega(x,x+e_k)= j$, then both exploration processes will activate the same neighbor and $x$ will have only one neighbor in $G$ and we already covered this case. 
	
	Thus we have to assume that $\arg \max_{k\neq -j}\omega(x,x+e_k) \neq j$. In this situation, $\{\explor{(1)}{t}\}_t$ performs a bifurcation rule at $x$, activating two neighbors of $x$, one of these neighbors is the same neighbor activated by $\{\explor{(2)}{t}\}_t$. So, this case resumes to Case 2.1. \\
	
	\underline{Case 2.2.2. $\mathcal{I}^{(1)}(x) = \mathcal{I}^{(2)}(x) =$``orthogonal-$(i,j)$'' } In this case both exploration processes activate the same vertex. Thus, even though, an intersection has occurred, $x$ has only one neighbor in $G$ and this situation is covered at Case 1. \\
	
	\underline{Case 2.2.3. $\mathcal{I}^{(1)}(x) =$``orthogonal-$(i,j)$'' and $\mathcal{I}^{(2)}(x) =$ ``orthogonal-$(-i,j)$'' } Observe that if $\arg \max_{k\neq -j}\omega(x,x+e_k) = j$ there is nothing to do by same reasoning used in the previous case. 
	
	We may assume w.l.o.g $\arg \max_{k\neq -j} \omega(x,x+e_k)= i$ and that $\arg \max_{k= \{-i,j\}} \omega(x,x+e_k)= -i$.  In this case we have the following contributions in \eqref{eq:prod}
	\begin{equation}\label{eq:case3}
	\omega(x,x+e_i)^{-\alpha_{ij}}\omega(x, x+e_{-i})^{-\alpha_{(-i)j}}\mathbb{1}\{\arg\max_{k\neq -j}\omega(x,x+e_k) = i\}. 
	\end{equation}
	Since $\alpha_{ij} \le \beta_i$, by \eqref{x:eq2} it follows that 
	$$
	\E \left[ \omega(x,x+e_i)^{-\alpha_{ij}}\omega(x, x+e_{-i})^{-\alpha_{(-i)j}}\mathbb{1}\{\arg\max_{k\neq -j}\omega(x,x+e_k) = i\} \right] < \infty. 
	$$	
	Notice that by the construction of the exit strategy cases 2.2.1, 2.2.2 and 2.2.3 cover all the possible ways of having an intersection between both exploration processes. 
	
	To conclude the proof, we first point out that seeing $\mathbb{1}\{\mathcal{Z} = Z\}$ as a product of indicators indexed by the vertices activated by each process $\{\explor{(i)}{t}\}_t$ and recalling that the event $\{\mathcal{Z} = Z\}$ completely determines the capacity and the distribution of each $\omega(x, \cdot)$ for $x$ an activated vertex. We can write $\prod_{x \in V(G)}\omega(x,x+e)^{-c((x,x+e))}\mathbb{1}\{\mathcal{Z} = Z\}$ as a product of random variables of the form given at \eqref{eq:case1}, \eqref{eq:case2} and/or \eqref{eq:case3}. This proves that for each realization of the two exploration processes 
	\begin{equation*}\mathbb{E}\left[ \prod_{e \in \mathcal{E}(G)}\omega(e)^{-c(e)}\mathbb{1}\{\mathcal{Z} = Z\}\right] < \infty, 
	\end{equation*}
	which is enough to conclude the proof since $B_R$ has finite volume and consequently there are finitely many events of the form $\{\mathcal{Z} = Z\}$ to consider.
\end{proof}
Now we have all the results needed for the main proof of this section. 
\begin{proof}[Proof of Theorems \ref{thm:X} and \ref{thm:cltX2}]
We apply our general criteria, that is, Theorems \ref{thm:BR} and \ref{thm:cltBR}. Thus, our new results will be proven if we prove that condition $(X)_a$ implies condition~$\BR{a}{\eta_*}$. Then, we have ballistic behavior under $(X)_1$ and CLT-like result under $(X)_2$. 
	
In order to prove condition $\BR{a}{\eta_*}$ holds under $(X)_a$, let $R$ be a fixed positive integer. And notice that a combination of Lemma~\ref{lemma:flowpath}, Theorem \ref{thm:randomflow}, Markov inequality and Lemma~\ref{lemma:expecfinite} implies the existence of a constant $C$ depending on $R$ such that 
\begin{equation}\label{eq:exitbr}
	\begin{split}
		\P \left( \max_{y \in \partial B_R}P_{0, \omega}\left[ H_y < H_0^+ \right] \le u^{-1} \right) &\le \P \left( \prod_{e \in \mathcal{E}(B_R)} \omega(e)^{\theta(\omega,e)} \le u^{-\|\theta\|}\right) \\
		& \le \P \left( \prod_{e \in \mathcal{E}(B_R)} \omega(e)^{c(e)} \le u^{-(a+\varepsilon)}\right) \\
		& \le u^{-(a+\varepsilon)}\mathbb{E}\left[ \prod_{e \in \mathcal{E}(B_R)}\omega(e)^{-c(e)}\right] \\
		& \le Cu^{-(a+\varepsilon)}
	\end{split}	
\end{equation}
in the second inequality we have used the properties of $\theta$ given by Theorem \ref{thm:randomflow}. Moreover, recall that the random variable $N_{B_R}(x)$ which counts the number of visits to $x$ before leaving the ball $B_R$ and let $A_n(R)$ be the following event 
\begin{equation}
A_n(R) := \Lbrace \max_{y \in \partial B_R} P_{0,w}\left[H_y < H_0^+ \right] \le n^{-\frac{a+\delta}{a+\varepsilon}} \Rbrace, 
\end{equation}
for some $\delta <\varepsilon$. Also recall that under $P_{0,\omega}$ we have 
$$
N_{B_R}(0) \sim \text{Geo}\left(P_{0,\omega}\left[ H_{\partial B_{R+1}}<H_0^+\right]\right) 
$$
and that 
\begin{equation}
P_{0,\omega}\left[ H_{\partial B_{R+1}}<H_0^+\right] \ge \max_{y \in \partial B_{R+1}} P_{0,w}\left[H_y < H_0^+ \right]. 
\end{equation}
With the above in mind, we have 
\begin{equation}
\begin{split}
P_0\left(  N_{B_R}(0) > n \right) &\le  \E\left[ P_{o,\omega}\left(  N_{B_R}(0) > n \right); A^c_n(R+1) \right] + \frac{C}{n^{a+\delta}} \\
&= \E\left[ \left(1-P_{0,\omega}\left[ H_{\partial B_{R+1}}<H_0^+\right]\right)^n; A^c_n(R+1) \right] + \frac{C}{n^{a+\delta}} \\
& \le \left(1- n^{-\frac{a+\delta}{a+\varepsilon}}\right)^n + \frac{C}{n^{a+\delta}} \\
& \le \exp\Lbrace - n^{(\varepsilon - \delta)/(a+\varepsilon)}\Rbrace + \frac{C}{n^{a+\delta}}\le \frac{C+1}{n^{a+\delta}}. 
\end{split}
\end{equation}
Using the i.i.d nature of the environment and the above, we conclude that for any fixed~$R$, vertex $x\in B_R$ and $\delta'<\delta$
\begin{equation}
E_0\left(N_{B_R}^{a+\delta'}(x)\right) <\infty. 
\end{equation} 
Finally, using $T_{B_R} = \sum_{x \in B_R} N_{B_R}(x)$ and the fact that $B_R$ has finite volume we conclude that 
$$
E_0\left(T_{B_R}^{a+\delta'}\right) <\infty, 
$$
for any $R$. Thus, under conditions $(E)_0$ and $(X)_1$ we have condition $\BR{1}{\eta_*}$. Whereas, under $(E)_0$ and $(X)_2$, condition $\BR{2}{\eta_*}$ is satisfied. This is enough to prove Theorems \ref{thm:X} and \ref{thm:cltX2}. 

\end{proof}

\section{Condition $(B)_1^{\eta_*}$: sharpness and comparison with previous condition}\label{sec:props}
In this Section we formalize the discussion made at the Introduction about optimality of condition $(B)_1^{\eta_*}$ for ballisticity. More specifically we prove Proposition \ref{prop:zerospeed}, which states that a transient walk such that $E_0[T_{B_R}] = \infty$ for some $R$ has zero speed. Recalling our discussion about condition $(B)^{\eta_*}_1$, which becomes $E_0[T_{B_R}^{1+\varepsilon}] <\infty$ for small $\varepsilon$ and large $R$, we see that $E_0[T_{B_R}] = \infty$ is essentially the complement of $(B)_1^{\eta_*}$. In this direction, Proposition \ref{prop:zerospeed} and Theorem \ref{thm:BR} implies sharpness of condition  $(B)_1^{\eta_*}$.

In this Section we also prove that condition $(B)_1^{\eta_*}$ is implied by condition $(K)_1$, proposed in \cite{FK16}, which is the most general condition prior to this work. We formalize this implication in the following
\begin{proposition}\label{prop.KBR} Consider a RWRE in an environment satisfying condition~$(K)_{a}$ in \cite{FK16}. Then, for any $b>0$, $\BR{a}{b}$ is also satisfied.
\end{proposition}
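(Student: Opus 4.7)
The plan is to unpack condition $(K)_a$ from \cite{FK16} and read off the moment bound on $T_{B_R}$ that it encodes. Recall that $(K)_a$ is an integrability condition on a family of products of jump probabilities at a single site, tied to exponents $\beta(e)$ satisfying certain combinatorial inequalities; in \cite{FK16} it is used precisely because it implies a quantitative attainability estimate, and along the way it produces a moment bound on the exit time from boxes. Concretely, the first step I would take is to extract (or reproduce, following the arguments in \cite{FK16}) the implication that under $(K)_a$ there exists $c>0$ such that, for every sufficiently large radius $R$,
$$
E_0\!\left[T_{B_R}^{a+c}\right]<\infty.
$$
The key point here is that $c$ can be chosen uniformly in $R$: the moment bound is derived from a local estimate at the single-site distribution together with an exploration/flow construction of the same flavor as the one used in the proof of Theorem~\ref{thm:X}, and the exponent $a+c$ comes from the strict inequalities built into $(K)_a$, not from the size of the box.

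Once this moment bound is in hand, the second step is essentially bookkeeping. Fix any $b>0$. Since $c$ does not depend on $R$, the quantity $\frac{a(a+c)}{b\cdot c}-2$ is a fixed real number, and we may pick $R$ sufficiently large so that simultaneously
$$
E_0\!\left[T_{B_R}^{a+c}\right]<\infty \qquad\text{and}\qquad R>\frac{a(a+c)}{b\cdot c}-2.
$$
This is exactly the definition of $\BR{a}{b}$, so the proposition follows.

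The main obstacle is the first step: making precise the claim that $(K)_a$ yields $E_0[T_{B_R}^{a+c}]<\infty$ for arbitrarily large $R$. Since $T_{B_R}$ is monotone nondecreasing in $R$, finiteness for one small $R_0$ does \emph{not} automatically give finiteness for larger $R$, and this is the delicate point. My approach would be to mimic the decomposition $T_{B_R}=\sum_{x\in B_R}N_{B_R}(x)$, bound each $N_{B_R}(x)$ by a geometric random variable with parameter $P_{x,\omega}(T_{B_R}<H_x^+)$, and then control the tails of these geometric parameters using the single-site integrability encoded in $(K)_a$ together with a flow/exit-strategy argument in $B_R$ analogous to the one developed in Sections~\ref{sss2}--\ref{sss3}. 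The combinatorial inequalities on the exponents in $(K)_a$ are precisely what is needed to guarantee, via Max-Flow Min-Cut, that the resulting capacities sum to at least $a+c$ independently of $R$; this gives a bound of the form $\mathbb P(\max_y P_{0,\omega}(H_y<H_0^+)\le u^{-1})\le Cu^{-(a+c)}$ and from there the required moment on $T_{B_R}$ follows exactly as in the final step of the proof of Theorem~\ref{thm:X}.
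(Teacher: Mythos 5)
Your overall strategy is the same as the paper's: establish $E_0[T_{B_R}^{a+c}]<\infty$ with an exponent gap $c>0$ that is uniform in $R$, then use this freedom to choose $R$ large enough to satisfy the second inequality in the definition of $\BR{a}{b}$. You also correctly flag the delicate point --- monotonicity of $T_{B_R}$ in $R$ means finiteness for one $R$ gives nothing for larger $R$, so the uniformity of $c$ is exactly what matters. Where you diverge is in how you obtain the moment bound: you propose to \emph{re-derive} an attainability-type estimate under $(K)_a$ via a flow/exit-strategy argument ``analogous to'' Sections~\ref{sss2}--\ref{sss3}, whereas the paper simply cites Corollary~5.1 of \cite{FK16}, which directly gives a bound of the form
\[
\P\Bigl( \max_{y \in \partial B_{\delta\log u}} P_{0,\omega}[H_y < H_0^+ ] \le u^{-\frac{a+2\delta'}{a+\varepsilon}}\Bigr) \le u^{-(a+\delta')},
\]
for some $\varepsilon>0$ and any $\delta'$. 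From this, the paper runs exactly the decomposition $T_{B_R}=\sum_{x\in B_R}N_{B_R}(x)$ and the geometric-tail estimate you describe, with $c=\varepsilon/4$ and $\delta'=\varepsilon/3$, to get the uniform moment bound.

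The gap in your version is that the flow machinery of Sections~\ref{sss2}--\ref{sss3} is purpose-built for condition $(X)_a$ in dimension $d=2$ (exploration processes tied to edges, wedges and squares), and nothing in the paper shows that it transfers to condition $(K)_a$ in general $d\ge 2$. Your claim that ``the combinatorial inequalities on the exponents in $(K)_a$ are precisely what is needed to guarantee, via Max-Flow Min-Cut, that the resulting capacities sum to at least $a+c$'' may well be true in spirit --- $(K)_a$ is itself a flow-type condition --- but it is exactly the content of the work done in \cite{FK16}, and you would essentially be re-proving their theorem rather than invoking it. This is circuitous and leaves a real hole unless you carry out the adaptation in detail. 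The cleaner route, and the one the paper takes, is to import the attainability estimate as a black box (Corollary~5.1 of \cite{FK16}); the rest of the argument then matches yours step for step.
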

\begin{proof}By Corollary 5.1 of \cite{FK16}, condition $(K)_{a}$ implies the existence of a positive $\varepsilon$ with the following property:  for all $\delta'$ there exists $\delta$ such that
	\begin{equation}
	\P\left( \max_{y \in \partial B_{\delta\log u}} P_{0,w}\left[H_y < H_x^+ \right] \le u^{-\frac{a+2\delta'}{a+\varepsilon}}\right) \le \frac{1}{u^{a+\delta'}}.
	\end{equation}
	Now, choose $c=\varepsilon/4$, $\delta' = \varepsilon/3$, fix $R$ larger than $ a(a+c)/bc -2$ and take $u$ large enough so $\delta \log u > 2R$ and let $A_n$ be the following event
	\[
	A_n := \Lbrace \max_{y \in \partial B_{\delta\log n}} P_{x,w}\left[H_y < H_x^+ \right] \le n^{-\frac{a+2\delta'}{a+\varepsilon}} \Rbrace.
	\]
	Then, recalling that $T_{B_R}$ may be written as
	$$
	T_{B_R} = \sum_{x \in B_R} N_{B_R}(x),
	$$
	where $N_{B_R}(x)$ stands for the number of visits to $x$ before exit $B_R$, and writing 
  $$
  \tilde{Q}_x^{B_R} := P_{x, \omega}\left[ T_{B_R} < H_x^+\right]
  $$
  we have that 
	\begin{equation}\label{ineq:boundnbrx}
	\begin{split}
	P_0\left(  N_{B_R}(x) > n \right) &\le  \E\left[ P_{0,\omega}\left(  N_{B_R}(x) > n \right); A^c_n \right] + \frac{1}{n^{a+\delta'}} \\
	&= \E\left[ P_{0,\omega}\left( H_x < T_{B_R} \right)\left(1-\tilde{Q}_x^{B_R}\right)^n; A^c_n \right] + \frac{1}{n^{a+\delta'}} \\
	& \le \left(1- n^{-\frac{a+2\delta'}{a+\varepsilon}}\right)^n + \frac{1}{n^{a+\delta'}} \\
	& \le \exp\Lbrace - n^{\varepsilon/(a+\varepsilon)}\Rbrace + \frac{1}{n^{a+\delta'}}\le \frac{2}{n^{a+\varepsilon/3}},
	\end{split}
	\end{equation} 
	for large enough $n$. Since 
	$$\{T_{B_R} > n \} \subset \{ \exists x \in B_R, \; N_{B_R}(x) > n/(2R)^d \},
	$$
	using estimate \eqref{ineq:boundnbrx} and union bound (which is possible since $R$ is fixed) we may conclude that $E_0T_{B_R}^{a+c} < \infty$.
\end{proof}
Now we prove Proposition \ref{prop:zerospeed}
\begin{proof}[Proof Proposition \ref{prop:zerospeed}] Since $T_{B_R}$ is increasing in $R$,  we may assume $R$ is such that~$E_0 T_{B_{R-1}} < \infty$ whereas~$E_0 T_{B_R} =\infty$ (where  $B_0$ denotes the singleton $\{0\}$). Moreover,  there exist a point $x_0 \in \partial B_{R-1}$ such that $E_x T_{B_R} = \infty$. Otherwise,  by the Strong Markov Property
	\begin{equation}
	\begin{split}
	E_0 T_{B_R} & = E_0 \left( T_{B_{R-1}} + T_{B_R} - T_{B_{R-1}}\right) \\
	& = E_0 T_{B_{R-1}} + \E \left(\sum_{x \in \partial B_{R-1}} E_{0,\omega } \left[( T_{B_R} - T_{B_{R-1}})\mathbb{1}\left \lbrace X_{T_{B_{R-1}}} = x\right \rbrace \right]\right) \\
	&=E_0 T_{B_{R-1}} + \sum_{x \in \partial B_{R-1}} \E \left(E_{x,\omega } \left[ T_{B_R}\right]P_{0, \omega} \left[X_{T_{B_{R-1}}} = x \right]\right). \\
	&\le E_0 T_{B_{R-1}} + \sum_{x \in \partial B_{R-1}} E_{x } T_{B_R}.
	\end{split}
	\end{equation}
	The next step is to prove the following \textit{claim}: there exist $c$ such that, 
	\begin{equation}
	P_0\left( \tau_{1} > u \; \middle | \; D=\infty \right) \ge c P_0\left( T_{B_R} > u\right).
	\end{equation}
	In order to prove the above claim, we will follow the proof of Lemma 6.1 in \cite{FK16} doing the necessary adaptations to our case. Throughout the remainder of the proof, Figure \ref{fig:trap} will guide our arguments. 
	\begin{figure}[h]
		\centering
		\includegraphics[width=0.7\linewidth]{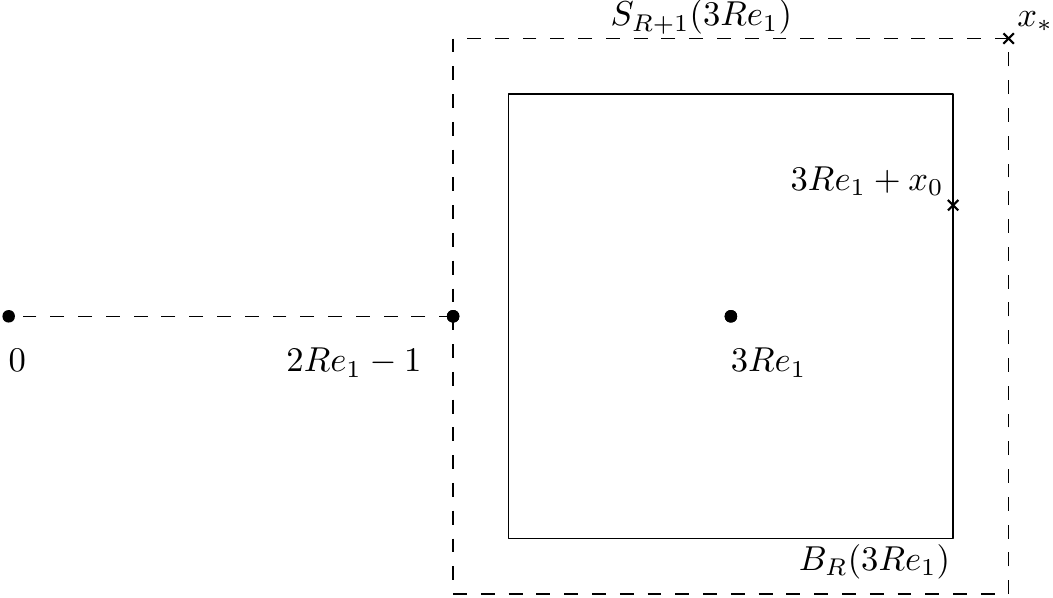}
		\caption{Regular points surrounding a potential trap $B_R(3Re_1)$.}
		\label{fig:trap}
	\end{figure}
	Before we start, we will need an additional definition. We say $x \in \Z^d$ is~$\kappa$-\textit{regular} (or simply, \textit{regular}) if $\omega(x,y) > \kappa$, for all $y \sim x$. Since we have an i.i.d elliptic environment, there exist $\kappa_0$ such that 
	\[
	\P(x \text{ is }\kappa_0\text{-regular}) >\frac{1}{2}.
	\]
	Put $\mathcal{L}$ as the line-segment $<0, e_1, \dots, (2R-1)e_1>$ and let $\mathcal{K}$ be the following event
	\begin{equation}
	\mathcal{K} := \lbrace \mathcal{L}\text{ and } S_{R+1}(3Re_1) \text{ are regular}\rbrace.
	\end{equation}
	In Figure \ref{fig:trap}, the dashed lines represent sets of regular points.
	Now, we will define several events, whose intersection will offer a lower bound the probability of $\{\tau_{1} > u, D=\infty\}$. We start by $A_1$, whose definition is self-explanatory
	\begin{equation*}
	A_1 := \lbrace X_1 = e_1, X_2 = 2e_1, \dots, X_{2R-1} = (2R-1)e_1 \rbrace;
	\end{equation*}
	Also let $A_2$ be
	\begin{equation*}
	A_2 := \lbrace X \text{ goes from }(2R-1)e_1 \text{ to }3Re_1 +x_0 \text{ using the shortest path in }S_{R+1}(3Re_1) \rbrace.
	\end{equation*}
	In more details, $A_2$ is the event in which $X$ goes from $(2R-1)e_1$ to $3Re_1 +x_0$ walking on ~$S_{R+1}(3Re_1)$ and taking the shortest path on the surface of $B_{R+1}(3Re_1)$ before jumping to $3Re_1 +x_0$, which belongs to $B_R(3Re_1)$. The next event is
	\begin{equation*}
	A_3 := \lbrace T_{B_R(3Re_1)}\circ \theta_{H_{3Re_1+x_0}} > u \rbrace.
	\end{equation*}
	I.e., after reaching the point $3Re_1+x_0 \in S_R(3R)$, the walk takes more than $u$ steps to exit $B_R(3Re_1)$.
	\begin{equation*}
	A_4 := \lbrace X \text{ goes from }X_{T_{B_R(3Re_1)}} \text{ to }2Re_1-1  \text{ using the shortest path in }S_{R+1}(3Re_1) \rbrace.
	\end{equation*}
	Once $X$ has left the smaller box $B_R(3Re_1)$ it is in the surface $S_{R+1}(3Re_1)$. Then, walking only on $S_{R+1}(3Re_1)$ and through the shortest path, the walk lands on $2Re_1 -1$.
	\begin{equation*}
	A_5 := \lbrace X_{H_{2Re_1 - 1}+1} = 2Re_1 -2, \dots,  X_{H_{2Re_1 - 1}+2R -1} = e_1 \rbrace.
	\end{equation*}
	In words, after visiting $2Re_1 -1$, the walk goes straight to $e_1$ walking on $\mathcal{L}$.. This return to $e_1$ is crucial, since it will guarantee that the regeneration does not occur before $T_{B_R(3Re_1)}$. In order to simplify the next definitions, we will write $$x_* := 3Re_1+(R+1)\sum_{i=1}^de_i.$$ The next event has a self-explanatory definition 
	\begin{equation*}
	A_6 := \lbrace X \text{ takes the shortest path from }e_1\text{ to }x_*\text{ in }\mathcal{L}\cup S_{R+1}(3Re_1) \rbrace.
	\end{equation*}
	Finally, we have our last event, 
	\begin{equation*}
	A_7 := \lbrace \text{ From }x_*, X \text{ jumps to }x_*+e_1 \text{ and never backtracks} \rbrace.
	\end{equation*}
	In other words, after reaching $x_*$, the walk takes one step at direction $e_1$ and then creates a regeneration time.
	
	Our first and crucial observation regarding the chain of events above defined is the following inclusion
	\begin{equation}
	\lbrace \tau_{1} > u, D = \infty \rbrace \supset \bigcap_{m=1}^7A_m.
	\end{equation}\label{set:tautailinclusion}
	We will conclude the proof estimating from below the probability of $A_1 \cap \dots \cap A_7$, which we do by conditioning and after several application of the Markov Property. We start from $A_7$, on $\mathcal{K}$, we have that
	\begin{equation}
	P_{0, \omega}\left[ A_7 \middle | A_6, \dots, A_1 \right] = 	P_{x_*, \omega}\left[X_1 = e_1, D \circ \theta_1 = \infty  \right] \ge  \kappa_0 P_{x_*+e_1, \omega}\left[D = \infty \right].
	\end{equation}
	Again by Markov Property, for $A_6$, on $\mathcal{K}$, we have that there exist a constant $c_6 = c(R,d,x_*)$ such that
	\begin{equation}
	\Pq{0}\left[ A_6 \middle | A_5, \dots, A_1 \right] \ge \kappa_0^{c_6},
	\end{equation}
	since the shortest path from $e_1$ to $x_*$ in $\mathcal{L} \cup S_{R+1}(3Re_1)$ is deterministic and all points of such path is regular on $\mathcal{K}$. Arguing the same way, we also conclude that there exist positive constants $c_5 = c(R)$ and $c_4 = c(R,d)$, such that
	\begin{equation}
	\Pq{0}\left[ A_5 \middle | A_4, \dots, A_1 \right] \ge \kappa_0^{c_5}; \quad \Pq{0}\left[ A_4 \middle | A_3, A_2, A_1 \right] \ge \kappa_0^{c_4}.
	\end{equation}
	Again by the Markov property,
	\begin{equation}
	\Pq{0}\left[ A_3 \middle | A_2, A_1 \right] = \Pq{3Re_1 +x_0 }\left[ T_{B_R(3Re_1)} > u\right].
	\end{equation}
	Again, arguing as for $A_6$, on $\mathcal{K}$,  there exists $c_2 = c(R,d, x_0)$,  such that
	\begin{equation}
	\Pq{0}\left[ A_2 \middle |  A_1 \right] \ge \kappa_0^{c_2}; \quad \Pq{0}\left[ A_1\right] \ge \kappa_0^{2R-1}.
	\end{equation}
	Putting all the above lower bounds together, we have that there exist positive constants $c_8 = c(R,d,x_0,x_*)$ and $c_9 = c(R,d,x_0,x_*)$  such that
	\begin{equation}
	\begin{split}
	P_0 \left(\tau_{1} > u, D= \infty\right) & \ge \kappa_0^{c_8}\E \left(\mathbb{1}_{\mathcal{K}}\Pq{3Re_1 +x_0 }\left[ T_{B_R(3Re_1)} > u\right]P_{x_*+e_1, \omega}\left[D = \infty \right]\right) \\
	& \ge \kappa_0^{c_9} P_{x_0} \left( T_{B_R} > u \right) P_0 \left( D = \infty \right),
	\end{split}
	\end{equation}
	since the random variables $\mathbb{1}_\mathcal{K}$, $\Pq{3Re_1 +x_0 }\left[ T_{B_R(3Re_1)} > u\right]$ and $P_{x_*+e_1, \omega}\left[D = \infty \right]$ are all $\P$-independent due to the independent nature of our environment. The above inequality implies that 
	$$\E\left( \tau_1 \middle | D = \infty\right) = \infty,$$
	which together with the hypothesis the walk is transient in the direction $\ell$ concludes the proof.
\end{proof}

\section*{Acknowledgments}
Alejandro Ram\'\i rez and Rodrigo Ribeiro were supported by Iniciativa
Cient\'\i fica Milenio of the Ministry of Science and Technology (Chile).
Alejandro Ram\'\i rez has been also supported by Fondecyt 1180259. The authors thank Daniel Kious for his valuable comments in the first version of this paper.
\medskip

\end{document}